\documentclass{mystyle}
\usepackage[utf8]{inputenc}
\usepackage{amsmath}
\usepackage{calc}
\usepackage[dvipsnames]{xcolor}
\usepackage{mathtools}
\usepackage{accents}
\usepackage[all]{xy}   
\usepackage{tikz-cd}
\usepackage{enumitem}
\tikzcdset{arrow style=tikz, diagrams={>=stealth}}                        %
  
\CompileMatrices                            

\UseTips                                    

\input xypic
\usepackage[bookmarks=true]{hyperref}       

\usepackage{amssymb,latexsym,amsmath,amscd}
\usepackage{xspace}
\usepackage{color}
\usepackage{graphicx}
\usepackage{dsfont}
\usepackage{cancel}

\reversemarginpar

\vfuzz2pt 
\hfuzz2pt 


\theoremstyle{plain}
\newtheorem{theorem}{Theorem}[section]
\newtheorem*{theorem*}{Theorem}
\newtheorem{proposition}[theorem]{Proposition}

\newtheorem{lemma}[theorem]{Lemma}

\theoremstyle{definition}

\newtheorem{remark}[theorem]{Remark}

\newcommand{\enm}[1]{\ensuremath{#1}}          %
\newcommand{\op}[1]{\operatorname{#1}}
\newcommand{\cal}[1]{\mathcal{#1}}

\renewcommand{\bar}[1]{\overline{#1}}

\newcommand{\NN}{\enm{\mathbb{N}}}

\newcommand{\ZZ}{\enm{\mathbb{Z}}}
\newcommand{\FF}{\enm{\mathbb{F}}}

\newcommand{\PP}{\enm{\mathbb{P}}}

\newcommand{\KK}{\enm{\mathbb{K}}}

\newcommand{\Bb}{\enm{\cal{B}}}

\newcommand{\Dd}{\enm{\cal{D}}}
\newcommand{\Ee}{\enm{\cal{E}}}
\newcommand{\Ff}{\enm{\cal{F}}}
\newcommand{\Gg}{\enm{\cal{G}}}
\newcommand{\Hh}{\enm{\cal{H}}}
\newcommand{\Ii}{\enm{\cal{I}}}

\newcommand{\Ll}{\enm{\cal{L}}}
\newcommand{\Mm}{\enm{\cal{M}}}
\newcommand{\Nn}{\enm{\cal{N}}}
\newcommand{\Oo}{\enm{\cal{O}}}

\newcommand{\Xx}{\enm{\cal{X}}}

\newcommand{\cgb}{Castelnuovo genus bound}

\newcommand{\bpf}{base-point-free }
\renewcommand{\phi}{\varphi}
\renewcommand{\theta}{\vartheta}
\renewcommand{\epsilon}{\varepsilon}

\newcommand{\vnii}{\vskip 2pt \noindent}


\newcommand{\Aut}{\op{Aut}}

\newcommand{\Jac}{\op{Jac}}

\newcommand{\tensor}{\otimes}         

      %

\renewcommand{\to}[1][]{\xrightarrow{\ #1\ }}







\newcommand{\old}[1]{}
\newcommand{\vni}{\vskip 2pt \noindent}


\newcommand{\w}{\widetilde}
\newcommand{\h}{\ensuremath{\mathcal}}
\newcommand{\HL}{\ensuremath{\mathcal{H}^\mathcal{L}_}}
\newcommand{\HO}{\ensuremath{\mathcal{H}^\mathcal{}_}}

\begin{document}
\title[Hilbert schemes and Hilbert functions of smooth curves in $\mathbb{P}^5$]
{Hilbert scheme and Hilbert functions of smooth curves of degrees at most $15$  in $\mathbb{P}^5$}

\thanks{The first named author is a member of GNSAGA of INdAM (Italy). 
} 
\author[E. Ballico]{Edoardo Ballico}
\address{Dipartimento di Matematica, Universita` degli Studi di Trento\\
Via Sommarive 14, 38123 Povo, Italy}
\email{ballico@science.unitn.it
}

\author[C. Keem]{Changho Keem}
\address{
Department of Mathematics,
Seoul National University\\
Seoul 151-742,  
South Korea}
\email{ckeem1@gmail.com \textrm{and} ckeem@snu.ac.kr}

\thanks{}

\subjclass{Primary 14C05, Secondary 14H10}

\keywords{Hilbert scheme, Algebraic curve, Hilbert function, Special linear series, Gonality}

\date{\today}
\maketitle
\vspace{-12pt}
\begin{abstract}
Denoting $\h{H}_{d,g,5}$ by the Hilbert scheme of smooth curves of degree $d$ and genus $g$ in $\PP^5$, let $\Hh$  be an irreducible component of $\HO{d,g,5}$.
We  study the 
Hilbert function $h_X:\NN\to \NN$,  $h_X(t):= h^0(\Ii_X(t))$ 
of a general member $X\in\Hh$, especially when the degree of the curve is low;  $d\le 15$.
We also determine the irreducibility of $\HO{d,g,5}$ for $d\le 14$ and study the natural functorial map $\mu :\HO{d,g,5} \to \Mm_g$ in some detail. We describe the fibre $\mu^{-1}\mu(X)$ for a general $X\in\Hh$ as well as determining the projective normality (or being ACM). 
\end{abstract}
\vspace{-12pt}
\section{Introduction}
We denote by $\h{H}_{d,g,5}$ the Hilbert scheme of smooth curves of degree $d$ and genus $g$ in $\PP^5$. Let $\Hh$  an irreducible component of $\HO{d,g,5}$.
In this paper, we  study the 
Hilbert function $h_X:\NN\to \NN$,  $h_X(t):= h^0(\Ii_X(t))$ 
for a general member $X\in\Hh$. We mainly focus our attention to curves of degree $d\le 15$.
We also determine the irreducibility of $\HO{d,g,5}$ for $d\le 14$ and study the moduli map $\mu :\HO{d,g,5} \to \Mm_g$, sending $X\in\HO{d,g,5}$ to its isomorphism class
$\mu (X)=[X]\in\h{M}_g$.
We describe the fibre $\mu^{-1}\mu(X)$ for a general $X\in\Hh$ as well as determining the projective normality (or being ACM). 

Regarding families of curves in $\PP^5$ with prescribed degree and genus $(d,g)$, less is known
compared with curves in lower dimensional projective spaces. 
In this paper we study $\HO{d,g,5}$ when the degree $d$ is rather low, namely $d\le 15$. Besides describing the Hilbert function of a general member in $\Hh$, we determine the irreducibility of $\HO{d,g,5}$ {(for $d\le 14$)}.
 We also study several key properties such as  gonality (and its first scrollar invariant) of a general element as well as characterizing smooth elements of each component. 
 
 Our particular choice of curves of degree $d\le 15$ in $\PP^5$ stems from the fact that -- while  $\HO{d,g,5}$ for $d=15$ (and for every possible genus $g$) has been studied and rather well understood to a certain extent by recent works of the authors \cite{edinburgh}, \cite{bumi24} --  however, for $d\le 14$, not so much attention has been given to $\HO{d,g,5}$ except for a couple of  exceptional cases, e.g. when $g$ is low with respect to $d$. 
Needless to say, as far as the author knows, only scattered results for the degree $d\ge16$ have been known, e.g.  in \cite{landscape}.

The organization of this paper is as follows. In the next section, we prepare some preliminaries relevant to our study. We also quote or prove several lemmas which we will be using later, such as smoothing nodal reducible curves in $\PP^5$ adopted for our specific situation. In section 3, we study the Hilbert function of curves of degree $d=15$, which have not been treated in 
\cite{bumi24} or \cite{edinburgh}. In subsequent sections, we study $\HO{d,g,5}, d\le 14$ in some detail, 
including the irreducibility and the description of a general curve.

\subsection{Notation and conventions}
For notation and conventions, we  follow those in \cite{ACGH} and \cite{ACGH2}; e.g. $\pi (d,r)$ is the maximal possible arithmetic genus of an irreducible,  non-degenerate and reduced curve of degree $d$ in $\PP^r$ which is usually referred as the first Castelnuovo genus bound. We shall refer to  non-degenerate, irreducible curves $X\subset \PP^r$ with the maximal possible genus $g=\pi (d,r)$ as {\it extremal curves}. $\pi_1(d,r)$ is the so-called the second Castelnouvo genus bound which is the maximal possible arithmetic genus of  an irreducible, non-degenerate and reduced curve of degree $d$ in $\PP^r$ not lying on a  surface of minimal degree $r-1$; cf. \cite[p. 99]{he}, \cite[p. 123]{ACGH}. We shall call curves $X\subset\PP^r$ of degree $d$ and (arithmetic) genus $g$ such that $\pi_1(d,r)<g<\pi(d,r)$ {\it nearly extremal curves}.

Following classical terminology, a linear series of degree $d$ and dimension $r$ on a smooth curve $C$ is denoted by $g^r_d$.
A base-point-free linear series $g^r_d$ ($r\ge 2$) on a smooth curve $C$ is called {\it birationally very ample} when the morphism 
$C \rightarrow \mathbb{P}^r$ induced by  the $g^r_d$ is generically one-to-one onto (or is birational to) its image curve.  A base-point-free linear series $g^r_d$ on $C$  is said to be compounded of an involution ({\it compounded} for short) if the morphism induced $g^r_d$ gives rise to a non-trivial covering map $C\rightarrow C'$ of degree $k\ge 2$. 

We  also recall the following standard set up and notation; cf. \cite{ACGH2}  or \cite[\S 1 and \S 2]{AC2}.
Let $\mathcal{M}_g$ be the coarse moduli space of smooth curves of genus $g$. Given an isomorphism class $[C] \in \mathcal{M}_g$ corresponding to a smooth irreducible curve $C$, there exist a neighborhood $U\subset \mathcal{M}_g$ of  $[C]$ and a smooth connected variety $\mathcal{M}$ which is a finite ramified covering $h:\mathcal{M} \to U$, as well as  varieties $\mathcal{C}$ and $\mathcal{G}^r_d$ proper over $\mathcal{M}$ with the following properties:
\begin{enumerate}
\item[(1)] $\xi:\mathcal{C}\to\mathcal{M}$ is a universal curve, i.e. for every $p\in \mathcal{M}$, $\xi^{-1}(p)$ is a smooth curve of genus $g$ whose isomorphism class is $h(p)$,
\item[(2)] $\mathcal{G}^r_d$ parametrizes the couples $(p, \mathcal{D})$, where $\mathcal{D}$ is possibly an incomplete linear series of degree $d$ and dimension $r$ on $\xi^{-1}(p)$.
\item[(3)] $\mathcal{W}^r_d$ parametrizes the couples $(p, \mathcal{D})$, where $\mathcal{D}$ is a complete linear series of degree $d$ and dimension at least $r$ on $\xi^{-1}(p)$.
\end{enumerate}
We set 
\begin{equation}
\lambda(d,g,r):= 3g-3+\rho (d,g,r),
\end{equation}
the minimal possible dimension of any component of $\Gg^r_d$ and 
\begin{equation}\label{md}
\h{X}(d,g,r):=3g-3+\rho (d,g,r)+\dim\Aut(\PP^r),
\end{equation}
which is the minimal possible dimension of any component of $\HO{d,g,r}$. 

Throughout we work over an algebraically closed field $\KK$ of characteristic zero.
\vspace{-12pt}
\section{Preliminaries and  relevant results}
\subsection{Curves on rational surfaces}

We often need to deal with curves on a quartic surface  in 
$S\subset\PP^5$. The following description of surfaces of minimal degree is well known, which we quote and remind readers just
for fixing notation.

\begin{remark}\label{cone1}
A quartic surface $S\subset\PP^5$  is one of the following types.
\begin{itemize}
\item[(0)] $S$ is a Veronese surface. For any smooth curve $X\subset S$, $\deg X=2s$ and $g=p_a(X)=\binom{s-1}{2}$.
\item[(1a)] $S\subset\PP^5$ is a cone over a 
rational normal curve $R\subset H\cong\PP^{4}$. Let $P\notin H$ be the vertex of the cone $S$.
Recall that $S$ is the image of morphism $\FF_{4}=\mathbb{P}(\h{O}_{\PP^1}\oplus\h{O}_{\PP^1}(4)) \xrightarrow{|C_0+4f|} S\subset\PP^5$  contracting $C_0$ to the vertex $P$; $C_0^2=-4, C_0\cdot f=1$, $f^2=0$ where $f$ is the fibre of $\FF_{4}\rightarrow\PP^1$. 
Let $C\subset S$ be a reduced and non-degenerate curve of degree $d$ and and let $\w{C}$ be the strict transformation of $C$ under  $\widetilde{S}\cong\mathbb{F}_{4}\to S$. 
~Setting $k=\w{C}\cdot f$ we have $\w{C}\equiv kC_0+df$ 
and
\begin{equation}\label{conevertex1}
0\le \w{C}\cdot C_0=(kC_0+df)\cdot C_0=d-4k=m
\end{equation} where $m$ is the multiplicity of $C$ at the vertex  $P$ of $S$.
By adjunction,
\begin{equation}\label{cone}
p_a(C)=1/2\, \left( k-1 \right)  \left( 2\,d-4k-2 \right).
\end{equation}
\item[(1b)]
For $11\le d\le 15$, there are only two possibilities  satisfying \eqref{conevertex1} and \eqref{cone} with $m\in\{0,1\}$;
$(d,k,m,g)\in\{(12,3,0,10), (13,3,1,12)\}$.  Thus a smooth curve in $\PP^5$ of degree $d$ with $11\le d\le 15$ lying on a rational normal cone is {\it trigonal} having genus $g=10$ or $g=12$.
\item[(2a)] $S$ is a smooth surface scroll, $\mathrm{Pic}(S)=\mathbb{Z}H\oplus\mathbb{Z}{L}$
where $H$ (resp. $L$) is the class of a hyperplane section (resp. the class a line of the ruling).
Let $X\subset S$ be a curve and assume$X\in|aH+bL|$:
\begin{equation}\label{sd}
\deg X=(aH+bL)\cdot H=4a+b
\end{equation}
\begin{equation}\label{sg}
p_a(X)=2{(a-1)(a-2)}+(3+b)(a-1)
\end{equation}
\begin{equation}\label{sdn}
 \dim|aH+bL|=2a \left(a +1\right)+\left(a +1\right) \left(b +1\right)-1
 \end{equation}
 \item[(2b)] 
 In this case, we have another set up such as
 either 
 $$S\cong\FF_0\cong\PP(\Oo(2)\oplus\Oo(2))\cong\PP^1\times\PP^1=Q\lhook\joinrel\xrightarrow{|\Oo_Q(1,2)|}\PP^5$$
 or 
 $$S\cong\FF_2\cong\PP(\Oo(3)\oplus\Oo(1))\lhook\joinrel\xrightarrow{|\Oo_{\FF_2}(h+3f)|}\PP^5.$$  
 If $X\in|\Oo_Q(a,b)|$, 
 \begin{equation}\label{F0} d=\deg X=2a+b, \, g=(a-1)(b-1).
 \end{equation}
 If $X\in|\Oo_{\FF_2}(ah+bf)|$, 
 \begin{equation}\label{F2}d=\deg X=a+b, \,g=(a-1)(b-a-1).
 \end{equation}
\item[(2c)] For $(d,g)\in\{(15,15),(15,14), (13,11), (13,10)\}$, there is no integer pair $(a,b)\in\NN\times\NN$ satisfying \eqref{F0} or \eqref{F2}. 
\end{itemize}
\end{remark}
\begin{remark} \label{bih}Let $X\stackrel{\eta}{\rightarrow} E$ be a double covering of a curve $E$ of genus $h\ge 1$. Let $\h{E}=g^s_{e}$ be a {\it non-special} linear series on $E$. Assume that $|\eta^*(g^s_{e})|=g^s_{2e}$. Then the base-point-free part of the complete  
$|K_X(-\eta^*(g^s_{e}))|$ is compounded; the proof is straightforward which we omit.
\end{remark}
The following inequality - known as Castelnuovo-Severi inequality - shall be used occasionally even without explicit mention; cf. \cite[Theorem 3.5]{Accola1}.
\begin{remark}[Castelnuovo-Severi inequality]\label{CS} Let $C$ be a curve of genus $g$ which admits coverings onto curves $E_h$ and $E_q$ with respective genera $h$ and $q$ of degrees $m$ and $n$ such that these two coverings admit no common non-trivial factorization; if $m$ and $n$ are primes this will always be the case. Then
$$g\le mh+nq+(m-1)(n-1).$$ 
\end{remark}

\subsection{Some remarks on scrollar invariats}
When we deal with a curve $X$ on Hirzebruch surfaces $\FF_0, \FF_2\subset\PP^5$ embedded by appropriate linear systems as in Remark \ref{cone1}\,(d),  we utilize the following fact which enables us to deduce the uniqueness of a very ample and complete $|\Oo_X(1)|$; cf. \cite[Proposition 2.3]{edinburgh}.
\begin{proposition}\label{z5}
Let $Q\subset\PP^3$ be a smooth quadric surface. 
Fix integers $3\le a<b\le 2a-1$ and  a smooth $X\in |\Oo_Q(a,b)|$. Then $X$ is $a$-gonal with a unique $g^1_a$, no base-point-free $g^1_c$ for $a<c<b$ and a unique base-point-free $g^1_b$, the pencil induced by $|\Oo_Q(1,0)|$.
\end{proposition}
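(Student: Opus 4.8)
The plan is to read off the two pencils from the rulings of $Q$ and then to control every other base-point-free pencil by a single Castelnuovo--Severi computation. Let $p_1,p_2\colon Q=\PP^1\times\PP^1\to\PP^1$ be the projections and put $\pi_i:=p_i|_X$; then $A:=\Oo_Q(0,1)|_X$ and $B:=\Oo_Q(1,0)|_X$ are base-point-free pencils with $\deg\pi_2=\deg A=a$ and $\deg\pi_1=\deg B=b$. By \eqref{F0} one has $g=(a-1)(b-1)$, and adjunction gives $K_X=A^{\,b-2}\otimes B^{\,a-2}$. Since $(\pi_1,\pi_2)$ is the closed embedding $X\hookrightarrow Q$, the pencils $A,B$ admit no common non-trivial factorization, so Remark~\ref{CS} applies to either of them together with an arbitrary pencil (for two base-point-free pencils of degrees $m,n$ with no common factorization it reads $g\le(m-1)(n-1)$).

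The uniform device is as follows. Given a base-point-free $g^1_c$ with morphism $\phi_L\colon X\to\PP^1$ defined by a line bundle $L$, form $\psi:=(\pi_2,\phi_L)\colon X\to\PP^1\times\PP^1$, let $X'=\psi(X)$ and $\delta=\deg(X\to X')$; then $\delta\mid a$, $\delta\mid c$ and $p_a(X')=(a/\delta-1)(c/\delta-1)$. If $\delta=1$ then $g\le p_a(X')=(a-1)(c-1)$, forcing $c\ge b$. If $\delta\ge2$ then $\pi_2$ factors through $X\to X'$, so $\pi_1$ cannot (otherwise $\pi_1,\pi_2$ would share a factorization), and Remark~\ref{CS} applied to $X\to X'$ and $\pi_1$ gives $g\le\delta\,p_a(X')+(\delta-1)(b-1)$, that is $(a-\delta)(b-1)\le(a-\delta)(c-\delta)/\delta$. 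For $2\le\delta<a$ this yields $b\le c/\delta\le c/2\le b/2$, absurd; and $\delta=a$ forces $a\mid c$, hence $c\ge2a$ as soon as $c>a$, contradicting $b\le2a-1$ (while if $c=a$ the case $\delta=a$ makes $X'$ rational and collapses $L$ to $A$). Running this over the ranges $c<a$, $c=a$ and $a<c<b$ proves in one stroke that $\op{gon}(X)=a$, that the $g^1_a$ is unique, and that there is no base-point-free pencil of degree strictly between $a$ and $b$.

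Finally, for $c=b$ only $\delta=1$ survives, so $X\cong X'$ is smooth of class $\Oo_{Q'}(b,a)$ (the equality $p_a(X')=g$ forcing smoothness), with $\psi^*\Oo_{Q'}(1,0)=A$ and $\psi^*\Oo_{Q'}(0,1)=L$; adjunction on $X'$ then gives $K_X=A^{\,b-2}\otimes L^{\,a-2}$. Comparing with $K_X=A^{\,b-2}\otimes B^{\,a-2}$ yields $B^{\,a-2}=L^{\,a-2}=:N$, which a priori only shows $B\otimes L^{-1}$ is $(a-2)$-torsion; removing this torsion ambiguity is the crux. I would do so by computing on the original model that $N=\Oo_Q(a-2,0)|_X$ has $h^0(N)=a-1$, so that $|N|$ equals $\pi_1^*|\Oo_{\PP^1}(a-2)|$; since $a\ge3$, the morphism $\PP^1\to\PP^{a-2}$ attached to $|\Oo_{\PP^1}(a-2)|$ is an embedded rational normal curve, whence $\phi_{|N|}$ and $\pi_1$ have exactly the same fibres. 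The identity $N=L^{\,a-2}$ likewise forces $|N|=\phi_L^*|\Oo_{\PP^1}(a-2)|$, so $\phi_{|N|}$ and $\phi_L$ share their fibres as well; therefore $\phi_L$ and $\pi_1$ define the same pencil and $L=B=\Oo_Q(1,0)|_X$. The main obstacle is precisely this last point: Castelnuovo--Severi alone determines the $g^1_b$ only up to torsion, and it is the compounded structure of $|K_X\otimes A^{-(b-2)}|$---available exactly because $a\ge3$---that is needed to conclude uniqueness on the nose.
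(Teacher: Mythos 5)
Your proposal is correct, but note that there is nothing in this paper to compare it against: Proposition~\ref{z5} is stated here without proof, being quoted verbatim from \cite[Proposition 2.3]{edinburgh}. So your argument stands as a self-contained proof, and it holds up under scrutiny. The device $\psi=(\pi_2,\phi_L)$ is sound: $\delta\mid a$, $\delta\mid c$, and $p_a(X')=(a/\delta-1)(c/\delta-1)$ by adjunction on the target quadric (valid even for singular $X'$, and $g(\widetilde{X'})\le p_a(X')$ keeps the Castelnuovo--Severi estimate in the right direction). One point you should spell out: Remark~\ref{CS} requires that $X\to X'$ and $\pi_1$ admit \emph{no common non-trivial factorization}, which is stronger than your stated ``$\pi_1$ does not factor through $X\to X'$''; but your parenthetical reason does prove the stronger statement, since any common intermediate $X\to Y$ of degree $\ge 2$ would be a common factorization of $\pi_1$ and $\pi_2$, contradicting that $(\pi_1,\pi_2)$ is the embedding $X\hookrightarrow Q$. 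The numerical case analysis over $\delta=1$, $2\le\delta<a$, $\delta=a$ is right, and the hypothesis $b\le 2a-1$ enters exactly where it must (killing $\delta=a$ when $c>a$; indeed uniqueness of the $g^1_b$ fails for $b\ge 2a$). Most importantly, you correctly identify the genuine crux that a naive genus/adjunction comparison misses: the case $c=b$, $\delta=1$ only yields $(B\otimes L^{-1})^{\otimes(a-2)}\cong\Oo_X$, and your removal of the torsion ambiguity is valid --- $h^0(X,B^{a-2})=a-1$ follows from $0\to\Oo_Q(-2,-b)\to\Oo_Q(a-2,0)\to B^{a-2}\to 0$ with $h^0=h^1=0$ for the kernel, so $|B^{a-2}|=\pi_1^*|\Oo_{\PP^1}(a-2)|$, while $L^{a-2}\cong B^{a-2}$ forces $|B^{a-2}|=\phi_L^*|\Oo_{\PP^1}(a-2)|$ by dimension count; as both factorizations of $\phi_{|B^{a-2}|}$ pass through embeddings of $\PP^1$ as a rational normal curve, $\phi_L$ and $\pi_1$ differ by an automorphism of $\PP^1$, whence $L\cong B$. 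Two cosmetic additions would make it airtight: record that any $g^1_c$ computing the gonality, and any $g^1_a$ whatsoever, can be assumed base-point-free (strip base points), so that your analysis of base-point-free pencils really does give ``$a$-gonal with a unique $g^1_a$''; and in the $c=b$, $\delta=1$ case state explicitly that $p_a(X')=g=g(\widetilde{X'})$ forces $X'$ smooth and $\psi$ an isomorphism, which you do implicitly.
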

\begin{remark}  Let $X$ be a smooth curve with a base-point-free pencil $g^1_a, a\ge 3$.
The first (or the last if one prefers) scrollar invariant of $X$ with respect to $g^1_a$
is the integer 
$$m(X,g^1_a):=\max\{k| \dim|kg^1_a|=k\}-1.$$
When $a=3$ and $g\ge5$, $m(X,g^1_a)$ is classically known as the Maroni invariant of a trigonal curve.
When $g^1_a$ is the unique pencil computing the gonality of $X$, we use the notation $m(X)$ instead of $m(X,g^1_a)$.
\end{remark}
\begin{lemma}\label{maroni}Let $X\subset \PP^5$ be a smooth curve of degree $d$ and genus $g$ lying on a smooth surface scroll.

\begin{itemize} 
\item[\rm{(i)}] Assume $X\in|\Oo_Q(a,b)|$, $3\le a\le b$. Let $g^1_a=|\Oo_Q(0,1)|_{|X}$. We have 
$\,m(X,g^1_a)=b-2$.
\item[\rm{(ii)}]
Assume $X\in|\Oo_{\FF_2}(ah+bf)|$ and $6\le 2a\le b$. Let 
$g^1_a=|\Oo_{{\FF}_2}(f)|_{|X}$
be the pencil cut out by the fiber $\FF_2\to\PP^1$. We have $\,m(X,g^1_a)=b-2a$.
\end{itemize}
\end{lemma}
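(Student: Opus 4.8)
The plan is to compute $\dim|k\,g^1_a|$ for every $k\ge 0$ directly on the scroll, read off the threshold $\max\{k\mid\dim|k\,g^1_a|=k\}$, and subtract $1$. In both cases the pencil is cut out by the ruling class $f$, so $k\,g^1_a$ is the complete series $|\Oo_X(kf)|$ (that the restricted $g^1_a$ is already complete drops out of the computation at $k=1$). Writing $S$ for the scroll and $\pi:S\to\PP^1$ for the ruling, I would tensor the structure sequence of $X\subset S$ by $\Oo_S(kf)$ to obtain
\[
0\to\Oo_S(kf-X)\to\Oo_S(kf)\to\Oo_X(kf)\to 0 .
\]
Because $\Oo_S(kf)=\pi^*\Oo_{\PP^1}(k)$, one has $h^0(\Oo_S(kf))=k+1$ and $h^1(\Oo_S(kf))=0$ for all $k\ge 0$, while $\Oo_S(kf-X)$ has no sections since its intersection with the moving class $f$ is $-a<0$. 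The sequence therefore gives $h^0(\Oo_X(kf))=(k+1)+h^1(\Oo_S(kf-X))$, i.e.
\[
\dim|k\,g^1_a|=k+h^1\bigl(\Oo_S(kf-X)\bigr).
\]
As each contribution to this $h^1$ will be monotone non-decreasing in $k$, the locus $\{k\mid\dim|k\,g^1_a|=k\}$ is the initial interval on which $h^1(\Oo_S(kf-X))=0$; hence $m(X,g^1_a)$ is one less than the largest $k$ for which this $h^1$ vanishes, and the whole problem reduces to locating that vanishing threshold.

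It remains to evaluate the kernel cohomology in each case. In (i) we have $S=Q\cong\FF_0$ and $kf-X=\Oo_Q(-a,k-b)$; the K\"unneth formula on $\PP^1\times\PP^1$ yields $h^1(\Oo_Q(-a,k-b))=(a-1)\max(0,k-b+1)$, which (as $a\ge 3$) vanishes exactly for $k\le b-1$, giving threshold $b-1$ and $m(X,g^1_a)=b-2$. In (ii) I would first pin down the paper's class $h$ as the negative section $C_0$ of $\FF_2$ (so $h^2=-2$, $h\cdot f=1$): this is forced by $H^2=(h+3f)^2=4$ for the minimal-degree hyperplane class $H=h+3f$, and is consistent with \eqref{F2}. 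Then $kf-X=\Oo_{\FF_2}(-ah+(k-b)f)$ has no $\pi$-pushforward, so $h^1$ equals $h^0(\PP^1,R^1\pi_*\Oo_{\FF_2}(-ah+(k-b)f))$. Using $\pi_*\Oo_{\FF_2}(jh)=\bigoplus_{i=0}^{j}\Oo_{\PP^1}(-2i)$ and relative duality with $\omega_\pi=\Oo_{\FF_2}(-2h-2f)$, one finds $R^1\pi_*\Oo_{\FF_2}(-ah)=\bigoplus_{j=0}^{a-2}\Oo_{\PP^1}(2j+2)$, whence
\[
h^1\bigl(\Oo_{\FF_2}(-ah+(k-b)f)\bigr)=\sum_{j=0}^{a-2}\max\bigl(0,\,2j+k-b+3\bigr).
\]
The dominant $j=a-2$ term vanishes precisely when $k\le b-2a+1$, forcing all smaller terms to vanish as well; so the threshold is $b-2a+1$ and $m(X,g^1_a)=b-2a$, as claimed.

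The only real obstacle is the $\FF_2$-bookkeeping in (ii): identifying $h=C_0$ correctly and computing $R^1\pi_*$ of a bundle that is negative along the fibers. To guard against convention and sign slips I would independently recompute $h^1(\Oo_{\FF_2}(-ah+(k-b)f))$ from Riemann--Roch on the surface, writing $h^1=h^2-\chi$ with $\chi(\Oo_{\FF_2}(D))=1+\tfrac12\,D\cdot(D-K)$, $K=-2h-4f$, and $h^2(\Oo_{\FF_2}(D))=h^0(\Oo_{\FF_2}(K-D))$; the two expressions for $h^1$ must agree, which simultaneously confirms the location of the threshold. Finally, the standing inequalities $3\le a\le b$ in (i) and $6\le 2a\le b$ in (ii) are exactly what place the boundary term where stated and guarantee $m(X,g^1_a)=b-2a\ge 0$, so no degenerate subcases intervene.
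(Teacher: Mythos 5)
Your proof is correct, and its skeleton is exactly the paper's: restrict the structure sequence of $X\subset S$ twisted by $kf$, use $h^0(\Oo_S(kf-X))=0$ and $h^1(\Oo_S(kf))=0$ to get $\dim|kg^1_a|=k+h^1(\Oo_S(kf-X))$, and then locate the vanishing threshold of that $h^1$. Part (i) coincides with the paper's argument verbatim (K\"unneth on $Q=\PP^1\times\PP^1$, threshold $t\le b-1$). The genuine difference is in part (ii): you evaluate the key cohomology by pushing forward along the ruling, computing $R^1\pi_*\Oo_{\FF_2}(-ah)=\bigoplus_{j=0}^{a-2}\Oo_{\PP^1}(2j+2)$ via $\pi_*\Oo_{\FF_2}(jh)=\bigoplus_{i=0}^{j}\Oo_{\PP^1}(-2i)$ and relative duality, which yields the closed formula $h^1=\sum_{j=0}^{a-2}\max(0,\,2j+k-b+3)$; the paper instead applies global Serre duality, $h^1(\Oo_{\FF_2}(-ah+(-b+t)f))=h^1(\Oo_{\FF_2}((a-2)h+(b-4-t)f))$, and quotes Antonelli's vanishing criterion ($h^1(\Oo_{\FF_2}(xh+yf))=0$ iff $y\ge 2x-1$, for $x\ge 0$) to land on the same threshold $t\le b-2a+1$. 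Your route buys self-containedness and the full $h^1$ profile (so monotonicity, and hence the ``initial interval'' claim, comes for free), at the cost of the relative-duality bookkeeping, which you handle correctly; the paper's route is shorter but leans on an external citation. Your preliminary identification of $h$ as the negative section with $h^2=-2$, forced by $(h+3f)^2=4$ and consistent with the paper's degree--genus formulas, correctly pins down a convention the paper leaves implicit, and the Riemann--Roch cross-check, while redundant, is harmless.
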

\begin{proof}
\begin{enumerate}
\item[(i)] 
From the sequence
\begin{equation}\label{eqa11}
0 \to \Oo_Q(-a,-b+t)\to \Oo_Q(0,t) \to \Oo_X(0,t)\to 0,
\end{equation}
$$h^0(Q,\Oo_Q(-a,-b+t)) =0, h^1(Q,\Oo_Q(0,t)) =0, h^0(Q,\Oo_Q(0,t)) =t+1 ~\,\forall t\ge -1.$$ For all $ t\in \NN$, the long cohomology exact sequence of \eqref{eqa11} gives $\dim |tg^1_a| =t$ if and only $h^1(Q,\Oo_Q(-a,-b+t)) =0$. By the  K\"{u}nneth formula, 
$$h^1(Q,\Oo_Q(-a,-b+t))=h^1(\PP^1,\Oo_{\PP^1}(-a))\cdot h^0(\Oo_{\PP^1}(-b+t)).$$
Thus $h^1(Q,\Oo_Q(-a,-b+t)) =0$ if and only if $t\le b-1$, hence $m(X,g^1_a)=b-2$.
\item[(ii)] 
For each $t\ge 1$, $$h^0(\FF_2,\Oo_{\FF_2}(-ah+(-b+t)f))=0, h^0(\FF_2,\Oo_{\FF_2}(tf)) =t+1, h^1(\FF_2,\Oo_{\FF_2}(tf)) =0.$$
From the long cohomology exact sequence of the sequence
\begin{equation*}\label{eqa22}
0\to \Oo_{\FF_2}(-ah+(-b+t)f)\to \Oo_{\FF_2}(tf) \to \Oo_X(tf)\to 0
\end{equation*}
$h^0(X,\Oo_X(tf)) =t+1$ if and only if $h^1(\FF_2,\Oo_{\FF_2}(-ah+(-b+t)f)) =0.$ 
From duality, we have $$h^1(\FF_2,\Oo_{\FF_2}(-ah+(-b+t)f))=h^1(\FF_2,((a-2)h+(b-4-t)f)).$$
Thus $h^1(\FF_2,\Oo_{\FF_2}(-ah+(-b+t)f))=0$ if and only if $t\le b-2a+1$; if $x\ge 0$, recall $h^1(\Oo_{\FF_2}(xh+yf)) =0$ if and only if $y\ge 2x-1$ by \cite[Lemma 2.1]{Antonelli}).
 Therefore we have $m(X,g^1_a)=b-2a.$
\end{enumerate}
\vspace*{-\baselineskip}
\end{proof}
\subsection{Some generalities on Hilbert functions}
In this paper, we determine the Hilbert function of smooth curves of degree $d\le 15$,  which is one of our main 
objects of study.  We collect
several  generalities and prove necessary lemmas.
\begin{remark}\label{lars}
Take an irreducible component $\Hh\subset \HO{d,g,5}$. Assume  that the moduli map $\Hh\stackrel{\mu}{\to}\Mm_g$ is dominant and hence $\rho(d,g,5)\ge 0$ by Brill-Noether theorem; cf. \cite{he} \cite[Theorem 19.9]{EH}. Take a general $X\in \Hh$. Since $X$ has general moduli, $h^1(X,\Oo_X(2)) =0$; cf. \cite{gieseker}. From $\rho(d,g,r)\ge 0$, $\deg\Oo_X(t)\ge 2g-1$ for $t\ge 3$ hence 
$h^0(X,\Oo_X(t)) =td+1-g$ for all $t\ge 2$. By \cite{l8},  the restriction map $\rho_{X,t}: H^0(\Oo_{\PP^5}(t))\to H^0(X,\Oo_X(t))$ has maximal rank, determining the Hilbert function of $X$, i.e. $h^0(\PP^5,\Ii_X(1))=0$ and $\forall t\ge 2$
\begin{align}
h^0(\PP^5,\Ii_X(t)) &= \max\{0,\tbinom{t+5}{5} -1-td+g\}\label{hil}\\h^1(\PP^5,\Ii_X(t)) &=\max\{0,dt+1-g -\tbinom{t+5}{5}\}.\nonumber
\end{align}
\end{remark}
\begin{remark}\label{basic1}
(i) For an integral and non-degenerate curve $X\subset \PP^n$ of degree $d$ and arithmetic genus $\pi(d,n)$, $X$ is ACM and is contained in a surface of minimal degree $(n-1)$ which is either a smooth surface scroll, a cone over a rational normal curve of $\PP^{n-1}$ or a Veronese surface if $n=5$; \cite[Theorem 3.7]{he}. For $t\in\NN$,  $h^1(\Ii_X(t)) =0$ and  
\begin{equation}\label{exthil}h^0(\PP^n,\Ii_X(t)) =\binom{n+t}{n} -td-1+\pi(d,n)
-h^1(X,\Oo_X(t)).
\end{equation}
\end{remark}
\begin{remark}\label{pre2}
(i) Let $Y\subset \PP^{r}$, $r\ge 2$ be an integral and non-degenerate curve with $\deg (Y)=d$. Fix a general hyperplane $H\subset \PP^r$ and set $A:= Y\cap H$.
The scheme $A$ is a set of $d$ points spanning $H$. As in \cite[Ch. III]{he} for each $t\in\NN$, set $h_A(t):= d-h^1(H,\Ii_{A,H}(t))$. Since $A$ spans $H$, $h_A(1) =r$.
Assume $h_A(t) <d$, i.e. assume $h^1(H,\Ii_{A,H}(t)) >0$. By \cite[Corollary 3.5]{he} we have $$h_A(t+1)\ge \min\{d,h_A(t)+h_A(1)-1\} =\min\{d,h_A(t)+r-1\},$$ i.e.
$h^1(H,\Ii_{A,H}(t+1)) \le \max\{0,h^1(H,\Ii_{A,H}(t))+1-r\}$. Thus, the sequence $$\{h^1(H,\Ii_{Y\cap H,H}(t))\}_{t\ge 0}$$ is  non-increasing and {\it strictly decreasing} until zero.
\vni
(ii)
Now take $r=5$ and $X\in \Hh_{d,g,5}$. Since $X\cap H$ is formed by $d$ points spanning $H$, $h^1(H,\Ii_{X\cap H,H}(t)) =0$ for all $d\le 4t+1$. Thus,
 $h^1(H,\Ii_{X\cap H,H}(t)) =0$ for all 
 
 (1) $t\ge 2$ if $d\le 9$
 
 (2) $t\ge 3$ if $10\le {d}\le 13$
 
 (3) $t\ge 4$ if $14\le d\le 17$.
 \vnii
 (iii) From the long cohomology  sequence of the exact sequence
\begin{equation}\label{eqppp2}
0\to \Ii_{X}(t-1)\to \Ii_X(t)\to \Ii_{X\cap H,H}(t)\to0,
\end{equation} we have 
$h^1(\Ii_X(t)) =0$ if $h^1(\Ii_{X\cap H,H}(t)) =h^1(\Ii_{X}(t-1)) =0$.
Hence by (ii) above,  for $d\le 9$ (resp. $10\le d\le 13$, resp. $14\le d\le 17$)  $X$ is ACM if and only if it is linearly normal (resp. linearly normal and $h^1(\Ii_X(2)) =0$, resp. $h^1(\Ii_X(t)) =0$ for $t=1,2,3$).
\vnii
(iv) For an integral non-degenerate curve $X\subset\PP^n$, $H$ a general hyperplane, let $u_{X,t}: H^0(\PP^5,\Ii_X(t)) \to H^0(H,\Ii_{X\cap H,H}(t))$ denote the restriction map. 
Since $h^0(\Ii_X(1)) =0$, the long cohomology exact sequence of \eqref{eqppp2} for $t=2$ gives the injectivity of $u_{X,2}$. If $X$ is linearly normal,  i.e. if $h^1(\Ii_X(1))=0$, the same long cohomology exact sequence gives the surjectivity of $u_{X,2}$. 
\end{remark}
\begin{remark}\label{aa1}
Let $Z\subset H =\PP^{n-1}$ be a finite set with  $d:= \#Z$ having the uniform position property and spanning $H$.  
\vnii
(i) Assume
$d\ge 2n+1 ~\mathrm{ and }~h^0(H,\Ii_{Z,H}(2))) \ge \binom{n+1}{2} -2(n-1)-1.$
Then $Z$ is contained in a rational normal curve in $H$; \cite[Lem. 3.9, Prop. 3.19]{he}. 
\noindent
\vni
(ii) Assume further $d\ge 2n+3 \mathrm{~ and ~} h^0(H,\Ii_{Z,H}(2)) \ge \binom{n+1}{2} -2n.$
Then $Z$ is contained in either an irreducible curve with arithmetic genus $1$ of degree $n$ (elliptic normal curve) or a rational normal curve in $H$; \cite[Prop. 3.19 and 3.20]{he}. 
\end{remark}
\begin{lemma}\label{oo1}
Fix an integer $n\ge 5$ and let $Y\subset \PP^{n+1}$ be an extremal curve contained in a minimal degree smooth surface $S_1$. Take any $o\in \PP^{r+1}\setminus \mbox{Sec}(S_1)$ such that the linear projection $\ell_o: \PP^{n+1}\setminus \{o\}\to \PP^n$ induces an embedding of $S_1$ into $\PP^n$. Set $X:= \ell_o(Y)$. Then $h^1(\Ii_X(t)) =0$ for all $t\ge 2$ and $h^1(\Ii_X(1)) =1$.
\end{lemma}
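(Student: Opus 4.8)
The plan is to translate the projection into statements about the ambient ACM curve $Y$, then to reduce the required vanishing for all $t\ge 2$ to the single quadratic case $t=2$, and finally to treat that case on the minimal surface $S_1$. Since $Y\subset S_1$ we have $\Sec(Y)\subseteq\Sec(S_1)$, so $o\notin\Sec(Y)$ and $\ell_o$ restricts to a closed embedding of $Y$; hence $X:=\ell_o(Y)\cong Y$, with $\deg X=d$, $g(X)=\pi(d,n+1)$, and $\Oo_X(1)\cong\Oo_Y(1)$. In particular $h^i(\Oo_X(t))=h^i(\Oo_Y(t))$ for all $i,t$, and $\bigoplus_t H^0(\Oo_X(t))=\bigoplus_t H^0(\Oo_Y(t))=:R$ is a single graded ring. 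Because $Y$ is extremal it is ACM by Remark \ref{basic1}, hence linearly normal with $h^0(\Oo_Y(1))=n+2$. As $X$ is nondegenerate in $\PP^n$ we have $h^0(\Ii_X(1))=0$, and the structure sequence gives $h^1(\Ii_X(1))=h^0(\Oo_X(1))-(n+1)=1$, which is the first assertion.

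For the reduction, choose coordinates on $\PP^{n+1}$ with $o=[0:\cdots:0:1]$, set $w=z_{n+1}|_Y\in R_1$ and $V=\langle z_0|_Y,\dots,z_n|_Y\rangle$, so that $R_1=V\oplus\KK w$. Let $R_{X,t}\subseteq R_t$ be the image of $\op{Sym}^tV$; since $H^1(\Oo_{\PP^n}(t))=0$ one has $h^1(\Ii_X(t))=\dim R_t-\dim R_{X,t}$, so the claim in degree $t$ is exactly that $R_t=R_{X,t}$. Because $Y$ is ACM, $R$ is generated in degree one, whence $R_t=V\cdot R_{t-1}+w\cdot R_{t-1}$ while $R_{X,t}=V\cdot R_{X,t-1}$. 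Granting the quadratic case $R_2=R_{X,2}$ — equivalently $w\,R_1\subseteq R_{X,2}$ — an induction gives $w\,R_{X,t-1}\subseteq R_{X,t}$ for all $t\ge2$ (write $w\,R_{X,t}=V\,(w\,R_{X,t-1})$ and feed in the inductive inclusion), and a second induction then yields $R_t=R_{X,t}$ for every $t\ge2$. Thus the whole statement for $t\ge2$ follows from the single vanishing $h^1(\Ii_X(2))=0$. (Alternatively one could propagate the degree-$2$ vanishing upward through the hyperplane-section sequence \eqref{eqppp2} using the monotonicity of $\{h^1(H,\Ii_{X\cap H,H}(t))\}$ recorded in Remark \ref{pre2}(i).)

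It remains to prove $h^1(\Ii_X(2))=0$, and this is the main obstacle. I would work on the surface: $S:=\ell_o(S_1)\subset\PP^n$ is smooth of degree $n$ (projection from off $\Sec(S_1)$), and $X\subset S$ is an effective Cartier divisor, so $\Ii_{X,S}(2)\cong\Oo_S(2H_S-X)$ with $H_S$ the hyperplane class. The sequence $0\to\Ii_S(2)\to\Ii_X(2)\to\Oo_S(2H_S-X)\to0$ then reduces $h^1(\Ii_X(2))=0$ to the two vanishings $h^1(\Ii_S(2))=0$ and $h^1(S,\Oo_S(2H_S-X))=0$. The second is a Riemann–Roch and vanishing computation on the rational scroll $S\cong S_1$ using the explicit class of $X$ (the one $Y$ carries on $S_1$), and should be routine. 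The first, $h^1(\Ii_S(2))=0$, is the crux: $S$ is a smooth surface of almost minimal degree obtained by projecting the minimal (hence ACM, quadratically defined) surface $S_1$ from a point off its secant variety.

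The heart of the argument is therefore the surjectivity underlying $h^1(\Ii_S(2))=0$. Concretely, $S_1$ is cut out by the $2\times2$ minors of the $1$-generic matrix of linear forms defining the scroll, and one must check that, after the coordinate change above, the $\partial/\partial z_{n+1}$-images of these minors span all linear forms — equivalently, that every product $z_{n+1}z_j$ and $z_{n+1}^2$ is congruent modulo $I_{S_1}$ to a quadric in $z_0,\dots,z_n$. I expect this to be exactly where the hypothesis $o\notin\Sec(S_1)$ enters: it is precisely the nondegeneracy ensuring that the projected matrix of linear forms remains $1$-generic, so that $z_{n+1}$ can be eliminated at the quadratic level and the only cohomological defect of $S$ sits in degree one. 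This step can be carried out either by the explicit minors–secant analysis just sketched or by invoking the known cohomology of (projected) varieties of minimal degree; pinning it down cleanly is the decisive and most delicate point of the proof.
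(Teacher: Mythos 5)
Your preparatory steps are sound. The identification $X\cong Y$ and the count $h^1(\Ii_X(1))=h^0(X,\Oo_X(1))-(n+1)=1$ are correct; the algebraic reduction of all vanishings $h^1(\Ii_X(t))=0$, $t\ge 2$, to the single case $t=2$ is valid, since ACM-ness of $Y$ does make the section ring $R$ a quotient of the homogeneous coordinate ring, hence generated in degree one, so your two inductions go through; and the splitting of the $t=2$ case via $0\to\Ii_S(2)\to\Ii_X(2)\to\Oo_S(2H_S-X)\to 0$ is fine. Moreover the piece you defer as ``routine'' really is: $h^1(S,\Oo_S(2H_S-X))=h^1(S_1,\Ii_{Y,S_1}(2))$, which vanishes because $H^0(S_1,\Oo_{S_1}(2))\to H^0(Y,\Oo_Y(2))$ is surjective ($Y$ is ACM) and $h^1(S_1,\Oo_{S_1}(2))=0$ on a minimal degree surface.

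The genuine gap is exactly the step you flag as the crux and leave open: $h^1(\Ii_S(2))=0$ for the projected surface $S=\ell_o(S_1)\subset\PP^n$. What you offer is a program, not a proof: that the $2\times 2$ minors cutting out $S_1$ can be rearranged modulo $I_{S_1}$ so as to eliminate $z_{n+1}$ from every monomial $z_{n+1}z_j$ and $z_{n+1}^2$, and that $o\notin\Sec (S_1)$ should keep some ``projected matrix'' $1$-generic. Neither claim is established, and the second is not even well posed ($1$-genericity is a property of a specific matrix of linear forms, and $o\notin\Sec(S_1)$ only guarantees that $\ell_o|_{S_1}$ is an embedding; by itself it says nothing about quadratic normality of the image). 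This is not a small verification: a smooth nondegenerate surface of degree $n$ in $\PP^n$ with $h^1(\Ii_S(1))=1$ is a surface of almost minimal degree, and the vanishing of $h^1(\Ii_S(t))$ for $t\ge 2$ is a theorem, not a computation. The paper fills precisely this hole by citing Lazarsfeld's sharp Castelnuovo bound \cite{laz}, which for such a surface gives $h^1(\Ii_S(t))=0$ for all $t\ge 2$ at once. Note also that once such a result is invoked, your reduction to $t=2$ becomes superfluous: the paper simply composes, for every $t\ge 2$, the two surjections $H^0(\PP^n,\Oo_{\PP^n}(t))\onto H^0(S,\Oo_S(t))$ (Lazarsfeld) and $H^0(S,\Oo_S(t))\onto H^0(X,\Oo_X(t))$ (ACM-ness of $Y$, transported through the isomorphisms induced by $\ell_o$). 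So either you must import \cite{laz} as the paper does --- in which case your argument collapses to the paper's --- or you must actually prove the degree-$2$ statement you sketched, which is the decisive missing content.
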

\begin{proof}
Since $Y$ is linearly normal, $h^1(\Ii_X(1))=1$. 
Set $S:= \ell_o(S_1)$.
For any $t\ge 2$,  we consider\vni
\begin{tikzcd}
		H^0(\PP^{n+1},\Oo_{\PP^{n+1}}(t)) \arrow[r, "\rho_{S_1,t}"] 
		& H^0(S_1,\Oo_{S_1}(t)) \arrow[r, "\rho_{S_1,Y,t}"] \arrow[d, "\cong"'] & H^0(Y,\Oo_Y(t)) \arrow[d, "\cong"] \\
		H^0(\PP^n,\Oo_{\PP^n}(t)) \arrow[r, "\rho_{S,t}"]                 & H^0(S,\Oo_S(t)) \arrow[r, "\rho_{S,X,t}"]                 & H^0(X,\Oo_X(t))               
	\end{tikzcd}
\noindent	
\vni
with obvious horizontal restriction maps.  By assumption, $\deg S=n$ and $S$ is smooth isomorphic to $S_1$ as an abstract surface. By a theorem of R. Lazarsfeld \cite[Theorem, p. 423]{laz}, 
$h^1(\Ii_S(t)) =0$ for all $t\ge 2$.  Hence $\rho_{S,t}$ is surjective for all $t\ge 2$.

Fix an integer $t\ge 2$ and let $\rho_{X,t}: H^0(\Oo_{\PP^n}(t))\to H^0(X,\Oo_X(t))$ be the restriction map. Since $\rho_{X,t}=\rho_{S,X,t}\circ \rho_{S,t}$
and $\rho_{S,t}$ is surjective, $h^1(\Ii_X(t)) =0$ if and only if $\rho_{S,X,t}$ is surjective.
Since $\ell _o$ is linear projection mapping $S_1$ isomorphically onto $S$ and $Y$ isomorphically onto $X$,  $\ell _o$ induces isomorphisms $H^0(S_1,\Oo_{S_1}(t))\cong H^0(S,\Oo_S(t))$ and $H^0(Y,\Oo_Y(t))\cong H^0(X,\Oo_X(t))$ commuting with the restriction maps. Hence $\rho_{S,X,t}$ is surjective if and only $\rho_{S_1,Y,t}$ is surjective. Since $Y$ is ACM (\cite[Th. 3.7]{he}), $\rho_{S_1,Y,t}$ is surjective and so is $\rho_{S,X,t}$ hence  the conclusion.
\end{proof}

\begin{lemma}\label{o1oo1}
Fix $d\in \{14,15\}$. Let $Y\subset \PP^5$ be an integral and non-degenerate curve of degree $d$. Let $H\subset \PP^5$ be a general hyperplane.
If $h^1(H,\Ii_{Y\cap H,H}(3)) >0$, then $Y\cap H$ is contained in an integral curve which is either a rational normal curve or a linearly normal curve of arithmetic genus $1$, the latter being possible only for $d=15$.
\end{lemma}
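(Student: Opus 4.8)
The plan is to deduce both alternatives from the structural results recorded in Remark~\ref{aa1}, the whole argument being a short numerical estimate on the Hilbert function of the general hyperplane section $Z:=Y\cap H$.

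First I would set up the section. Since $Y$ is integral and non-degenerate of degree $d$ and $H$ is general, $Z$ is a set of $d$ points spanning $H=\PP^4$ and, by the uniform position principle in characteristic zero, $Z$ has the uniform position property. Thus $Z$ fits the hypotheses of Remarks~\ref{pre2} and~\ref{aa1} with $n=5$ (so $H=\PP^{n-1}$). In particular $h_Z(1)=5$, and using $h_Z(t)=d-h^1(H,\Ii_{Z,H}(t))$ together with $h^0(\Oo_{\PP^4}(2))=\binom{6}{4}=15$ we have $h^0(H,\Ii_{Z,H}(2))=15-h_Z(2)=15-d+h^1(H,\Ii_{Z,H}(2))$.

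The key step is to turn the hypothesis $h^1(H,\Ii_{Z,H}(3))>0$ into a lower bound on $h^0(H,\Ii_{Z,H}(2))$. Applying the inequality of Remark~\ref{pre2}(i) with $r=5$ gives $h^1(H,\Ii_{Z,H}(t+1))\le\max\{0,\,h^1(H,\Ii_{Z,H}(t))-4\}$. Since $h^1(H,\Ii_{Z,H}(3))\ge 1>0$, the maximum at $t=2$ must be attained on its positive branch, forcing $h^1(H,\Ii_{Z,H}(2))\ge h^1(H,\Ii_{Z,H}(3))+4\ge 5$. Substituting into the identity above yields $h^0(H,\Ii_{Z,H}(2))\ge 20-d$.

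Finally I would split on the degree. For $d=14$ we get $h^0(H,\Ii_{Z,H}(2))\ge 6=\binom{6}{2}-2(n-1)-1$ and $d\ge 11=2n+1$, so Remark~\ref{aa1}(i) places $Z$ on a rational normal curve. For $d=15$ we get $h^0(H,\Ii_{Z,H}(2))\ge 5=\binom{6}{2}-2n$ and $d\ge 13=2n+3$, so Remark~\ref{aa1}(ii) places $Z$ on a rational normal curve or on an elliptic normal curve of $\PP^4$, the latter being integral, linearly normal and of arithmetic genus $1$; this is the stated dichotomy, with the genus-one alternative arising only for $d=15$. I do not expect a genuine geometric obstacle: the content is entirely carried by the cited results from \cite{he}, and the only points requiring care are verifying the uniform position hypothesis (so that Remarks~\ref{pre2} and~\ref{aa1} apply) and using the strict-decrease inequality of Remark~\ref{pre2}(i) in the correct direction.
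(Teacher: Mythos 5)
Your proposal is correct, but it takes a genuinely different route from the paper's proof in both of its key steps. Writing $Z:=Y\cap H$ as you do: to get from $h^1(H,\Ii_{Z,H}(3))>0$ to a lower bound on $h^0(H,\Ii_{Z,H}(2))$, the paper does \emph{not} use the growth inequality of Remark~\ref{pre2}(i). Instead it argues geometrically (its step (a)): assuming $h^0(H,\Ii_{Z,H}(2))\le 4$, it chooses a quadric whose intersection with $Z$ is a subset $A$ of $d-5$ points, and then, for each of the five leftover points in turn, adds a hyperplane of $H$ to produce a cubic containing the previously added points but not the next one, the choices being legitimized by the symmetric monodromy of the general hyperplane section; this shows the five points impose independent conditions on cubics, whence $h^1(H,\Ii_{Z,H}(3))=0$. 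That argument only yields $h^0(H,\Ii_{Z,H}(2))\ge 5$ for \emph{both} degrees, so the paper lands in Remark~\ref{aa1}(ii) in both cases and must then dispose of the genus-one alternative for $d=14$ separately: if $Z$ lay on a linearly normal genus-one quintic $C$, the ACM property of $C$ identifies $h^1(H,\Ii_{Z,H}(3))$ with $h^1(C,\Ii_{Z,C}(3))$, and the latter vanishes by Riemann--Roch because $\deg Z=14<15=\deg\Oo_C(3)$, a contradiction. Your argument replaces both steps by numerics: the Castelnuovo-type drop $h^1(H,\Ii_{Z,H}(2))\ge h^1(H,\Ii_{Z,H}(3))+4$ gives the degree-sensitive bound $h^0(H,\Ii_{Z,H}(2))\ge 20-d$, which for $d=14$ already meets the stronger threshold of Remark~\ref{aa1}(i) and outputs a rational normal curve directly. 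You gain brevity and a single uniform mechanism that produces the $d=14/15$ dichotomy for free; the paper's version gains a structural explanation of that dichotomy (a subscheme of an elliptic normal quintic fails to impose independent conditions on cubics only when it is a full cubic section, which forces $d=15$), and the intermediate identity $h^1(H,\Ii_{Z,H}(3))=h^1(C,\Ii_{Z,C}(3))$ is the kind of statement the paper reuses in spirit elsewhere.

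One point you should make explicit: Remark~\ref{pre2}(i) is stated under the hypothesis that $h^1$ at level $t$ is positive, so invoking it at $t=2$ presupposes $h^1(H,\Ii_{Z,H}(2))>0$. This is harmless, since the same remark records that the sequence $\left\{h^1(H,\Ii_{Z,H}(t))\right\}_{t\ge 0}$ is non-increasing, so positivity at level $3$ forces positivity at level $2$; but your phrase about the maximum being attained on its positive branch quietly assumes the inequality applies, and this one-line justification should be inserted.
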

\begin{proof}
By the uniform position lemma \cite[pp. 111--112]{ACGH}, the monodromy group of the generic hyperplane section of $Y$ is the full symmetric group.

(a) In this step, we assume $h^0(H,\Ii_{Y\cap H,H}(2))\le 4$. 

\noindent
Since $d\ge h^0(H,\Oo_H(2))-4$, there is $S\subset Y\cap H$ such that $\#S=d-4$ and $h^0(H,\Ii_{S,H}(2)) = 4$. Hence for any $A\subset S$ with $\#A =d-5$, the base locus of $|\Ii_A(2)|$ does not contain the remaining point of $S\setminus A$. Since the  monodromy group of the generic hyperplane section $Y\cap H$ is $S_d$, we have $h^1(H,\Ii_{A,H}(2)) =0$ and the base locus of $|\Ii_{Y\cap H,H}(2)|$ contains no other point of $Y\cap H$.
We order the points $p_1$, $p_2$, $p_3$, $p_4$, $p_5$ of $Y\cap H \setminus A$. Set $A_0:= A$. For $i=1,\dots ,5$ set $A_i:= A\cup \{p_1,\dots ,p_i\}$.
Since $h^1(H,\Ii_{A,H}(2)) =0$, we have $h^1(H,\Ii_{A,H}(3)) =0$. 
Under our assumption $h^0(H,\Ii_{Y\cap H,H}(2))\le 4$, 
we claim that  $h^1(H,\Ii_{A_i,H}(3))=0$, $i=1,\dots ,5$. 
Take $Q\in |\Ii_A(2)|$ such that $Q\cap Y\cap H =A$.
Since $Y\cap H$ is in linearly general position, { there is a hyperplane $M\cong\PP^3\subset H$ such that $\{p_1,\cdots,p_{i-1}\}\subset M$ and $Y\cap H\cap M = (Y\cap H)\setminus A_{i}$. The cubic hypersurface $Q\cup M$ contains $A_{i-1}$, but not $A_i$.}

(b) By step (a), we may assume $h^0(H,\Ii_{Y\cap H,H}(2)) \ge 5$.   Call $\Bb$ the base locus of $|\Ii_Y(2)|$. By Remark \ref{aa1}(ii),  $Y\cap H$ is contained in an irreducible curve $C$, which is either a rational normal curve or a linearly normal curve of arithmetic genus $1$. Assume that $C$ is a linearly normal curve of arithmetic genus $1$. Since $C$ is arithmetically Cohen-Macaulay, 
$h^1(H,\Ii_{C,H}(3)) =0$ and the restriction map 
$\rho_C: H^0(H,\Oo_H(3)) \to H^0(C,\Oo_{C}(3))$ 
is surjective. We have $\rho_{Y\cap H} =\rho_{C,Y\cap H}\circ \rho_C$,

$H^0(H,\Oo_H(3))\stackrel{\rho_{Y\cap H}}{\to} H^0(Y\cap H,\Oo_{Y\cap H}(3))\to H^1(H,\Ii_{Y\cap H,H}(3))$

$H^0(C,\Oo_C(3))\stackrel{\rho_{C,Y\cap H}}{\to} H^0(Y\cap H,\Oo_{Y\cap H}(3))\to H^1(C,\Ii_{Y\cap H,C}(3))$
\vni
where $\rho_{Y\cap H}$, $\rho_{C,Y\cap H}$ are obvious restriction maps. By the surjectivity of $\rho_C$,
we get  $$h^1(H,\Ii_{Y\cap H,H}(3))=h^1(C,\Ii_{Y\cap H,C}(3)).$$

Since $C$ is an integral curve of arithmetic genus $1$ and $\deg(\Oo_C(3))=15$, {$h^1(C,\Ii_{Z,C}(3))=0$ for every zero-dimensional scheme $Z\subset C$ of degree at most $14$, by Riemann-Roch.}
\end{proof}
We recall some standard notation concerning linear systems and divisors on a blown up projective plane. Let $\PP^2_s$ be the rational surface $\PP^2$ blown up at $s$ general points. Let $e_i$ be the class of the exceptional divisor
$E_i$ and $l$ be the class of a line $L$ in $\PP^2$. For integers  $b_1\ge b_2\ge\cdots\ge b_s$, let $(a;b_1,\cdots, b_i, \cdots,b_s)$ denote class of the linear system $|aL-\sum b_i E_i|$ on $\PP^2_s$.  By abusing notation we use the expression $(a;b_1,\cdots, b_i, \cdots,b_s)$ for the divisor $aL-\sum b_i E_i$ and $|(a;b_1,\cdots, b_i, \cdots,b_s)|$ for the linear system $|aL-\sum b_i E_i|$. We use the convention 
\vspace{-3pt}
$$(a;b_1^{s_1},\cdots,b_j^{s_j},\cdots,b_t^{s_t}), ~ \sum s_j=s$$ 
when  $b_j$ appears $s_j$ times consecutively  in the linear system $|aL-\sum b_i E_i|$.

\begin{remark}\label{delpezzo1} (i) Let $T\cong\PP^2_4\subset\PP^5$ be a smooth del Pezzo surface. Let 
$L:=(a;b_1,\cdots,b_4)$ be a very ample line bundle and $X\in|L|$. We have 
\begin{equation}\label{delPC}\deg X =3a-\sum b_i, X^2=a^2-\sum b_i^2=2g-2-K_S\cdot X=2g-2+\deg X.
\end{equation}
The dimension of such family of curves lying on smooth del Pezzo surfaces can be counted as follows regardless of particular choice of classes $(a;b_1,\cdots,b_4)$. This will enable us to compute the Hilbert function of a {\it general member} in a component of $\HO{d,g,5}$.
By Riemann-Roch on $T$ we have
\begin{eqnarray*}\label{delpezzo}
h^0(T,L)&=&h^1(T,L)-h^2(T,L)+\chi(T)+\frac{1}{2}(L^2-L\cdot\omega_T)\\
&=&h^1(T,L)-h^2(T,L)+1+\frac{1}{2}(X^2+\deg X)\\
&=&h^1(T,L)-h^2(T,L)+1+(g-1+\deg X).
\end{eqnarray*}
\vni
By Serre duality, 
$$h^1(T,L)=h^1(T, \omega_T\otimes L^{-1})=h^1(T,\mathcal{O}_T(-(a+3)l+\sum(b_i+1)e_i).$$
Since $L$ is ample, $E:=(a+3)l-\sum(b_i+1)e_i$ is (very) ample \cite[Theorem]{sandra}, thus $$h^1(T,L)=h^1(T,\mathcal{O}_T(-(a+3)l+\sum(b_i+1)e_i))=0$$ by Kodaira's vanishing theorem. 
We  note the divisor $-E$ is not linearly equivalent to an effective divisor; if it were, one would have $-E\cdot l=-(a+3)\ge 0$ whereas $l^2=1\ge 0$, a contradiction. Hence it follows that  $$h^2(T,L)=h^0(T,\omega_T\otimes L^{-1})=h^0(T,\mathcal{O}_T(-E)=0$$ and we obtain $$h^0(T,L)=g+\deg X.$$ Let $\mathcal{I}\subset\mathcal{H}_{d,g,5}$ be the locus consisting of  curves lying on a smooth del Pezzo surface. Let  $\h{D}_5$ be the space of smooth del Pezzo surfaces in $\PP^5$; $\dim\h{D}_5=35$. We have 
$$\dim\h{I}=\dim \h{D}_5+\dim |\mathcal{O}_T(a;b_1,\cdots ,b_4)|=35+g-1+\deg X<\Xx(d,g,5)$$
 if and only if \begin{equation}\label{nofull}d>\frac{1}{5}(3g+32).
 \end{equation}
\vnii (ii) An integral curve $X$ lying on a singular del Pezzo surface $T\subset\PP^5$ with isolated singularities is a specialization of smooth curves lying on a smooth del Pezzo; cf. \cite[Proposition 2.1]{edinburgh}
\end{remark}
\subsection{Smoothability of two nodal curves}
We prove the existence of some smoothable nodal curves for the next section.
In doing so, we need to use the case $n=5$ of the following lemma.
\begin{lemma}\label{rnc1}
Fix an integer $n\ge 3$ and a hyperplane $H\subset \PP^n$. Fix $A\subset H$ such that $\#A = n$ and $A$ spans $H$. Take $E\subseteq A$ such that
$\#E=3$, say $E=\{p_1,p_2,p_3\}$. Fix general lines $L_1$, $L_2$ and $L_3$ such that $p_i\in L_i$ for all $i$. Then there is a rational normal curve $C\subset\PP^n$ such that $A\subset C$ and each $L_i$ is the tangent line of $C$ at $p_i$.
\end{lemma}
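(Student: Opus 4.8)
The plan is to realize the desired curve as a member of the family of all rational normal curves through $A$ and to show that the three tangent directions at $p_1,p_2,p_3$ can be prescribed independently and arbitrarily. Let $\Rr_A$ denote the locally closed subscheme of the Hilbert scheme parametrizing smooth rational normal curves $C\subset\PP^n$ with $A\subset C$. First I would check that $\Rr_A\neq\emptyset$: completing $A$ to a set of $n+3$ points of $\PP^n$ in linearly general position and taking the unique rational normal curve through them produces a curve in $\Rr_A$ (the $n$ points of $A$ span only $H$, but a generic choice of the three extra points keeps all $n+3$ in general position). Since the normal bundle of a degree-$n$ rational normal curve is $N_{C/\PP^n}\cong\Oo_{\PP^1}(n+2)^{\oplus(n-1)}$ and $A$ consists of $n$ points, the tangent space to $\Rr_A$ at $[C]$ is $H^0(N_{C/\PP^n}(-A))\cong H^0\bigl(\Oo_{\PP^1}(2)^{\oplus(n-1)}\bigr)$; as $h^1(\Oo_{\PP^1}(2))=0$, the scheme $\Rr_A$ is smooth of dimension $3(n-1)$ at every point.

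Observe next that any $C\in\Rr_A$ meets $H$ in a degree-$n$ divisor containing the $n$ distinct points of $A$, so $C\cap H=A$ scheme-theoretically and each intersection is transverse; in particular $T_{p_i}C\not\subset H$. Writing $\sigma_i\in\PP^1$ for the parameter with $C(\sigma_i)=p_i$, I would consider the tangent map
\[
\Tt:\Rr_A\dashrightarrow \prod_{i=1}^{3}\PP(T_{p_i}\PP^n),\qquad [C]\mapsto (T_{p_1}C,\,T_{p_2}C,\,T_{p_3}C),
\]
into a product of three copies of $\PP^{n-1}$, a variety of the same dimension $3(n-1)$.

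The crux is to compute the differential of $\Tt$ at a general $[C]\in\Rr_A$ and to see that it is an isomorphism. A first-order deformation of $C$ fixing $A$ is a section $v\in H^0(N_{C/\PP^n}(-A))$; since $v$ vanishes at $p_i$ it fixes that point to first order, and the induced first-order motion of the tangent line $T_{p_i}C$ is recorded by the value of $v$ in the fibre $N_{C/\PP^n}(-A)|_{\sigma_i}\cong N_{p_i}\otimes T^{*}_{\sigma_i}\PP^1$, which is canonically the tangent space to $\PP(T_{p_i}\PP^n)$ at $[T_{p_i}C]$. Thus $d\Tt$ is identified with the evaluation map
\[
H^0\bigl(\Oo_{\PP^1}(2)^{\oplus(n-1)}\bigr)\longrightarrow \bigoplus_{i=1}^{3} \bigl(\Oo_{\PP^1}(2)|_{\sigma_i}\bigr)^{\oplus(n-1)}
\]
at the three distinct points $\sigma_1,\sigma_2,\sigma_3$. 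On each $\Oo_{\PP^1}(2)$-summand this is evaluation of a quadratic form at three distinct points, an isomorphism by Lagrange interpolation; hence $d\Tt$ is an isomorphism and $\Tt$ is dominant.

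Consequently the image of $\Tt$ contains a dense open subset of $\prod_{i=1}^{3}\PP^{n-1}$. As each $L_i$ is a general line through $p_i$ (in particular $L_i\not\subset H$), the triple $(L_1,L_2,L_3)$ lies in this open set, so some $C\in\Rr_A$ has $T_{p_i}C=L_i$ for $i=1,2,3$ while $A\subset C$; this is the required rational normal curve. I expect the only delicate point to be the identification of $d\Tt$ with the evaluation map, that is, verifying that a normal field vanishing at $p_i$ moves the tangent direction there by precisely its first jet. This is the standard relation between jets of normal fields and the osculating flag; the remaining ingredients (the splitting type of the normal bundle, the vanishing $h^1(\Oo_{\PP^1}(2))=0$, and Lagrange interpolation) are routine.
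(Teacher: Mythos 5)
Your proof is correct, and it reaches the conclusion by a genuinely different mechanism than the paper's. The paper argues incrementally: by projective homogeneity of the triples $(H,A,L_1)$ it first produces a curve $C_1$ tangent to $L_1$ at $p_1$, then (by the same dominance argument run once) a curve $C_2$ tangent to $L_1$ and $L_2$; it then considers the family $\Ee$ of rational normal curves containing the non-reduced scheme $Z=(A\setminus \{p_1,p_2\})\cup \{2p_1\}\cup\{2p_2\}$ --- tangency being encoded as containment of a doubled point --- shows $\dim \Ee=h^0(N_{C_2}(-Z))=n-1$ using the same splitting $N_C\cong \Oo_{\PP^1}(n+2)^{\oplus (n-1)}$ and $h^1$-vanishing that you use, and deduces dominance of the map $\Ee\to \mathbb{H}_3\cong \PP^{n-1}$, $X\mapsto T_{p_3}X$, from the assertion that this map is \emph{injective} together with equality of dimensions. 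You instead work once and for all with the $3(n-1)$-dimensional family $\Rr_A$ and prove dominance of the product map $\Tt$ by identifying its differential with the evaluation map
\[
H^0\bigl(\Oo_{\PP^1}(2)^{\oplus (n-1)}\bigr)\longrightarrow \bigoplus_{i=1}^{3}\bigl(\Oo_{\PP^1}(2)|_{\sigma_i}\bigr)^{\oplus (n-1)},
\]
an isomorphism by interpolation since a nonzero section of $\Oo_{\PP^1}(2)$ cannot vanish at three distinct points. What your route buys: no induction, and no appeal to injectivity of the tangent-line map, which the paper asserts without proof (it amounts to the classical uniqueness of the rational normal curve through $n+3$ points in general position, in its infinitely-near form); moreover your differential computation is valid at \emph{every} point of $\Rr_A$, so $\Tt$ is in fact \'etale onto its image and dominance is automatic. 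What the paper's route buys: by phrasing tangency as containment of $Z$ inside the Hilbert scheme, it never has to spell out the identification you rightly flag as the delicate point --- that the first jet at $\sigma_i$ of a normal field vanishing on $A$ records the first-order motion of the tangent line --- though this same infinitesimal fact is exactly what makes containment of $2p_i$ equivalent to fixing $T_{p_i}C$, so the two proofs ultimately rest on the same deformation-theoretic mechanism. Your non-emptiness argument for $\Rr_A$ (completing $A$, which is projectively independent since it consists of $n$ points spanning $H$, by three general points to $n+3$ points in linearly general position) is also fine, and replaces the paper's homogeneity remark.
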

\begin{proof}
Note that for a fixed $H$, all triples $(H,A,L_1)$ are projectively equivalent. 
The existence of a rational normal curve $C_1$ containing $A$ with the tangent to $L_1$ at $p_1$ is obvious; we may choose $L_1\nsubseteq H$. 
Assume, for the moment, the existence of a rational normal curve $C_2$ containing $A$
and tangent to $L_1$ at $p_1$ and to $L_2$ at $p_2$.

Recall that
$N_{C_2}={\Oo_{\PP^1}}(n+2)^{\oplus n-1}$ is a direct sum of  ($n-1$)  line bundles of degree $(n+2)$.  Set $Z:= (A\setminus \{p_1,p_2\})\cup \{2p_1\}\cup\{ 2p_2\}$. We have $\deg Z=n+2$
as an effective divisor on $C_2$, $h^1(C_2,N_{C_2}(-Z)) =0$ and $h^0(C_2,N_{C_2}(-Z)) =n-1$. Let $\Ee\subset\mathcal{H}_{n,0,n}$ be the closed subscheme of the Hilbert scheme $\mathcal{H}_{n,0,n}$ parametrizing smooth rational normal curve containing $Z$. 
We have $C_2\in \cal{E}\neq\emptyset.$
Since $h^1(C_2,N_{C_2}(-Z)) =0$, $C_2$ is a smooth point of $\Ee$ and $\dim _{C_2}\Ee = h^0(C_2,N_{C_2}(-Z)) =n-1.$

 Any $X\in \Ee$ contains $A$ and has tangent line $L_i$ at $p_i$, $i=1,2.$
Let $\mathbb{H}_3\subset\mathbb{G}(1,n)$ be the set of lines in $\PP^n$ containing $p_3$. Note that $\mathbb{H}_3\cong\PP^{n-1}$.
We have an injective map $\Ee\stackrel{\eta}{\to}\mathbb{H}_3$, sending $X\in\Ee$ to the tangent 
line $\eta(X)\in \mathbb{H}_3$ to $X$ at $p_3$. Since $\dim\Ee=\dim \mathbb{H}_3=n-1$, $\eta$ is dominant. Thus 
for a general $L_3\in \mathbb{H}_3$, there is a rational normal curve $C\subset\PP^n$ belonging to the family $\Ee$ with the desired property.
For the existence of $C_2$ we assumed tentatively in the beginning of the proof, we may use the same argument as above to get $C_2$ from $C_1$, which is easier and simpler.
\end{proof}
\begin{lemma}\label{1513} There is a component of $\HO{15,13,5}$ whose general element is ACM.
\end{lemma}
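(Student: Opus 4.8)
The conceptual core is that being ACM is an \emph{open} condition on the Hilbert scheme: for a fixed Hilbert polynomial the numbers $h^0(\Ii_X(t))$ are upper semicontinuous and ACM-ness means they attain their minimal values (equivalently, by Remark~\ref{pre2}(iii), $h^1(\Ii_X(t))=0$ for $t=1,2,3$), so the ACM locus is open. Hence it suffices to exhibit a \emph{single} smooth ACM curve $X\subset\PP^5$ of degree $15$ and genus $13$: any component $\Hh\subset\HO{15,13,5}$ through $[X]$ then has ACM general member, since the ACM locus in $\Hh$ is open, nonempty, hence dense. Because $\rho(15,13,5)=-5<0$ these curves have special moduli and a direct smooth model is not obvious — for instance the curves on a quintic del Pezzo surface are linearly normal but lie on $5$ quadrics and already fail $2$-normality — so I would produce $X$ by constructing a nodal ACM curve and smoothing it.

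First I would build a connected nodal curve $Y=X_0\cup R\subset\PP^5$ with $\deg Y=15$ and $p_a(Y)=13$. Take $X_0$ to be a smooth extremal curve of degree $10$ and genus $6=\pi(10,5)$, which is ACM by Remark~\ref{basic1}, and let $R$ be a rational normal curve of degree $5$ meeting $X_0$ quasi-transversally in $8$ general points, so that
\[
p_a(Y)=p_a(X_0)+p_a(R)+\#(X_0\cap R)-1=6+0+8-1=13 .
\]
This is the situation Lemma~\ref{rnc1} (case $n=5$) is designed for: it produces the rational normal curve $R$ through prescribed points with prescribed tangent directions, which lets me keep the $8$ nodes of $Y$ in sufficiently general position while guaranteeing that $Y$ is connected, nondegenerate and nodal.

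Next I would check that $Y$ is ACM, so that by semicontinuity the smooth curve obtained below is ACM as well. Because $Y$ is reducible its general hyperplane section $Y\cap H$ (a set of $15$ points in $H\cong\PP^4$, namely $(X_0\cap H)\cup(R\cap H)$) is not in uniform position, so I cannot quote the monodromy argument of Lemma~\ref{o1oo1} directly; instead I would control $h^1(H,\Ii_{Y\cap H,H}(t))$ for $t=2,3$ by exploiting the freedom in the position of $R\cap H$ afforded by Lemma~\ref{rnc1}, and then feed this into the sequence \eqref{eqppp2} as in Remark~\ref{pre2}(iii). Equivalently one argues componentwise, from the Mayer--Vietoris sequence
\[
0\to\Oo_Y(t)\to\Oo_{X_0}(t)\oplus\Oo_R(t)\to\Oo_{X_0\cap R}(t)\to 0
\]
together with the ACM-ness of both $X_0$ and $R$, reducing everything to the assertion that the $8$ nodes impose independent conditions in each degree.

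Finally I would show $Y$ is smoothable by verifying $h^1(Y,N_Y)=0$ (the standard smoothing criterion for nodal curves), splitting $N_Y$ over the two components via the usual exact sequences and using $N_R\cong\Oo_{\PP^1}(7)^{\oplus4}$ and the nodal gluing together with the unobstructedness of the extremal curve $X_0$; here $\chi(N_Y)=66=\Xx(15,13,5)$, so the resulting component has the expected dimension. Then $[Y]$ lies on a component $\Hh$ whose general member is a smooth curve $X$ of degree $15$ and genus $13$, and by the openness of the ACM condition and the ACM-ness of $Y$ this general member is ACM, proving the lemma. The main obstacle is the ACM-ness of the nodal model $Y$ — precisely the independence of the conditions imposed by $Y\cap H$ in degrees $2$ and $3$, which is exactly what the tangent-line freedom in Lemma~\ref{rnc1} is there to secure; the normal-bundle vanishing $h^1(N_Y)=0$ needed for the smoothing is the secondary technical point.
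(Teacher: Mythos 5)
Your top-level strategy (exhibit a connected nodal curve of degree $15$ and arithmetic genus $13$, prove it is ACM, smooth it via $h^1(Y,N_Y)=0$, and conclude by semicontinuity/openness of the ACM condition) is exactly the paper's strategy, but your choice of decomposition breaks both steps where the real work happens. The paper takes $Y=C\cup D$ with $D$ an extremal curve of degree $10$ and genus $9$ lying \emph{inside a hyperplane} $H\cong\PP^4$, and $C$ a rational normal curve meeting it in $5$ points with tangent lines prescribed at $3$ of them; you take $X_0$ of genus $6$ nondegenerate in $\PP^5$ and a rational normal curve $R$ through $8$ points of $X_0$. The first concrete failure is your appeal to Lemma \ref{rnc1}: that lemma produces a rational normal curve through $5$ points spanning a hyperplane with general tangent lines at $3$ of them, and says nothing about $8$ points. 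In fact, by Castelnuovo's lemma, through $8$ points in linearly general position in $\PP^5$ there passes a \emph{unique} rational normal curve, so in your configuration $R$ is rigid once the $8$ points are chosen: there is no tangent-line freedom at all. The tool you invoke both to keep the nodes general and, at the end, to "secure the independence of the conditions imposed by $Y\cap H$ in degrees $2$ and $3$" simply does not exist.

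Second, the ACM-ness of $Y$, which you correctly single out as the main obstacle, is left unproven, and it is genuinely harder in your configuration than in the paper's. Because the paper's component $D$ is hyperplanar, it can run the residual exact sequence $0\to\Ii_C(t-1)\to\Ii_Y(t)\to\Ii_{D,H}(t)\to0$ and get $h^1(\Ii_Y(t))=0$ for all $t\ge2$ directly from the ACM-ness of $C$ in $\PP^5$ and of $D$ in $H$ — no general-position statement about the nodes is needed anywhere in that step. With both of your components nondegenerate in $\PP^5$ this trick is unavailable, and your Mayer--Vietoris reduction requires exactly the independence statements (e.g.\ surjectivity of the restriction $H^0(\Ii_{X_0}(2))\to H^0\bigl(R,\Oo_R(2)(-X_0\cap R)\bigr)\cong\KK^3$) that genericity cannot supply, since $R$ is determined by the $8$ points. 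Note also that you have misread where the paper uses the tangent-line freedom: it enters the \emph{smoothing} step, not the ACM step — it makes the positive elementary transformations of $N_D\cong N_{D,H}\oplus\Oo_D(1)$ general enough to kill $h^1$, which is an issue precisely because $h^1(D,\Oo_D(1))=3\neq0$ for the hyperplanar component. Ironically, your $X_0$ is a canonical curve of genus $6$, so $h^1(N_{X_0})=0$ and your smoothing step would be comparatively painless; but that vanishing, which you label "unobstructedness of the extremal curve $X_0$", is also asserted rather than proved.
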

\begin{proof}
Fix a general hyperplane $H\subset \PP^5$, $A\subset H$ with $\#A =5$ spanning $H$ and take $E\subset A$ such that $\#E =3$. We take a rational normal curve $C\subset\PP^5$ satisfying Lemma \ref{rnc1}. Let $D\subset \PP^4=H$ be an extremal curve of degree $10$ and genus $9$. 
We may regard $D$ as a smooth element of a component of reducible Hilbert scheme $\HO{10,9,4}$ of the expected 
dimension $\Xx(10,9,4)=42$. Note that there is another component of $\HO{10,9,4}$ consisting of trigonal curves. Here we take the component other than the trigonal locus; cf. \cite[Theorem 2.2]{KK3}. Indeed $D\in|\Oo_{\FF_1}(4h+6f)|$ is $4$-gonal and $D$ has a plane model 
of degree $6$ with one node and $h^1(D, N_{D,H})=0$ \cite[Proposition 2.4]{cil}, \cite[Theorem 2.2]{KK3}. We may assume such $D$ contains $A$.
We have $h^0(D,\omega _D(-1)) =h^1(D,\Oo_D(1)) =3$. Since $E$ is also general in $D$, we have $h^0(D,\omega_D(-1)(-E)) =0$.
i.e. 
\begin{equation}\label{DE}
h^1(D,\Oo_D(1)(E))=0.
\end{equation} 
Set $Y:= C\cup D$. The reducible curve $Y$ is a connected nodal curve of degree $15$ and arithmetic genus $p_a(Y)=p_a(D)+p_a(C)+\deg(C\cap D)-1=13$. Obviously, $Y$ spans $\PP^5$. 
We want to 
prove that $Y$ is ACM and that $Y$ is smoothable. Indeed, if $Y$ is ACM and smoothable, it is a flat limit of elements of a component to which $Y$ belongs. Since $Y$ is connected, $h^1(\Ii_Y)=0$. Since $C$ is linearly normal in $\PP^5$ and  $D$ is linearly normal in $H$ and $A$ spans $H$, $Y$ is not an isomorphic linear projection of a reducible nodal curve spanning $\PP^6$ and hence $h^1(\Ii_Y(1)) =0$. Thus to prove that $Y$ is ACM it is sufficient to prove that $h^1(\Ii_Y(t)) =0$ for all $t\ge 2$.
\vni{\bf Claim 1:} $h^0(\Ii_Y(t))=0$ for all $t\ge 2$.
\vnii
{\it Proof of Claim 1:}
Consider the residual exact sequence of $H$:
\begin{equation}\label{eqhh2}
0\to \Ii_C(t-1) \to \Ii_Y(t) \to \Ii_{D,H}(t)\to 0
\end{equation}
Since $h^1(\Ii_C(t-1)) =0$, from the long cohomology exact sequence of \eqref{eqhh2}, it is sufficient to prove that
$h^1(H,\Ii_{D,H}(t)) =0$ for all $t\ge 2$. Note that $D\subset\PP^4$ is an extremal curve, hence is ACM and
$h^1(H,\Ii_{D,H}(t)) =0$ for all $t\ge 2$.\qed

\vni{\bf Claim 2:} $h^1(Y,N_Y)=0$ and $Y$ is smoothable.
\vni
{\it Proof of Claim 2:}
Let $N_Y$, $N_C$ and $N_D$ denote the normal bundles of $Y$, $C$ and $D$ in $\PP^5$. Let $N_{D,H}$ denote the normal bundle of $D$ in $H\cong\PP^4$. Note that $
N_D\cong N_{D,H}\oplus \Oo_D(1)$ and that $h^1(D,\Oo_D(1)) =3$. We use \cite[Theorem 4.1]{hh}.
Since $Y$ is nodal with $\mathrm{Sing}(Y)=(D\cap C)=A$, we have an exact sequence
\begin{equation}\label{eqhh3}
0\to N_Y \to N_C^{+}\oplus N_D^{+}\to \Oo_A \to 0
\end{equation}
where $N_C^{+}$ is obtained from $N_C$ by making $5$ positive elementary transformations, one for each $p\in A$
in the direction of the tangent line of $D$ at $p$, and $N_D^{+}$ is obtained from $N_D$ by making $5$ positive elementary transformations, one for each $p\in A$ 
in the direction
of the tangent line of $C$ at $p$; cf. \cite[eq. (1) p. 108]{hh}.  
We have $\mathrm{rank\,}N_Y=\mathrm{rank\,}N_C=\mathrm{rank\,}N_D=4$ and $\mathrm{rank\,}N_{D,H}=3$. 
Since $C$ is a rational normal curve in $\PP^5$, $N_C={\Oo_{\PP^1}}(7)^{\oplus 4}$ and $\deg(N_C) =28$.
 Hence $N_C^{+}$ is the direct sum of $4$ line bundles of degree at least $7$,
we get
the surjectivity of the restriction map $\rho: H^0(C,N_C^{+})\to H^0(A,\Oo_A)$ and $h^1(C,N_C^{+}) =0$.

Since $\rho$ is surjective and $h^1(C,N_C^{+}) =0$, from the long cohomology exact sequence of \eqref{eqhh3},
it is sufficient to show $h^1(D,N_D^{+}) =0$ to conclude $h^1(Y,N_Y)=0$.
By \cite[Theorem, Proposition 2.4]{cil}, we have $h^1(D,N_{D,H})=0$, hence $h^1(D,N_{D}) =h^1(N_{D,H}\oplus \Oo_D(1))=h^1(D,\Oo_D(1)) =3$. Recall \eqref{DE}, $h^1(D,\Oo_D(1)(E)) =0$.
Let $G_E$ be a vector bundle on $Y$ obtained from $N_{D,H}\oplus \Oo_D(1)$ making $3$ general positive elementary transformations, one for each point of $E$, thus $h^1(D,{G_E}_{|D})=0$  by $h^1(D,\Oo_D(1)(E)) =0$.
Note that the vector bundle  $N_D^+$ is obtained from $G_E$ making $2$ further positive elementary transformations, one for each point of $A\setminus E$. Hence $h^1(D,N_D^+)=0$ and
$Y$ is smoothable by \cite[Theorem 4.1]{hh}
\end{proof}
\begin{lemma}\label{1411} There is a component of $\HO{14,11,5}$ whose general element is ACM.
\end{lemma}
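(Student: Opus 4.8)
The plan is to mimic the proof of Lemma \ref{1513}, constructing a reducible nodal curve $Y = C \cup D$ that is both ACM and smoothable, so that $Y$ is a flat limit of elements of a component of $\HO{14,11,5}$ whose general member is then forced to be ACM (since ACM-ness is an open condition and $h^1(\Ii_Y(t))=0$ is preserved under small deformation). The numerical bookkeeping must now target degree $14$ and genus $11$: if I again take $C$ to be a rational normal curve of degree $5$ in $\PP^5$ and $D\subset H\cong\PP^4$ a curve meeting $C$ in a set $A$ with $\#A=5$ spanning $H$, then $\deg Y = 5 + \deg D = 14$ forces $\deg D = 9$, and $p_a(Y) = p_a(D) + p_a(C) + \#A - 1 = p_a(D) + 0 + 5 - 1 = 11$ forces $p_a(D) = 7$. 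So the key input is a nice curve $D\subset \PP^4$ of degree $9$ and genus $7$.

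First I would identify an appropriate $D$. Since $\pi(9,4)=\pi(d,r)$ for $(d,r)=(9,4)$ gives genus too small for $D$ to be extremal (an extremal curve of degree $9$ in $\PP^4$ has genus $\pi(9,4)=7$, which matches exactly), I expect $D$ to be an \emph{extremal} curve of degree $9$ and genus $7$ in $\PP^4$, hence ACM by Remark \ref{basic1}(i), lying on a surface of minimal degree. I would pick $D$ on a rational normal scroll (say $\FF_1$ embedded appropriately) so that $D$ is $4$-gonal or trigonal with $h^1(D,N_{D,H})=0$, citing the relevant unobstructedness results \cite{cil}, \cite{KK3} as in the degree-$10$ case. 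I then choose $D$ to contain $A$ with the three distinguished points $E\subset A$, $\#E=3$, general on $D$, so that the vanishing $h^1(D,\Oo_D(1)(E))=0$ holds. The requisite Hodge-theoretic input is $h^1(D,\Oo_D(1)) = h^1(D,\omega_D(-1))$: here $\deg\Oo_D(1)=9$ and $g=7$ give $h^1(D,\Oo_D(1)) = g - \deg + r = 7 - 9 + \dim|\Oo_D(1)|$; by Riemann--Roch $h^0(\Oo_D(1)) = 9+1-7 + h^1 = 3 + h^1$, and linear normality in $\PP^4$ gives $h^0=5$, so $h^1(D,\Oo_D(1))=2$. Thus I only need $\#E=2$ general positive elementary transformations to kill this $h^1$, so I would take $E\subset A$ with $\#E=2$ rather than $3$, adjusting the invocation of Lemma \ref{rnc1} (which produces a rational normal curve $C$ through $A$ with prescribed tangent directions at three points) accordingly.

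With $C$ and $D$ in hand, the structure of the argument is identical to Lemma \ref{1513}. For ACM-ness I would establish $h^1(\Ii_Y(1))=0$ (via linear normality of both components and spanning, so $Y$ is not an isomorphic projection from $\PP^6$) and $h^0(\Ii_Y(t))=h^1(\Ii_Y(t))=0$ for $t\ge 2$ using the residual exact sequence
\begin{equation*}
0\to \Ii_C(t-1)\to \Ii_Y(t)\to \Ii_{D,H}(t)\to 0,
\end{equation*}
together with $h^1(\Ii_C(t-1))=0$ (as $C$ is a rational normal curve, hence ACM) and $h^1(H,\Ii_{D,H}(t))=0$ for $t\ge 2$ (as $D$ is extremal, hence ACM). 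For smoothability I would invoke \cite[Theorem 4.1]{hh} via the normal-bundle sequence
\begin{equation*}
0\to N_Y\to N_C^{+}\oplus N_D^{+}\to \Oo_A\to 0,
\end{equation*}
where $N_C = \Oo_{\PP^1}(7)^{\oplus 4}$ makes $N_C^{+}$ a sum of line bundles of degree $\ge 7$ with $h^1(C,N_C^{+})=0$ and surjective evaluation onto $\Oo_A$, reducing the problem to $h^1(D,N_D^{+})=0$; this last vanishing follows from $N_D\cong N_{D,H}\oplus\Oo_D(1)$, the unobstructedness $h^1(D,N_{D,H})=0$, and the elementary-transformation argument that uses precisely the vanishing $h^1(D,\Oo_D(1)(E))=0$ for the $2$ points of $E$.

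The main obstacle I anticipate is securing a curve $D\subset\PP^4$ of degree $9$ and genus $7$ with all three required properties simultaneously: extremality (hence ACM, which drives Claim 1), unobstructedness $h^1(D,N_{D,H})=0$ (which drives the smoothability step), and the flexibility to impose passage through a general spanning set $A$ with $E\subset A$ general. An extremal curve of degree $9$, genus $7$ in $\PP^4$ lies on a surface of minimal degree (a cubic scroll or a cone), and I must confirm from the classification and the cited results \cite{cil}, \cite{KK3} that the chosen component of $\HO{9,7,4}$ has expected dimension $\Xx(9,7,4)$ and that its general member is unobstructed; verifying that $h^1(D,N_{D,H})=0$ for the specific scrollar model, rather than merely for the trigonal locus, is the delicate point, exactly as in the degree-$10$ case where the authors were careful to select the non-trigonal component.
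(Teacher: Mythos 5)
Your proposal is correct and is essentially the paper's own proof: the paper likewise takes $Y=C\cup D$ with $C$ a rational normal curve meeting, in $5$ points, an extremal curve $D\subset H\cong\PP^4$ of degree $9$ and genus $7$ (trigonal, $D\in|\Oo_{\FF_1}(3h+6f)|$, ACM, with $h^1(D,N_{D,H})=0$ by \cite[Proposition 2.4]{cil}), and establishes the same two claims via the same residual exact sequence and the same Hartshorne--Hirschowitz elementary-transformation argument. The only notable differences are that the paper dispenses with Lemma \ref{rnc1} and prescribed tangent directions altogether --- since $h^1(D,\Oo_D(1))=2$ and $C$ is transversal to $H$, the five elementary transformations at $S=C\cap D$ already yield $h^1(D,\Oo_D(1)(S))=0$, as $\deg \Oo_D(1)(S)=14>2g-2$ --- and that your anticipated obstacle is vacuous: $\HO{9,7,4}$ is irreducible of the expected dimension $39$ (\cite[Theorem 2.1]{KKy2}), so there is no non-trigonal component to select, and the unobstructedness $h^1(D,N_{D,H})=0$ holds for its trigonal extremal general member.
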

\begin{proof}
Take a rational normal curve $C\subset \PP^5$. Take a general hyperplane $H\subset \PP^5$ transversal to $C$. Set $S:=C\cap H$. Take $D'\subset H'\in{\PP^5}^*, H\neq H'$, $D'\subset H'$ is an extremal curve  with degree $9$ and genus $7$ in $\PP^4$, $S'\subset D'$, with $\#S'=5$ which is in general position. 
Choose  $\tau\in\Aut(\PP^4)$ such that $\tau (S')=S$. We have $\tau(D')=D$ contains 
$S$ and $C\cap D=S$. We may regard $D$ as a smooth element of irreducible Hilbert scheme $\HO{9,7,4}$ of the expected 
dimension $\Xx(9,7,4)=39$; cf. \cite[Theorem 2.1]{KKy2}. We have $D\in|\Oo_{\FF_1}(3h+6f)|$ is trigonal  and $h^1(D, N_{D,H})=0$ \cite[Proposition 2.4]{cil}.

The reducible curve $Y:= C\cup D$
is connected, nodal, $\deg Y=14$, $p_a(Y)=p_a(C)+p_a(D)-\deg (C\cap D)-1=11$. Moreover, $Y$ spans $\PP^5$. Since $Y$ is connected, $h^1(\Ii_Y)=0$. Since $C$ is linearly normal in $\PP^5$ and $D$ is linearly normal in $H$, $Y$ is not a isomorphic projection of a curve spanning $\PP^6$. Hence $h^1(\Ii_Y(1)) =0$. 

\vni{\bf  Claim 1:} $h^0(\Ii_Y(t))=0$ for all $t\ge 2$; the proof is same as the proof of Claim 1 in Lemma \ref{1513} using the exact sequence \eqref{eqhh2}.

\vni{\bf Claim 2:} $h^1(Y,N_Y)=0$ and $Y$ is smoothable.
\vnii
{\it Proof of Claim 2:} Claim 2 finishes the proof of Lemma \ref{1411} by semicontinuity for cohomology.
Let $N_Y$, $N_C$ and $N_D$ denote the normal bundle of $Y$, $C$ and $D$ in $\PP^5$. Let $N_{D,H}$ denote the normal bundle of $D$ in $H$. Note that $N_D\cong N_{D,H}\oplus \Oo_D(1)$ and that $h^1(D,\Oo_D(1)) =2$. Since $Y$ is nodal with $S =D\cap C$, we have an exact sequence (same as \eqref{eqhh3})
\begin{equation}\label{eqhh4}
0\to N_Y \to N_C^{+}\oplus N_D^{+}\to \Oo_S
\to 0,
\end{equation}
where $N_C^{+}$ is obtained from $N_C$ making $5$ positive elementary transformation, one for each $p\in S$, in the direction of the tangent line to $D$ at $p$, and $N_D^{+}$ is obtained from $N_D$ making $5$ positive elementary transformations, one for each $p\in S$  in the direction
of the tangent line to $C$ at $p$. \, Since $C$ is rational, $N_C$ splitts 
and
$N_C=\Oo_{\PP^1}(5+2)^{\oplus 4}$.  
Thus, $N_C^{+}$ is the direct sum of $4$ line bundles of degree at least $7$, $h^1(C,N_C^{+}) =0$ and we get
the surjectivity of the map $\rho: H^0(C,N_C^{+})\to H^0(S,\Oo_S)$ and $h^1(C,N_C^{+}) =0$. 
Since $\rho$ is surjective and $h^1(C,N_C^{+}) =0$, from the long cohomology exact sequence of \eqref{eqhh4},  it is sufficient to show  $h^1(D,N_D^{+}) =0$ to conclude $h^1(Y,N_Y)=0$. 

By \cite[Theorem, Proposition 2.4]{cil} or \cite{KKy2}, we have $h^1(D,N_{D,H})=0$, hence $h^1(D,N_{D}) =h^1(N_{D,H}\oplus \Oo_D(1))=h^1(D,\Oo_D(1)) =2$. Since $C$ is transversal to $H$, $\#S=5\ge 2$, we have $h^1(D,\Oo_D(1)(S)) =0$. Thus, $h^1(D,N_D^{+}) =0$ as desired. \end{proof}
\vspace{-12pt}
\section{Hilbert functions of smooth curves of degree $d=15$ in $\PP^5$ }
In this section we study Hilbert functions of smooth curves of degree $d=15$ in $\PP^5$.
Several important properties of $\HO{15,g,5}$ such as irreducibility, the description of a general fibre of the moduli map, the gonality of a general element in each irreducible components have been studied in 
 \cite{bumi24}, \cite{edinburgh}. Thus we {\it concentrate} on deriving the Hilbert functions.
We denote by $\HL{d,g,r}$ the subscheme of $\HO{d,g,r}$ consisting of components whose general element is linearly normal. 

\begin{theorem}\label{d=15}
\begin{itemize}
\item[\rm{(i)}] Every $X\in\HO{15,18,5}$ is ACM and has maximal rank, with the Hilbert function given by  \eqref{exthil}.
\item[\rm{(ii)}] $\HO{15,17,5}=\emptyset$.
 \item[\rm{(iii)}] $\HO{15,16,5}$ has three irreducible components $\Gamma_i$, $i=1,2,3$ described as follows.
\item[\rm{(1)}] A general $X\in\Gamma_i$, $i=1,2$ lies on a rational normal surface scroll:~ 
\[
h^0(\PP^5, \Ii_X(t))=\begin{cases}
0: ~t=1\\
6: ~t=2\\
28: ~t=3\\
\binom{t+5}{5} -15t+15: t\ge 4
\end{cases}
h^1(\Ii_X(t))=\begin{cases}
0: ~1\le t\le2\\
2: ~t=3\\
0: ~t\ge 4
\end{cases}
\]
\item[\rm{(2)}] A general $X\in\Gamma_3$ is ACM, which is a complete intersection of quintic surface and a cubic hypersurface; 
\[
h^0(\PP^5, \Ii_X(t))=\begin{cases}
0: ~t=1\\
5: ~t=2\\
\binom{t+5}{5} -15t+15: t\ge 3.
\end{cases}
\]
\item[\rm{(iv)}] A general $X\in\HO{15,15,5}$ is ACM, $h^0(\PP^5,\Ii_X(1))=0$ and $h^0(\Ii_X(t)) =\binom{t+5}{5} -15t+14$ for all $t\ge 2$. 
\item[\rm{(v)}] $\HO{15,14,5}=\HL{15,14,5}$ is reducible with two components $\Hh_i$  $(i=1,2)$ of the expected dimension. A general $X\in\Hh_i$ ($i=1,2$) is ACM. 
\item[\rm{(vi)}] $\HO{15,13,5}$ is reducible with two components $\h{H}_1$ and $\h{H}_2$:

$\h{H}_1=\HL{15,13,5}$ and $\dim \HL{15,13,5}=\h{X}(d,g,r)=66$.  A general $X\in\Hh_1$ is ACM and lies on a smooth rational surface of {degree  $10$.}

$\dim \h{H}_2=68>\h{X}(d,g,r),$ a general $X\in\h{H}_2$ is trigonal which is the image of external projection of extremal curves of degree $15$  in $\PP^6$.
For a general $X\in\Hh_2$, 
\[
h^0(\PP^5, \Ii_X(t))=\begin{cases}
0: ~t=1\\
\binom{t+5}{5} -15t+12: t\ge 2
\end{cases}
h^1(\Ii_X(t))=\begin{cases}
1: ~t=1\\
0: ~t\ge 2\\
\end{cases}
\]
\item[\rm{(vii)}] If $g\le 12$, $\HO{15,g,5}$ is irreducible. For a general $X\in\HO{15,g,5}$, 
$$h^0(\PP^5,\Ii_X(t))=\begin{cases}\binom{5+t}{t}-(15t+1-g)~ \mathrm{ if } ~ t\ge3\\
\max\{0,g-10\}~\mathrm { if } ~ t=2\\
0~\mathrm { if } ~t=1
\end{cases}$$
\end{itemize}
\end{theorem}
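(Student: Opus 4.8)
The plan is to handle the seven cases in descending genus, stratifying by where $g$ sits relative to the Castelnuovo bounds $\pi(15,5)=18$ and $\pi_1(15,5)=16$. Case (i) is immediate: since $18=\pi(15,5)$, every $X\in\HO{15,18,5}$ is an extremal curve, so Remark~\ref{basic1} gives at once that $X$ is ACM, of maximal rank, with Hilbert function \eqref{exthil}. For (ii) I would first record from Remark~\ref{cone1} that a degree-$15$ curve on a surface of minimal degree has genus in $\{0,10,16\}$: the Veronese and cone cases are excluded ($15$ is odd and $15\notin\{12,13\}$), and the smooth-scroll formula \eqref{sg} together with $\deg X=4a+b=15$ forces $(a,b)=(3,3)$, whence $g=16$. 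Since $g=17>16=\pi_1(15,5)$, a hypothetical genus-$17$ curve would have to lie on a minimal surface, which is impossible; hence $\HO{15,17,5}=\emptyset$. The one external input is the value $\pi_1(15,5)=16$, which I take from \cite{he}.

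For the intermediate genera (iii)–(v) the strategy is uniform: list the low-degree surfaces that can carry the curve, show every component's general member lies on one of them, compute Hilbert functions from the surface exact sequences, and separate and count components by gonality/scrollar invariants and by dimension. For $g=16=\pi_1$ the candidates are the two smooth rational normal scrolls $\FF_0,\FF_2$ of degree $4$ and the quintic del Pezzo $T\cong\PP^2_4$. Using \eqref{F0} and \eqref{F2} I solve for the admissible classes ($(3,9),(5,5)$ on $\FF_0$ and $(3,12),(5,10)$ on $\FF_2$, and $X\in|3H|$ on $T$), and use Lemma~\ref{maroni} and Proposition~\ref{z5} to see that the scroll curves split into a trigonal family $\Gamma_1$ and a gonality-$5$ family $\Gamma_2$, while the del Pezzo curves form $\Gamma_3$. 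The scroll cohomology then yields the Hilbert function in (iii)(1) (the scroll imposes $6$ quadrics and the genus forces $h^1(\Ii_X(3))=2$), and Remark~\ref{delpezzo1} yields (iii)(2), also showing $X\in\Gamma_3$ is ACM and a quintic-times-cubic complete intersection. To see these are \emph{all} the components I would invoke Lemma~\ref{o1oo1} and Remark~\ref{aa1}: a general hyperplane section is forced onto a rational normal or an elliptic normal curve, hence the curve lies on a scroll or a del Pezzo. The same scheme runs (iv) and (v): for $g=15$ inequality \eqref{nofull} fails ($15<77/5$), so the del Pezzo locus fills a component whose general member is ACM with the stated Hilbert function; for $g=14$ inequality \eqref{nofull} holds ($15>74/5$), the del Pezzo locus is too small to be a component, and the two components of expected dimension $\Xx(15,14,5)$ must instead be produced by nodal-smoothing constructions of ACM curves in the spirit of Lemmas~\ref{1513} and \ref{1411}, with linear normality ($\HO=\HL$) following from the absence of any degree-$15$, genus-$14$ projection family.

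For (vi) I would exhibit the two components separately. The linearly normal ACM component $\Hh_1=\HL{15,13,5}$ is precisely the output of Lemma~\ref{1513} (general member ACM, lying on a smooth degree-$10$ rational surface, dimension $\Xx(15,13,5)=66$). The second component $\Hh_2$ consists of linear projections of extremal curves of degree $15$ in $\PP^6$: since $\pi(15,6)=13$, these extremal curves have genus $13$ and are trigonal, and Lemma~\ref{oo1} gives $h^1(\Ii_X(1))=1$ and $h^1(\Ii_X(t))=0$ for $t\ge 2$, producing the stated Hilbert function; the extra modulus from the projection centre makes $\dim\Hh_2=68>66$, so $\Hh_2\neq\Hh_1$. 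Finally (vii) is a direct application of Remark~\ref{lars}: for $g\le 12$ one has $\rho(15,g,5)=60-5g\ge 0$, the moduli map is dominant, a general member has general moduli and maximal rank by \cite{l8}, and \eqref{hil} specializes to the three-line formula (in particular $h^0(\Ii_X(2))=\max\{0,g-10\}$); the irreducibility in this Brill--Noether range I would cite from the existing literature.

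The main obstacle is the completeness half of (iii)–(v), namely proving that the surface-loci I construct are literally \emph{all} the irreducible components. This rests on three delicate points: (a) the exact value $\pi_1(15,5)=16$, needed to control the boundary genera; (b) a careful monodromy/uniform-position argument (through Lemma~\ref{o1oo1} and Remark~\ref{aa1}) showing that a general hyperplane section always lies on a rational or elliptic normal curve, and hence that the curve lies on one of the enumerated surfaces; and (c) matching each surface family to a component of exactly the expected dimension. The subtlest instance is $g=14,15$, where the del Pezzo dimension count \eqref{nofull} changes sign right at $d=15$ and thereby decides whether the del Pezzo family is itself a component or only a proper sublocus of one.
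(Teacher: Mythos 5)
Your proposal diverges from the paper in a way that matters: the paper's proof of Theorem \ref{d=15} does not establish the component structure at all --- the irreducibility statements, the number of components and their descriptions are imported wholesale from \cite{edinburgh} and \cite{bumi24}, and the written proof is devoted exclusively to the Hilbert-function and ACM claims. You instead propose to derive the classification of components yourself by enumerating surfaces, and this is where the genuine gaps lie. You flag the ``completeness'' problem as the main obstacle, but it is not a loose end one can wave at: for (v), the assertion that $\HO{15,14,5}$ has exactly two components of the expected dimension is the main theorem of \cite{bumi24}, and your plan to ``produce'' them by nodal smoothings ``in the spirit of'' Lemmas \ref{1513} and \ref{1411} cannot prove it --- those lemmas concern $(d,g)=(15,13)$ and $(14,11)$, no analogous construction for $(15,14)$ is proposed, and in any case exhibiting smoothable ACM curves only produces \emph{some} components; it neither bounds their number nor shows that a general member of \emph{each} of the two components is ACM. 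Likewise in (vi) you over-attribute to Lemma \ref{1513}: that lemma only gives existence of a component whose general element is ACM, while the identification of this component with $\HL{15,13,5}$, its dimension $66$, and the smooth degree-$10$ surface all come from \cite[Prop.\ 3.2]{edinburgh}, which the paper uses alongside the very-ampleness of $(5;1^{15})$ on $\PP^2_{15}$.

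There is also a concrete failure point inside the part the paper does prove. In (iv) your scheme is ``exclude the low-degree surfaces via Lemma \ref{o1oo1} and Remark \ref{aa1}'', but for $g=15$ the elliptic-normal-curve alternative of Lemma \ref{o1oo1} \emph{cannot} be excluded: a general $X\in\HO{15,15,5}$ really does lie on a smooth del Pezzo $T$ with $X\in|(8;3,2^3)|$, and its general hyperplane sections lie on elliptic normal quintics. At $t=3$ one must therefore prove $h^1(\Ii_X(3))=0$ by computing on $T$; the paper does this via Kawamata--Viehweg vanishing, $h^1(T,\Ii_{X,T}(3))=h^1(T,\omega_T\otimes(4;1,2^3))=0$ since $(4;1,2^3)$ is nef and big \cite{sandra}, combined with Lazarsfeld's surjectivity of $H^0(\PP^5,\Oo(3))\to H^0(T,\Oo_T(3))$ \cite{laz}. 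Nothing in your proposal supplies this step, and without it the induction giving ACM-ness in (iv) does not close; asserting that the del Pezzo component is ACM because \eqref{nofull} fails is a non sequitur, since \eqref{nofull} is only a dimension count. The same omission recurs in (v), where after reducing to a quintic surface one must rule out not just smooth del Pezzos via \eqref{nofull} but also cones over elliptic and rational quintics, singular del Pezzos, and projected surfaces (the paper's cases (a)--(d)). By contrast, parts (i) and (vii) of your proposal are correct and match the paper (Remark \ref{basic1}; Remark \ref{lars} with \cite{l8}), and your argument for (ii) via $\pi_1(15,5)=16$ and Remark \ref{cone1} is a legitimate self-contained alternative to the paper's citation of \cite[Prop.\ 6.1]{edinburgh} --- modulo the small slip that the genus list for smooth degree-$15$ curves on quartic surfaces is $\{0,10,16,18\}$ rather than $\{0,10,16\}$ (you omit the extremal class $(a,b)=(4,-1)$), which does not affect the conclusion for $g=17$.
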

\begin{proof} The irreducibility statements and description of components of $\HO{15,g,5}$ are main results of 
\cite{edinburgh, bumi24}, which we include in the statement of the theorem for the convenience of readers. Therefore we determine $h^1(\PP^5,\Ii_X(t))$ and the Hilbert function $h^0(\PP^5,\Ii_X(t))$.

\vni
(i) $\HO{15,18,5}$ is irreducible and $4$-gonal by \cite[Prop. 6.2]{edinburgh}. 
Every $X\in\HO{15,18,5}$ is an extremal curve and is ACM by  Remark \ref{basic1}. In \eqref{exthil}, we have $h^1(X,\Oo_X(1))=8, h^1(X,\Oo_X(2))=2, h^1(X,\Oo_X(t))=0, \forall t\ge 3$ by routine computation. 
\vni
(ii) $\HO{15,17,5}=\emptyset$ by \cite[Prop. 6.1]{edinburgh}.

\vni
(iii) (1) \cite[Lemma 5.6]{edinburgh} gives the estimate of $h^1(\Ii_X(t))$ for $t\neq 2$. To see $h^1(\Ii_X(2)) =0$, it is enough to  check $h^0(\PP^5,\Ii_X(2))=6$. Fix any surface $S\supset X$ such that $\deg (S) \le 5$; cf. \cite[Theorem 5.1]{edinburgh}. Let $M\subset \PP^5$ be a quadric hypersurface containing $X$. If $S\nsubseteq M$ then $\deg (M\cap S)\le 10<\deg X$, thus 
$|\Ii_X(2)| =|\Ii_S(2)|$. To compute $\dim|\Ii_S(2)|$, we take a general hyperplane $H\subset \PP^5$. We may assume $S$ is a smooth quartic surface; cf. \cite[Theorem 5.1; (1), (2)]{edinburgh}. We set $R:= S\cap H$, a rational normal ACM curve in $H$, i.e. $h^1(H,\Ii_{R,H}(t)) =0$  and  $h^0(H,\Ii_{R,H}(t)) = \binom{4+t}{4}-(4t+1)$ $\forall t\in \NN$.
For $t\in\NN$, we consider the exact sequence 
\begin{equation}\label{eqtnew2}
0 \to \Ii_S(t-1) \to \Ii_S(t)\to \Ii_{R,H}(t) \to 0.
\end{equation} 
Since $S$ is ACM \cite[Theorem 1.3.3]{juan}, 
$h^1(\PP^5, \Ii_S(t-1)) =0$ for all $t\in\NN$ and 
$h^0(\PP^5, \Ii_S(1))=0$.  The long cohomology exact sequence of \eqref{eqtnew2}  gives
$h^0(\PP^5,\Ii_X(2))=h^0(\PP^5,\Ii_S(2)) = h^0(H,\Ii_{R,H}(2))=6$.
\vni
\quad(2)  For a general $X\in\Gamma_3$, the statement follows from \cite[Lemma 5.10 (ii)]{edinburgh}.

\vskip 2pt
\vni
(iv) The irreducibility of $\HO{15,15,5}$ is shown in \cite[Theorem 4.1]{edinburgh}. Since $\pi(15,6)=13 <g=15$, $X$ is linearly normal, $h^0(\PP^5,\Ii_X(1))=h^1(\PP^5,\Ii_X(1))=0$. Since $2\deg(X)>\deg(K_X)$, $h^1(X,\Oo_X(t)) =0$ for all $t\ge 2$ and hence 
$$h^0(\Ii_X(t)) =\binom{t+5}{5} -15t+14 \mathrm{~~ if ~ and ~ only ~ if ~~} h^1(\Ii_X(t))=0.$$ 

Let $H\subset \PP^5$ be a general hyperplane. 

\vni
\quad(iv-I) In this step we show $h^1(\Ii_X(2)) =0$. Assume $h^1(\Ii_X(2)) >0$, i.e. assume $h^0(\Ii_X(2))=h^0(\Oo_{\PP^5}(2))-h^0(X,\Oo_X(2))+h^1(\Ii_X(2))\ge 6$. Since $X$ is linearly normal, $h^0(\PP^5,\Ii_X(2)) = h^0(H,\Ii_{X\cap H,H}(2))\ge 6$ by Remark \ref{pre2}\,(iv).
By Remark \ref{aa1}\,(i), $X\cap H$ is contained in a rational normal curve $C\subset H$. Since 
$$\#(X\cap H)>2\deg C,$$ every quadric in $H$ containing $X\cap H$ 
contains $C$.
Since $h^0(H,\Ii_{C,H}(2)) =\binom{4+2}{2}-h^0(C,\Oo_C(2))=6$ and $C$ is completely cut out by quadrics, $C$ is the base locus of $|\Ii_{X\cap H,H}(2)|$. 
Thus the intersection of quadrics containing $X$ meets $H$ exactly in $C$. It then follows that  the intersection of quadrics containing $X$ is a surface $T$ whose general hyperplane section is a rational normal curve and therefore $\deg T=4$. However, this is not possible under our current situation $(d,g)=(15,15)$ by Remark \ref{cone1}-(0), (1b), (2c).

\vni
\quad(iv-II) Now assume $t>2$. By Remark \ref{pre2}\,(3), $h^1(H,\Ii_{X\cap H,H}(t)) =0$ for all $t\ge 4$. 
Assume that $h^1(H,\Ii_{X\cap H,H}(3))> 0$. By Lemma \ref{o1oo1}, $X\cap H\subset C$ where 

(i) $C\subset H$ is a rational normal curve or 

(ii) a linearly normal curve of arithmetic genus $1$. 

\vnii
If $\deg C=4$, we have $X\subset T$ with $\deg T=4$. Then by the same routine as we did in the latter part of the previous step (iv-I) above, we may conclude that this does not happen. 

The case $C$ being a linearly normal elliptic curve may occur.
Indeed, by 
 \cite[Theorem 4.1 \& the last part (Conclusion) of its proof]{edinburgh}, a general $X\in\HO{15,15,5}$ lies on a smooth del Pezzo surface $T\cong\PP^2_4$ and $X\in|(8;3,2^3)|$. Set $L:=(3; 1^4)=-\omega_T$.
\begin{align*}h^1(T,\Ii_{X,T}(3))&=h^1(T,\Oo_T(3L-X)) \\&=h^1(T,\Oo_T((1;0,1^3))=h^1(T,\omega_T\otimes(4;1,2^3)).
\end{align*} 
Since $\Oo_T(4;1,2^3)$ is nef and big \cite[Prop. 3.2, Th. 3.4]{sandra}, by Kawamata-Vieweg vanishing theorem, we have $h^1(T,\Ii_{X,T}(3))=0$. Thus,  the restriction map $H^0(T,\Oo_T(3))\stackrel{\rho_{T,X,3}}{\to} H^0(X,\Oo_X(3))$
is surjective. By \cite{laz}, $H^0(\PP^5, \Oo(3))\stackrel{\rho_{T,3}}{\to}  H^0(T,\Oo_T(3))$ is also surjective , thus $\rho_{X,3}=\rho_{T,X,3}\circ\rho_{T,3}: H^0(\PP^5, \Oo(3))\to H^0(X,\Oo_X(3))$ is surjective and we get $h^1(\Ii_X(3))=0$. 

Steps (iv-I) \& (iv-II), induction on $t$ and the long cohomology sequence of the sequence \eqref{eqppp2} give
$h^1(\Ii_X(t)) =0$, $\forall t\ge 1$.
 \vnii
(v) By \cite[Theorem 1.1]{bumi24}, $\HO{15,14,5}$ is reducible with two components of dimension $\Xx (15,14,5)$. Take $X\in\HO{15,14,5}$ which is general in a component of $\HO{15,14,5}$. Since $\pi(15,6)=13 <g=14$, $X$ is linearly normal and $h^1(\Ii_X(1))=0$. Since $2\deg(X)>\deg(K_X)$, $h^1(X,\Oo_X(t)) =0$ for all $t\ge 2$. Hence for $t\ge 2$, $h^0(\Ii_X(t)) =\binom{t+5}{5} -15t+13$ if and only if $h^1(\Ii_X(t))=0$. Let $H\subset \PP^5$ be a general hyperplane. 

\vni
\quad(v-I) We first show that $h^1(\Ii_X(2)) =0$.

Assume $e:= h^1(\Ii_X(2))>0$, i.e., assume $h^0(\Ii_X(2)) = 4+e\ge 5$. Since $X$ is non-degenerate and linearly normal, the restriction map $H^0(\Ii_X(2)) \to H^0(H,\Ii_{X\cap H,H}(2))$ is an isomorphism by Remark \ref{pre2}\,(iv).

\vnii
\noindent
\quad(v-I-i) If $e\ge 2$, we have  $h^0(\Ii_X(2)) \ge 6$.  We then may proceed exactly as in the previous part (iv-I),  to conclude that $X\subset T$, $\deg T=4$. However, this is not possible under our current situation $(d,g)=(15,14)$ by Remark \ref{cone1}-(0), (1b), (2c).
\vnii
\quad(v-I-ii)
If $e=1$, by  Remark \ref{aa1} (ii), $X\cap H$ is contained in a curve $C$ of degree $\le 5$ cut out by $|\Ii_{X\cap H,H}(2)|$. Varying $H$, we get that $X$ is contained in a surface $T$ with $\deg(T)\in \{4,5\}$. 
The case $\deg(T)=4$ can be ruled out again by the same way as the previous case $e\ge 2$. 

If $X\subset T\subset\PP^5$, $\deg T=5$,  we assume $T\cong\PP^2_4$ is smooth. 
For $(d,g)=(15,14)$, $L=(8;3^2,2,1)$ satisfies \eqref{delPC}. Thus we are sure that there is a smooth curve with $(d,g)=(15,14)$ lying on a smooth del Pezzo $T\subset\PP^5$. However, by \eqref{nofull}, the family of smooth
curves $X$ lying on a smooth del Pezzo does not constitute a full component.
However, 
it is possible that $X$ may lie on a quintic surface $T$, which can be either 
\begin{itemize}
\item[(a)] a cone over a smooth quintic elliptic curve in $\PP^4$

\item[(b)] a del Pezzo surface with possibly with finitely many isolated double points

\item[(c)] an image of a projection into $\PP^5$ of a surface $\hat{T}\subset\PP^6$ of minimal degree $5$
with centre of projection outside $\hat{T}$. 
\item[(d)] a cone over a rational quintic curve(either smooth or singular) in $\PP^4$.
\end{itemize}

\vnii By \cite[Proposition 2.1, Lemma 5.12]{edinburgh} and Remark \ref{delpezzo1}\,(ii), curves such as (a), (b) do not form a full component.
Case (c) is not possible since our curve $X\subset\PP^5$ is linearly normal. In the case
(d), $T$ is 
the image of a linear projection of a cone $\w{T}\subset\PP^6$  over a rational normal curve $\w{C}\subset\w{H}\cong\PP^5$ with center of projection $p\in\w{H}\setminus\w{C}$. 
This is not 
possible under the existence of a linearly normal  $X\subset T$. 

\vnii
\quad(v-II) By Remark \ref{pre2}, $h^1(H,\Ii_{X\cap H,H}(t)) =0$ for all $t\ge 4$.
For $t=3$, we argue $h^1(H,\Ii_{X\cap H,H}(3)) =0$.
Assume that $h^1(H,\Ii_{X\cap H,H}(3))> 0$. By Lemma \ref{o1oo1}, $Y\cap H\subset C$, where 
$C\subset H$ is a rational normal curve or a linearly normal curve of arithmetic genus $1$. If $\deg C=4$, we have $X\subset T$ with $\deg T=4$. Then by the same routine as we did in (iv-I) -- which we omit -- we may conclude that this does not occur. If $\deg C=5$, we are in a similar situation to the case (v-I-ii) ($e=1$). Therefore, our curve $X$ lies on a quintic surface and  conclude that such a family does not constitute a full component of $\HO{15,14,5}$ by the same argument.

Thus steps (v-I) \& (v-II), induction on $t$ and the long cohomology sequence of the sequence \eqref{eqppp2} gives
$h^1(\Ii_X(t)) =0$, $\forall t\ge 1$.
 \vnii
(vi) The reducibility of $\HO{15,13,5}$ was shown in \cite[Proposition 3.2]{edinburgh}. The proof of \cite[Proposition 3.2]{edinburgh} gives a description of a general $X\in \Hh_1$; $X$ is isomorphic to the normalization of a nodal plane degree $9$ curve with $15$ ordinary nodes as its only singularities. A general element of $\Hh_1$ is linearly normal, while
every element of the other $\Hh_2$ is an isomorphic  linear projection of an extremal curve $Y\subset \PP^6$ from some $o\in \PP^6$  not contained in the secant variety of $Y$.

Take a general $X\in \Hh_2\subset \Hh_{15,13,5}$.
Since $2\deg(X) >\deg K_X$, we have $h^1(\Oo_X(t)) =0$ for all $t\ge 2$. 
$X\subset\PP^5$ is an external projection of a smooth curve $Y\subset\PP^6$, $\deg Y=15$, which is an extremal curve $\pi(15,6)=13=g$ contained in a quintic surface $S_1\subset\PP^6$. 
Since $\pi(15,7)=10<g=13$, 
$h^0(\Oo_X(1)) =7$ and $h^1(\Ii_X(1))=1$. By an elementary computation for the degree and genus formula for a {\it smooth} curve $Y$ lying on a minimal degree surface in $\PP^6$--similar to \eqref{conevertex1}, \eqref{cone} (Remark \ref{cone1}) -- we may conclude $S_1$
is smooth, {\it not a cone} and $S_1\cong\FF_1$ by the general choice of $Y\subset\PP^6$.
By Lemma \ref{oo1}, we have $h^1(\Ii_X(t))=0$ for all $t\ge2$.

For a general $X\in\Hh_1$, we proceed as follows. 
Since $2\deg(X) >24$, we have $h^1(X, \Oo_X(t)) =0$ for all $t\ge 2$. By \cite[Prop. 3.2]{edinburgh}, $X$ is the normalization of an integral degree $9$ plane curve with $15$ nodes
as its only singularities together with the normalization map $X\to \PP^2$ induced by $|K_X(-1)|$. Let $\pi:  S:=\PP^2_{15}\to \PP^2$ be the blowing up of $\PP^2$ at $15$ general points. 
We have $\mathrm{Pic}(S) =\ZZ^{16}$ with a basis formed by $(1;0^{15}):= \pi^\ast (\Oo_{\PP^1}(1))$ and  $15$ exceptional divisors.
We have  
\vnii
\quad$\omega_S \cong \Oo_S((-3;-1^{15}))$, $\Oo_S((1;0^{15}))_{|X} \cong K_X(-1)$,  
$K_X\cong\Oo_S((6;1^{15}))_{|X}$ by adjunction and  
$|\Oo_X(1)|=|K_X-K_X(-1)|=|\Oo_S((5;1^{15}))_{|X}|$. 
The line bundle $L:=\Oo_S((5;1^{15}))$ on $\PP^2_{15}$ is very ample by \cite[Theorem 0]{Coppens}, which induces an embedding $S\stackrel{\cong}{\to} T\subset\PP^5$ with $\deg T=L^2=10$.
By Lemma \ref{1513}, a general $X\in\Hh_1$ is ACM. 
\vni
(vii) For $g\le 12$, we are in the Brill-Noether range, i.e. $\rho(d,g,r)=\rho(15,g,5)=60-5g\ge 0$. By \cite[Prop. 3.4, 3.5, 3.6]{edinburgh}, $\HO{15,g,5}$  is irreducible of the expected dimension with a unique component dominating $\Mm_g$. By \cite{l8} a general $X\in \Hh_{15,g,5}$ has maximal rank and 
the computation for $h^0(\PP^5, \Ii_X(t))$ easily follows. 
\end{proof}
\vspace{-18pt}
\section {Curves of degree $d=14$ }
 
 Starting from this section, we treat curves in $\PP^5$ of degree $d\le 14$. We 
prove the non-emptiness,  the irreducibility and compute the Hilbert function of a general element of $\Hh_{d,g,5}$.
 Several techniques the authors employed in \cite{bumi24}, \cite{edinburgh} for the  study of the case 
$d=15$  may well be adopted to lower degree cases $d\le 14$. 
Therefore, the proofs
will be brief and we skip some details for the sake of brevity. 
\vspace{-2pt}
\begin{remark}\label{trivia1}
\begin{itemize}
\item[(i)]
Given a smooth curve $X\subset\PP^r$ ($r\ge 2$) of genus $g$,  let $$\Aut(\PP^r)X:=\{Y\subset\PP^r | Y=\sigma (X) \mathrm{~ for ~some~ }\sigma\in \Aut (\PP^r)\}$$ be the family consisting of all curves $Y\subset \PP^r$ projectively equivalent to $X$. 
If $g\ge 2$,  $X$ has only finitely many automorphisms, thus $G:=\{h\in \Aut(\PP^r)\,|\, h(X)=X\}$ is a finite group.
$\Aut(\PP^r)X\cong\Aut(\PP^r)/G$ is irreducible  and quasi-projective such that $\Aut(\PP^r)X\subset\mu^{-1}\mu(X)$. We have $$\dim\Aut(\PP^r)X=\dim\Aut(\PP^r)=(r+1)^2-1. $$ 
\item[(ii)] Let $X\subset\PP^r,\,r\ge 2$ be a rational normal curve.  Consider the subgroup
$$H:=\{h\in\Aut(\PP^r)\,|\,h(X)=X\}\cong\Aut(\PP^1)\le \Aut(\PP^r).$$
We have
$\Aut(\PP^r)X\cong\Aut(\PP^r)/H\cong\Aut(\PP^r)/\Aut(\PP^1)$ and 
$$\dim\Aut(\PP^r)X=\dim\Aut(\PP^r)-\dim\Aut(\PP^1)=(r+1)^2-4.$$
\item[(iii)] Let $X\subset\PP^r$, $r\ge 2$ be a non-degenerate elliptic curve. Recall that any elliptic curve $X\subset\PP^r$ is sent into itself only by finitely many automorphism of $\PP^r$, thus $$\dim\Aut(\PP^r)X=\dim\Aut(\PP^r)=(r+1)^2-1.$$
\item[(iv)] 
Given a very ample and complete linear series $g^r_d,$ ($\,r\ge 4$) on a curve $X$, we denote  $\overset{o}{\mathbb{G}}(s,r)\subset {\mathbb{G}}(s,r)$ by the open and dense subset of the Grassmannian ${\mathbb{G}}(s,r)$ consisting of very ample subseries' $g^s_d\subset g^r_d$$\, \, ,3\le s<r$.
\end{itemize}
\end{remark}
\begin{remark}\label{trivial} 
When $d$ large enough compared with $g$, e.g. $d\ge 2g-7, g+r\le d, r\ge 3 $, $\HO{d,g,r}$ is irreducible and dominates $\h{M}_g$; \cite[Corollary 1.5]{PAMS} and \cite[Theorem 2.1]{PAMS}.
\end{remark}

\begin{proposition}\label{d14}
\begin{itemize}
\item[\rm{(i)}] $\HO{14,g,5}=\emptyset$ if $g\ge 16$.
\item[\rm{(ii)}] $\HO{14,15,5}=\HL{14,15,5}$ is reducible with two components of dimensions  more than $\Xx(d,g,r)$, 
$\mu^{-1}\mu(X)=\Aut(\PP^5)X$ for every smooth $X\in\HO{14,15,5}$. $X$ is ACM with the Hilbert function given
by \eqref{exthil}.
\item[\rm{(iii)}] $\HO{14,14,5}=\HL{14,14,5}$ is irreducible, 
$\mu^{-1}\mu(X)=\Aut(\PP^5)X$ for every smooth $X\in\HO{14,14,5}$.
\[
h^0(\PP^5, \Ii_X(t))=\begin{cases}
28: ~t=3\\
\binom{t+5}{5} -14t+13: t\neq 3
\end{cases}
h^1(\Ii_X(t))=\begin{cases}
1: ~t=3\\
0: ~t\neq 3\\
\end{cases}
\]
\item[\rm{(iv)}] $\HO{14,13,5}=\HL{14,13,5}$ is irreducible,
every smooth $X\in\HO{14,13,5}$ is ACM, 
$\mu^{-1}\mu(X)=\Aut(\PP^5)X$,
$h^0(\Ii_X(t))=\binom{t+5}{t}-(14t-12)$ for $t\ge 2$ and $\dim \mu(\HO{14,13,5})=26>\lambda(14,13,5).$
\item[\rm{(v)}] $\HO{14,12,5}=\HL{14,12,5}$ is reducible  with two components,
$\mu^{-1}\mu(X)=\Aut(\PP^5)X$ for every smooth $X\in\HO{14,12,5}$. A general $X\in\HO{14,12,5}$
is ACM with the Hilbert function $h^0(\Ii_X(t))=\binom{t+5}{5}-(14t-11), t\ge 2$.
\item[\rm{(vi)}] $\HO{14,11,5}$ is reducible with two components $\Hh_i, \,i=1,2$. $\h{H}_1=\HL{14,11,5}$ and the other one
$\h{H}_2\neq \HL{14,11,5}$. A general $X\in\HL{14,11,5}$ is ACM.
\[
\mu^{-1}\mu(X)=
\begin{cases}\Aut(\PP^5)X; \mathrm{for ~~ general~~} X\in\Hh_1\\
\overset{o}{\mathbb{G}}(5,6)\times\Aut(\PP^5)X;  \mathrm{for ~~ general~~} X\in\Hh_2.
\end{cases}
\]
\item[\rm{(vii)}] $\HO{14,g,5}$ is irreducible for $2\le g\le 10$. $\HO{14,g,5}=\HL{14,g,5}$ if $9\le g\le 10$  and  $\HL{14,g,5}=\emptyset$ if $g\le 8$. A general $X\in \HO{14,g,5}$ with $9\le g\le 10$ is ACM.
\[
\mu^{-1}\mu(X)=
\begin{cases}(X_4\setminus\Delta)\times
\Aut(\PP^5)X; g=10, 
\\
\Jac_{14}(X)\times\overset{o}{\mathbb{G}}(5,14-g)\times\Aut(\PP^5)X; 2\le g\le 9
\end{cases}
\] where $ \Delta=\{D\in X_4| \dim|\Ee-D|\ge 0, \,\Ee\in W^1_6(X)  \}.$
$h^0(\PP^5,\Ii_X(1))=0, h^0(\PP^5,\Ii_X(t)) = \max\{0,\tbinom{t+5}{5} -1-td+g\}$ for $t\ge 2$.

\end{itemize}
\end{proposition}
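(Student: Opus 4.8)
The plan is to prove the seven assertions one genus at a time, organized into three regimes dictated by the first Castelnuovo bound $\pi(14,5)=15$ (computed from the formula of \cite{he}). Assertion (i) is then immediate, since an integral nondegenerate curve of degree $14$ in $\PP^5$ has arithmetic genus at most $\pi(14,5)=15$, so $\HO{14,g,5}=\emptyset$ once $g\ge 16$. For the top two genera I would invoke Castelnuovo theory. When $g=15$ (part (ii)) every member is extremal, hence ACM with Hilbert function \eqref{exthil} by Remark \ref{basic1}, and is contained in a minimal degree surface; solving the degree--genus relations \eqref{F0}, \eqref{F2} and the Veronese relation of Remark \ref{cone1} isolates the surfaces carrying a $(14,15)$ curve and thereby the two irreducible components (a smooth-scroll family and the Veronese family). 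When $g=14$ (part (iii)) the general curve is nearly extremal and lies on a smooth quartic scroll (a solution $(a,b)=(3,8)$ of \eqref{F0}), giving a single component; the Hilbert function, in particular the value $h^1(\PP^5,\Ii_X(3))=1$ recording one unexpected cubic, is computed from the hyperplane-section sequence \eqref{eqppp2} together with Remarks \ref{pre2}, \ref{aa1} and Lemma \ref{o1oo1}, exactly as in the $d=15$ analysis of Theorem \ref{d=15}(iii). In both cases the uniqueness of the hyperplane series, hence $\mu^{-1}\mu(X)=\Aut(\PP^5)X$, follows from the gonality and scrollar-invariant control of Proposition \ref{z5}, Lemma \ref{maroni} and the Castelnuovo--Severi inequality (Remark \ref{CS}), with $\dim\Aut(\PP^5)X=35$ by Remark \ref{trivia1}(i).

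For the intermediate genera $g\in\{13,12,11\}$ the components are produced constructively on rational surfaces. For $g=13$ one checks that the general curve lies on a quintic del Pezzo surface ($X\in|(8;3^2,2^2)|$ on $\PP^2_4$ satisfies \eqref{delPC}); since the inequality \eqref{nofull} fails for $(d,g)=(14,13)$, Remark \ref{delpezzo1}(i) shows this del Pezzo locus has dimension $61$, exceeding $\Xx(14,13,5)=60$, so it fills the unique component, whence irreducibility, the excess $\dim\mu=26>\lambda$, and ACM-ness (by Kawamata--Viehweg vanishing on the del Pezzo as in Theorem \ref{d=15}(iv)). For $g=12$ a parallel surface analysis yields the two components of part (v), with fibre $\Aut(\PP^5)X$ by the same uniqueness argument. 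For $g=11$ (part (vi)) the linearly normal ACM component $\Hh_1$ is exactly the one produced by the nodal-smoothing construction of Lemma \ref{1411} (gluing a rational normal curve to an extremal genus-$7$ curve in a hyperplane and smoothing via $h^1(N_Y)=0$ and \cite[Theorem 4.1]{hh}), while the second component $\Hh_2$ consists of isomorphic projections from $\PP^6$ of extremal curves (note $\pi(14,6)=11$): here Lemma \ref{oo1} with $n=5$ gives $h^1(\Ii_X(t))=0$ for $t\ge2$ and $h^1(\Ii_X(1))=1$, and the projection center supplies the factor $\overset{o}{\mathbb{G}}(5,6)$ in $\mu^{-1}\mu(X)$.

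For the Brill--Noether range $2\le g\le 10$ one has $\rho(14,g,5)=54-5g\ge0$, so $\mu$ is dominant and the general curve has general moduli; irreducibility and the existence of a unique dominating component follow from Brill--Noether theory and \cite[Cor. 1.5, Thm 2.1]{PAMS} as in Remark \ref{trivial}, and maximal rank \cite{l8} yields the stated Hilbert function via \eqref{hil} exactly as in Remark \ref{lars}. The linear-normality dichotomy is numerical: the complete series of a general degree-$14$ bundle has dimension $14-g$, which equals $5$ (nonspecial) when $g=9$, is special of dimension $5$ when $g=10$, and exceeds $5$ when $g\le 8$; this gives $\HO{14,g,5}=\HL{14,g,5}$ for $9\le g\le10$ and $\HL{14,g,5}=\emptyset$ for $g\le8$, and ACM-ness for $g\in\{9,10\}$ follows as before. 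The fibre descriptions are Riemann--Roch bookkeeping over $\Pic^{14}(X)$: for $2\le g\le 9$ the bundle ranges over all of $\Jac_{14}(X)$ and the embedding series is a $5$-dimensional subseries of the complete $g^{14-g}_{14}$, giving $\Jac_{14}(X)\times\overset{o}{\mathbb{G}}(5,14-g)\times\Aut(\PP^5)X$ (the Grassmannian degenerating to a point when $g=9$); for $g=10$ the very ample $g^5_{14}$ is complete and special, namely $K_X(-D)$ for $D\in X_4$, so the fibre is $(X_4\setminus\Delta)\times\Aut(\PP^5)X$, where $\Delta$ is precisely the locus on which $K_X(-D)$ fails to be very ample. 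In every case one checks that the resulting fibre dimension is compatible with $\Xx(14,g,5)=86-2g$.

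The step I expect to be the main obstacle is not the cohomological Hilbert-function computations --- these closely follow the $d=15$ template of \cite{edinburgh,bumi24} --- but the global geometry of the Hilbert scheme in the intermediate range $g\in\{11,12,13\}$: proving that the enumerated constructions (del Pezzo and scroll families, nodal smoothings, and projections from $\PP^6$) exhaust $\HO{14,g,5}$ and produce exactly the claimed number of components, each of the predicted dimension and none contained in another. This forces one to rule out curves lying on the remaining quartic and quintic surfaces (cones, singular del Pezzos, projected scrolls) through Remark \ref{cone1} and Remark \ref{delpezzo1}(ii) together with the linear-normality constraints, and to establish the uniqueness of the hyperplane series on a general member so that the fibre is no larger than $\Aut(\PP^5)X$ (resp. $\overset{o}{\mathbb{G}}(5,6)\times\Aut(\PP^5)X$). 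A secondary but genuine difficulty is the exact determination of the excluded divisor $\Delta$ in the $g=10$ fibre, that is, the very-ampleness analysis of $K_X(-D)$ for $D\in X_4$.
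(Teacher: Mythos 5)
Your overall architecture --- genus by genus, the three Castelnuovo regimes, maximal rank via \cite{l8} when $\rho(14,g,5)\ge 0$ --- is the paper's, and your treatment of (i), (ii), (iii), (vi) and of the fibre descriptions in (vii) is essentially the paper's argument. But in the intermediate range there are genuine gaps, and the most serious is $g=12$: your claim that ``a parallel surface analysis yields the two components'' of part (v) would fail. The two components of $\HO{14,12,5}$ are not surface families: by \cite[Theorem 3.4(ii)]{lengthy} they are the locus $\Hh_4$ of general $4$-gonal curves embedded by $|K_X(-2g^1_4)|$ and the locus $\Hh_{\Sigma_{8,12}}$ of normalizations of plane octics with $9$ nodes embedded by $|K_X(-g^2_8)|$. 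Smooth $(14,12)$ curves on quartic scrolls do exist (type $(5,4)$ on $\FF_0$, type $(5,9)$ on $\FF_2$), but these loci have dimension strictly below $\lambda(14,12,5)=27$ in $\Gg^5_{14}$ (equivalently below $\Xx(14,12,5)=62$ in the Hilbert scheme), so no surface analysis can produce the components. Indeed, the paper's surface analysis at $g=12$ serves the \emph{opposite} purpose: it shows that a general member of either component lies on no surface of degree $4$ or $5$ --- using that such a member has no base-point-free $g^1_5$, by \cite[Theorem 2.6]{AC1} for $\Hh_4$ and by \cite{Coppens0} for $\Hh_{\Sigma_{8,12}}$ --- and that is precisely how $h^1(\Ii_X(2))=0$ and ACM-ness are established.

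Two further gaps. For $g=13$, your dimension count ($61>60$) shows the del Pezzo locus is large, but largeness of one locus does not prove irreducibility of $\HO{14,13,5}$; the paper gets irreducibility from the classification result \cite[Theorem 3.4]{speciality}. Moreover the fibre statement $\mu^{-1}\mu(X)=\Aut(\PP^5)X$ at $g=13$ requires showing $|K_X(-1)|$ is the \emph{unique} birationally very ample $g^3_{10}$, which the paper proves via the base-point-free pencil trick against the two $g^1_5$'s together with the Castelnuovo bound --- an argument absent from your sketch (you flag uniqueness as a difficulty but propose no mechanism). For $g=10$, you invoke Remark \ref{trivial}/\cite{PAMS} for irreducibility, but those results require $g+r\le d$, i.e. $g\le 9$ here; at $g=10$ the paper instead proves $\HL{14,10,5}$ irreducible by \cite[Theorem 2.2]{JPAA} and then excludes a hypothetical non--linearly-normal component by the dimension count $\dim\Mm^1_{10,4}+\dim\mathbb{G}(5,6)+\dim\Aut(\PP^5)=64<66=\Xx(14,10,5)$. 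In short, your proposed remedy for the exhaustion problem (ruling out curves on the remaining quartic and quintic surfaces) is not the missing ingredient at $g\in\{10,12,13\}$: what is needed are the prior irreducibility theorems for linearly normal curves (\cite{JPAA}, \cite{lengthy}, \cite{speciality}) and the gonality-theoretic uniqueness analysis of the embedding series.
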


\begin{proof}
A non-degenerate integral curve $X\subset\PP^5$ of degree $d=14$ has genus $$g\le\pi(14,5)=15; ~\pi_1(14,5)=13.$$
Since $\pi(14,6)=11$, $\HO{14,g,5}=\HL{14,g,5}$ for $12\le g\le 15$.
\begin{itemize}
\item[(i)] If $g\ge 16$, $\HO{14,g,5}=\emptyset$.
\item[(ii)] If $g=15$, $X\in\Hh_{14,15,5}=\HL{14,15,5}$ is an extremal curve, lying on a surface of minimal degree.
Every extremal curve is ACM,  $h^1(\Ii_X(t))=0$ for all $t\ge 1$. 
\vnii
\quad(ii-a)
If  $X$ lies on a Veronese surface, $X$ is the image of a smooth plane septic under the $2$-tuple embedding. Let $\Hh_0\subset \HO{14,15,5}$ be the family consisting of such curves on Veronese surfaces. Set $$\Ff_0:=\{ ~|\Oo_C(2)|~ |~ [C]\in\PP(H^0(\PP^2,\Oo(7))/\Aut(\PP^2)\}\subset \Gg^5_{14}. $$ 
\vni
Note $\dim\Ff_0=\dim\PP(H^0(\PP^2,\Oo(7))-\dim\Aut(\PP^2)=27$, thus
\begin{equation*}
\dim\Hh_0=\dim\Ff_0+\dim\Aut(\PP^5)=62>\Xx(14,15,5)=56.
\end{equation*}
For a smooth curve $X\subset S\subset\PP^5$  of even degree $d=2a$
lying on a Veronese surface $S$,  we recall \cite[Prposition 2.4]{cil}
\begin{equation}\label{plane}h^1(N_{X,\PP^5})=3(g-d+5)-3a+9.
\end{equation}
Thus,
\hskip 20pt $h^0(N_{X,\PP^5})=\Xx(14,15,5)+h^1(N_{X,\PP^5})=\dim\Hh_0$
\vnii
and $\Hh_0$ is a generically reduced {\it component}.
We recall that a smooth plane curve of degree $a\ge 4$ has a {\it unique} $g^2_a$ and a {\it unique} very ample \& complete $g^5_{2a}$. Thus $\mu^{-1}\mu(X)=\Aut(\PP^5)X$. 

\vnii
\quad(ii-b)
Suppose $X$ lies on a smooth rational normal surface scroll $S\cong\FF_0$ or $S\cong\FF_2$. Solving \eqref{sd} and \eqref{sg}, we have $X\in|4H-2L|$ and $X$ is $4$-gonal. 

If $X\subset\FF_0\cong Q\subset\PP^5$,  $X\in |\Oo_Q(4,6)|$, $m(X)=4$ by  \eqref{F0}. Set $$\Ff_{4,0}:=\{g^5_{14}~ |~g^5_{14}=|\Oo_X(1)|,  [X]\in |\Oo_Q(4,6)|/\Aut(Q)\}\subset\Gg^5_{14}.$$ By \eqref{sdn},  we have $\dim|4H-2L|=34$. Note that  

$\dim\Ff_{4,0}= \dim|4H-2L|-\dim\Aut(\FF_0)=28>\dim\Ff_0.$ 
\vnii
By lower semi-continuity of gonality, $\Ff_0$ -- the one we described in part (ii-a) -- is not in the boundary of the irreducible locus  $\Ff_{4,0}$,

If $X\subset\FF_2$, $X\in |\Oo_{\FF_2}(4h+10f)|$, $m(X)=2$ by  \eqref{F2}. 
Set 
$$\Ff_{4,2}:=\{g^5_{14}~ |~g^5_{14}=|\Oo_X(1)|,  [X]\in |\Oo_{\FF_2}(4h+10f)|/\Aut(\FF_2)\}\subset\Gg^5_{14}.$$ Since
$\dim\Ff_{4,2}=|\Oo_{\FF_2}(4h+10f)|-\dim\Aut(\FF_2)=27=\dim\Ff_0,$  $\Ff_{4,2}$ is not in the boundary of the component $\Ff_0$ and vice versa.

Let $\Hh_{4,0}$ ($\Hh_{4,2}$ resp.) be the $\Aut(\PP^5)$-bundle over $\Ff_{4,0}$ ($\Ff_{4,2}$ resp.). By Proposition \ref{z5}, 
$g^1_4$ and $g^1_6$ on $X\in\Hh_{4,0}$ are unique and hence a very ample \& complete  $g^5_{14}=|2g^1_4+g^1_6|$ is  {\it unique}. For $X\in\Hh_{4,2}$, we may also conclude that there exists a {\it unique} complete and very ample $g^4_{15}$ which is of the form $|\Oo_{\FF_2}(4h+10f)\tensor\Oo_X|$.


We now claim that curves in $\Hh_{4,2}$ are specialization of curves in $\Hh_{4,0}$. This is known 
to be true in a more general context. In our specific situation, one compares curves $\hat{X}\subset\PP^3$ for each $X\in\Hh_{4,i}$, $i=0,2$, induced by the residual series of the {\it unique} $g^1_4$ with respect to $|\Oo_X(1)|$, i.e. $|\Oo_X(1)-g^1_4|$. For $X\in \Hh_{4,0}$, $|\Oo_X(1)-g^1_4|=|g^1_4+g^1_6|$ and hence $\hat{X}\subset\PP^3$ lies on a smooth 
quadric.  For $X\in \Hh_{4,2}$, $|\Oo_X(1)-g^1_4|=|h+2f|_{|X}$, we get $\hat{X}\subset\PP^3$ lies on a quadric cone.
The claim follows by \cite[Propositions 1.6 \& 2.1]{Zeuthen}.

\vnii
\quad (ii-c) By Remark \ref{cone1}(1b), 
$X$ does not lie on a cone over a rational quartic.

Since $X$ is an extremal curve,  the Hilbert function is given by \eqref{exthil}. By the uniqueness of $|\Oo_X(1)|$,
$\mu^{-1}\mu(X)=\Aut(\PP^5)X$ for every smooth $X\in\HO{14,15,5}$. From our dimension count, there is a component $\Hh_4$ with $\dim\Hh_4=\dim\Ff_{4,0}+\dim\Aut(\PP^5)\neq\dim\Hh_0$. 
\vnii
\item[(iii)]
$g=14$: Since $\pi_1(14,5)=13<g=14<\pi(14,5)$, $X$ lies on a quartic surface  $S\subset\PP^5$. $S$ cannot be a Veronese since $p_a(X)\neq \binom{7-1}{2}$ or a cone by Remark \ref{cone1}(b).
Thus, $S$ is smooth quartic surface scroll. By solving \eqref{sd}, \eqref{sg}, we get
$X\in|3H+2L|, |K_S+X-H|=|4L|$ and
$X$ is trigonal,
Thus a very ample and complete $\mathcal{D}=g^5_{14}$ on $X\in\HO{14,14,5}$ is {\it unique} which of the form  $\Dd=|K_X-4g^1_3|$.

The irreducibility of $\HO{14,14,5}$ follows from the irreducibility of $\Mm^1_{g,3}$. 
For $X\subset\FF_0$, the Maroni invariant $m(X)=6$ is the maximal one,  while if $X\subset\FF_2$, $m(X)=5$ by Lemma \ref{maroni}.
$\HO{14,14,5}$ dominates $\Mm^1_{g,3}\setminus\Mm^1_{g,2}$ and
$$\dim\HO{14,14,5}=\dim\Mm^1_{g,3}+\dim\Aut(\PP^5)=64>\Xx(14,14,5).$$
$X\in \HO{14,14,5}$ is not ACM; cf. \cite[Corollary 3.17]{he}.  
To compute the Hilbert function, we
set $T\cong\FF_0\cong\PP^1\times\PP^1\subset\PP^5$ and let
\vnii
$H^0(\PP^5, \Oo(t))\stackrel{\rho_{T,t}}{\to}H^0(T,\Oo_{T}(t))
\stackrel{\rho_{T,X,t}}{\to}H^0(X,\Oo_X(t))$ 
be  restriction maps; $\rho_{X,t}={\rho_{T,X,t}}\circ{\rho_{T,t}}$ \& $\rho_{T,t}$ is surjective for $t\ge 1$ by \cite{laz}. 
Since $X\in|\Oo_{\PP^1\times\PP^1}(3,8)|$, 
\vnii
$h^1(T,\Ii_{X,T}(t))=
h^1(T,\Oo(tH-X))=
h^1(\PP^1\times\PP^1,\Oo(t-3,2t-8))>0$ only if 
$t=3$. Thus $h^1(\Ii_X(3))=h^1(\PP^1\times\PP^1,\Oo(0,-2))=1$, $h^0(\Ii_X(3))=28$.
For $t\ne 3$, $\rho_{X,t}$ is surjective, $h^1(\Ii_X(t))=0$ and $h^0(\Ii_X(t))=\binom{t+5}{5}-(14t-13)$. 
\vni
\item[(iv)] $g=13$: The irreducibility is a special case of \cite[Theorem 3.4]{speciality}. A general $X\in\Hh_{14,13,5}$ lies on a smooth del Pezzo surface 
$T\subset\PP^5$,  $X\in |(8;3^2,2^2)|$, $|K_X(-1)|=g^3_{10}$ is birationally very ample, 
$X^\vee\in |\Oo_{\PP^1\times\PP^1}(5,5)|$ singular with $3$-nodes; cf. \cite[proof of Theorem 3.4]{speciality}. $X$ has at least two $g^1_5$'s cut out by rulings of 
$\PP^1\times\PP^1$, say $M, N$ with $M\otimes N=|K_X(-1)|$. 

\vnii Claim: $|K_X(-1)|$ is the only {\it birationally very ample} $g^3_{10}$ on $X$:  Let $R$ be a $g^3_{10}$, which is necessarily birationally very ample by Remark \ref{bih},  base-point-free and complete by Castelnuovo genus bound. Choose $s_1, s_2\in H^0(X,M)$ independent sections and consider the cup-product map 

\hskip 40pt $H^0(X,M)\otimes H^0(X,R)\stackrel{\eta}{\to} H^0(X,R\otimes M).$

\vnii
Since $R\otimes M$ is birationally very ample, $h^0(X,R\otimes M)\le 7$ by Castelnuovo genus bound.  By the base-point-free pencil trick \cite[page 126]{ACGH}, $\dim\ker\eta=h^0(X,R\otimes M^{-1})\ge 1$. Thus
$|R\otimes M^{-1}|\neq\emptyset.$ Likewise, $|R\otimes N^{-1}|\neq\emptyset$, thus $R=M\otimes N=|K_X(-1)|$ and the Claim follows.  
Passing to residual series, $|\Oo_X(1)|$ is the {\it unique} $g^5_{14}$ on $X$ thus
$\mu^{-1}\mu (X)=\Aut(\PP^5)X$.

Since $X$ is linearly normal, $h^1(\Ii_X(1))=0$ . Recall $X\in |(8;3^2,2^2)|$ and let $T\cong\PP^2_4\subset\PP^5$. 
\vnii
\quad(I) First we show $h^1(\Ii_X(2))=0$. By \cite{laz}, $h^1(\Ii_T(t))=0$, $t\ge 2$ and $H^0(\PP^5, \Oo(t))\stackrel{\rho_{T,t}}{\to} H^0(T,\Oo_T(t))$ is surjective. Set $L:=(3;1^4)=-\omega_T$. 
We have 
\begin{align*}h^1(T,\Ii_{X,T}(t))&=h^1(T,\Oo_T(tL-X)) \\&=h^1(T,\Oo_T(((3t-8);(t-3)^2,(t-2)^2)).
\end{align*} 
For $t=2$,  $h^1(T,\Ii_{X,T}(2))=h^1(T,\Oo_T(-2;-1^2,0^2))=0$; note that  $\Oo_T(2;1^2,0^2)$ is nef and big \cite[Prop. 3.2, Th. 3.4]{sandra}, thus Kawamata-Vieweg vanishing can be applied. Thus, $\rho_{{X,T},2}$ is surjective and hence $\rho_{X,2}=\rho_{{T,X},2}\circ\rho_{T,2}$ is surjective.

\vnii
\quad(II)
For $t=3$,  $h^1(T,\Ii_{X,T}(3))=h^1(T,\Oo_T(1;0^2,1^2))=h^1(T,\omega_T\otimes(4;1^2,2^2))$.
Since $\Oo_T(4;1^2,2^2)$ is nef and big, $h^1(T,\Ii_{X,T}(3))=0$. Thus, $X$ is ACM by Remark \ref{pre2} (iii)
\&
$h^0(\Ii_X(t))=\binom{t+5}{t}-(14t-12)$ for $t\ge 2$.

We have $\mu(\HO{14,13,5})\subset\Mm^1_{g,5}$. However $\mu$ does not dominate
$\Mm^1_{g,5}$. $\mu(\HO{14,13,5})$ is the locus of $5$-gonal curves 
with another $g^1_5$. To estimate the dimension of $\mu(\HO{14,13,5})$, we use the description of the dual curve $X^\vee\in|\Oo_{\PP^1\times\PP^1}(5,5)|=:\Ll$ with $\delta=3$ nodes, 
$$ \mu(\HO{14,13,5})\stackrel{bir}{\cong}\Sigma_{\Ll,\delta}/\Aut{(\PP^1\times\PP^1)},$$
where $\Sigma_{\Ll,\delta}$ is the Severi variety of integral curves in $\Ll$ on $\FF_0$ with $\delta$-nodes. Thus we have

$\dim \mu(\HO{14,13,5})=\dim\Ll-\delta-\Aut(\FF_0)=26>\lambda(14,13,5).$
\vnii
\item[(v)] $g=12:$
The reducibility of $\HO{14,12,5}$ is known by \cite[Theorem 3.4(ii)]{lengthy}, which deserves some explanation in relation to the moduli map. It was shown that $\HL{14,12,5}$ is reducible with
two components of the minimal possible dimension, say $\Hh_4$ formed by smooth 
curves $X\subset\PP^5$ induced by the very ample $|K_X(-2g^1_4)|$ on a general $4$-gonal curve. Since there is only one $g^1_4$ on a general member in $\Mm^1_{g,4}$ and $|\Oo_X(1)|=|K_X(-2g^1_4)|$ we have  $\mu^{-1}\mu(X)=\Aut(\PP^5)X$ for  a general $X\in\Hh_4$ and $\mu$ dominates $\Mm^1_{g,4}$. We have $\dim\Hh_4=\Xx(14,g,5)=\dim\Mm^1_{g,4}+\dim\Aut(\PP^5)=62$. The other component $\Hh_{\Sigma_{8,12}}$ is the one induced by the very ample linear series residual to the hyperplane series of a singular plane curve of degree $8$ with $\delta=9$ nodes. By  \cite[Theorem 3.9]{treger} the normalization $X$ of a general element in the Severi variety ${\Sigma_{8,12}}$ has a unique \bpf $g^2_8$, hence the very ample $g^5_{14}=|K_X(-g^2_8)|$ is unique and we have  $\mu^{-1}\mu(X)=\Aut(\PP^5)X$ for  a general $X\in\Hh_{\Sigma_{8,12}}$.
We now take a general $X\in\Hh_4$ or $X\in\Hh_{\Sigma_{8,12}}$.

\vnii
\quad Step (I): In this step we show $h^1(\Ii_X(2)) =0$. Assume $h^1(\Ii_X(2)) >0$, i.e. assume $h^0(\Ii_X(2))=h^0(\Oo_{\PP^5}(2))-h^0(X,\Oo_X(2))+h^1(\Ii_X(2))\ge 5$. By Remark \ref{pre2}\,(iv), $h^0(H,\Ii_{X\cap H,H}(2))\ge 5$.
By Remark \ref{aa1}\,(ii), either (i) $X\cap H$ is contained in a rational normal curve $C\subset H$ or (ii) an elliptic normal curve $D\subset H$. 

\vnii
\quad(i) Assume $X\cap H\subset C$, a rational normal curve. Using exactly the same argument employed in the proof (iv-I) in Theorem \ref{d=15},
we deduce that $X$ lies on a quartic surface $T\subset \PP^5$.  However, we argue that $\deg T=4$ is not
possible as follows.
\begin{itemize}
\item[(a)]
Since $\deg(X)=14$ and $g\neq\binom{6}{2}$, $T$ is not a Veronese surface. 
\item[(b)] $T$ is not a cone over a rational normal curve by Remark \ref{cone1} (1b). 
\item[(c)]
Assume that $T\cong Q\cong\FF_0$ embedded by the complete linear system $|\Oo_Q(1,2)|$. We take  $(a,b)\in \NN\times\NN$ such that $X\in |\Oo_Q(a,b)|$. We have $\deg(X)=14 =2a+b$,  
$g=12=(a-1)(b-1)$ and hence $(a,b)=(5,4)$. 
Recall that  $\h{H}_4$ dominates $\Mm^1_{g.4}$. Hence a general $X\in\Hh_4$ has no base-point-free $g^1_5$ by general theory; a general $k$-gonal curve has no base-point-free and complete $g^1_{k+1}$ as long as $\rho({k+1,g,1})<0$; cf. \cite[Theorem 2.6]{AC1}.
If  $X\in\Hh_{\Sigma_{8,12}}$, 
a general 
$X\in\Hh_{\Sigma_{8,12}}$ has no $g^1_4$ by \cite{Coppens0}.
\item[(d)]
Assume $T\cong \FF_2$,
 and set $X\in |\Oo_{\FF_2}(ah+bf)|$ with $h^2=-2$. 
Solving \eqref{F2}, we get $(a,b)=(5,9)$.
Since $X\cdot f=5$, $X$ has a base-point-free $g^1_5$, contrary to the fact that a general $X\in \h{H}_4$  or $X\in\Hh_{\Sigma_{8,12}}$ does not have a base-point-free $g^1_5$ as we noted in  (c) above.
\end{itemize}

\vnii
\quad (ii) Assume $X\cap H\subset D$, a elliptic normal curve. In this case we may deduce $X\subset T$, $\deg T=5.$ We assume $T\cong\PP^2_4$ is smooth. For $(d,g)=(14,12)$, $L=(10;4^4)$ satisfies \eqref{delPC}. Thus there is a smooth curve with $(d,g)=(14,12)$ lying on a smooth del Pezzo $T\subset\PP^5$. However, by \eqref{nofull}, the family of smooth
curves $X$ lying on a smooth del Pezzo does not constitute a full component.
Thus Step (I) is done.
\noindent

\vnii
\quad Step (II): We show $h^1(H,\Ii_{X\cap H,H}(3))=0$:

\noindent
Assume $h^1(H,\Ii_{X\cap H,H}(3)) >0$. Then by Lemma \ref{o1oo1}, $X\cap H$ lies on a rational normal curve $C\subset H$. By repeating the same routine as in Step (I)\,(i) above,  $X$ lies on a surface of minimal degree and deduce $h^1(H,\Ii_{X\cap H, H}(3))=0$. 

\vnii
\quad Step (III): By Step (II) \& Remark \ref{pre2}\,(ii), we have $h^1(H,\Ii_{X\cap H,H}(t)) =0$ for all $t\ge 3$. Thus Step (I), induction on $t$ and the long cohomology exact sequence of the short exact sequence \eqref{eqppp2} give
$h^1(\Ii_X(t)) =0\, \,\forall t\in\NN$.

 \vni

\item [(vi)] If $g=11$, $\HL{14,11,5}$ is irreducible of the expected dimension dominating the irreducible locus $\Mm^1_{g,6}$ and $|K_X(-1)|=g^1_6$ for general $X\in\HL{14,11,5}$; \cite[Theorem 2.3]{JPAA} and \cite[Theorem 2.1]{lengthy}. Since $g^1_6$ is unique on a general member in $\Mm^1_{g,6}$, $\mu^{-1}\mu (X)=\Aut(\PP^5)X$ for a general  $X\in\HL{14,11,5}$.
Assume that there is a smooth, non-linearly normal $X\in \HO{14,11,5}$, i.e.  $\dim|\Oo_X(1)|\ge 6$. By Clifford's theorem, $|K_X(-1)|=g^2_6$. Since $g=11>\binom{6-1}{2}$,
$|K_X(-1)|=g^2_6$ is compounded. $X$ cannot be bielliptic by Remark \ref{bih}.
Thus $|K_X(-1)|=2g^1_3$. Since $|K_X(-2g^1_3)|=g^6_{14}$ is very ample for every trigonal curve of genus $g\ge 10$, we have an irreducible 
family $\Gg_3\subset
\Gg^5_{14}$ consisting of incomplete very ample $g^5_{14}$'s which surjects onto $\Mm^1_{g,3}\setminus\Mm^1_{g,2}$ with 

$\dim\Gg_3=\dim\Mm^1_{g,3}+\dim\mathbb{G}(5,6)=\lambda(d,g,5)=29.$ 

\vnii
Thus we have the extra component $\Hh_3\neq\HL{14,11,5}$ of the expected dimension. For a general $X\in\Hh_3$, 
$\mu^{-1}\mu (X)=\overset{o}{\mathbb{G}}(5,6)\times\Aut(\PP^5)X.$

\vnii

A general $X\in\HL{14,11,5}$ is ACM by Lemma \ref{1411}.

A general $X\in \Hh_3$ is trigonal, $\dim|\Oo_X(1)|=6$. $X$ is the image of an external projection 
of curve $Y\subset\PP^6$ which is extremal since $\pi(14,6)=11=g$. By Lemma \ref{oo1}, we have $h^1(\Ii_X(t))=0$ for all $t\ge2$ and $h^1(\Ii_X(1))=1$.
\item[(vii)]
If $g\le 10$, $\rho (14,g,5)\ge0$ and $\Hh_{14,g,5}$ has a unique component of the expected dimension dominating $\Mm_g$. 
\vnii
For the irreducibility and others, we first consider the case $g=10$.

\vnii
\quad (a) $g=10$: By \cite[Theorem 2.2]{JPAA},  $\HL{d,g,r}$ is irreducible of the expected dimension. Suppose that there is a component $\Hh\subset\HO{d,g,r}$ consisting of non-linearly normal curves. For a general $X\in \Hh$,
$|K_X(-1)|=g^1_4$
by Clifford's theorem. However, the family arising this way has dimension $\dim\Mm^1_{g,4}+\dim\mathbb{G}(5,6)+\Aut(\PP^5)=64<\Xx(d,g,r)=66$, which does not constitute a full component. Thus $\HO{d,g,r}=\HL{d,g,r}$ is irreducible.

We take a general $X\in\HO{14,10,5}$. Since $\rho(14,10,5)>0$, $\HO{14,10,5}$ dominates $\Mm_g=\Mm^1_{g,6}$. 
Let $X_K\subset\PP^9$ be the canonical curve of a non-hyperelliptic curve $X$ of genus $g=10$. For any choice $p+q+r+s\in X_4\setminus\Delta$, where 
$\Delta=\{D\in X_4 \,| \,\dim|\Ee-D|\ge 0, \,\Ee\in W^1_6(X)  \}$, 
the projection $\pi_L: X_K\to \pi_L(X_K)\subset\PP^5$ with center at $L=\bar{pqrs}$ is an embedding onto a linearly normal curve with $\deg\,\pi_L(X_K)=\deg\, K_X-4=14$. Conversely, any linearly normal, non-degenerate smooth curve 
with $(d,g)=(14,10)$ in $\PP^5$ is obtained this way.
Thus for a general $X\in \HL{14,10,5}$,
$\mu^{-1}\mu(X)=(X_4\setminus\Delta)\times
\Aut(\PP^5)X$.

By a theorem of Max Noether \cite[p. 117]{ACGH}, the canonical curve of a non-hyperelliptic curve is ACM.  Therefore a successive general projection of a canonical curve onto $X\subset \PP^5$ is ACM by an easy modification of Lemma \ref{oo1} using \cite{l8}.

\vnii
\quad
(b) $g\le 9$: $\HO{d,g,r}$ is irreducible by Remark \ref{trivial}. 
Recall that if  $d\ge g+5$, a curve of genus $g\ge 2$ has a very ample and non-special $|D|=g^5_d$ by a theorem of Halphen. The uniqueness of a dominating component $\Hh_0$ of $\HO{d,g,r}$ 
implies that for a general $X\in\Hh_0$, $|\Oo_X(1)|$ is non-special.  Thus in the range $2\le g\le 9$ and $d=14$, the irreducibility of $\HO{d,g,5}$ gives the following description of the fibre of the moduli map for general $X\in\HO{d,g,5}$;
$
\mu^{-1}\mu(X)\cong
\Jac_{14}(X)\times\overset{o}{\mathbb{G}}(5,14-g)\times\Aut(\PP^5)X; \, 2\le g\le 9.
$
\vnii
Since $\rho(14,g,5)>0$, 
the restriction map $H^0(\PP^5, \Oo(t)\to H^0(X, \Oo_X(t))$ is of maximal rank by \cite{l8}
and the Hilbert function is given by \eqref{hil} for all $t\ge 2$; cf. Remark \ref{lars}. A general $X\in\HL{14,g,5}$ 
is ACM if $g=9$.
\end{itemize}\vspace*{-\baselineskip}
\end{proof}
\begin{remark} As we have seen in the proof of Proposition \ref{d14}(iii), every  smooth curve $X\subset\PP^5$ with $(d,g)=(14,14)$ is trigonal and the embedding into $\PP^5$ is given by $|K_X-4g^1_3|$. However  $|K_X-4g^1_3|$ may not be very ample for every trigonal curve of genus $g=14$. 
The only  trigonal curve  such that  $|K_X-4g^1_3|$ is not very ample is the curve lying on a rational normal cone with a double point at the vertex of the cone: Choose $\w{X}\in|\Oo_{\FF_4}(3h+14f)|$. $|K_{\FF_4}+\w{X}-4f|=|h+4f|$ is not very ample and its restriction to $\w{X}$ is 
not very ample inducing a birational morphism $\w{X}\to X\subset S_0\subset\PP^5$ where 
$X$ has a double point at the vertex of the cone $S_0$; $\w{X}\cdot (h+4f)=2$.
One may argue that  the only trigonal curve $\w{X}$ such that $|K_{\w{X}}-4g^1_3|$ is not very ample is of this type, i.e. $\w{X}\in|\Oo_{\FF_4}(3h+14f)|$, which is left to readers. 
\end{remark}
\vspace{-12pt}
\section {Curves of degree $d=13$ in $\PP^5$ }
\begin{proposition}\label{d13}
\begin{itemize}
\item[\rm{(i)}] $\HO{13,g,5}=\emptyset$ if $g\ge 13$.
\item[\rm{(ii)}] $\HO{13,12,5}=\HL{13,12,5}$ is reducible with two components. One component consists of trigonal curves with every possible Maroni invariants. The other one consists of $4$-gonal curves. $\mu^{-1}\mu(X)=\Aut(\PP^5)X$ for every smooth  $X\in\HO{13,g,5}$ and $X$ is ACM. 
\item[\rm{(iii)}] $\HO{13,11,5}=\HL{13,11,5}$ is irreducible.
$\mu^{-1}\mu(X)=\Aut(\PP^5)X$ for  a general $X\in\HO{13,11,5}$  and $X$ is ACM. 
\item[\rm{(iv)}] $\HO{13,10,5}=\HL{13,10,5}$ is irreducible. 
$\mu^{-1}\mu(X)=\Aut(\PP^5)X$ for  a general $X\in\HO{13,10,5}$ and $X$ is ACM.

\item[\rm{(v)}] $\HO{13,9,5}=\HL{13,9,5}$ is irreducible. For general $X\in\HO{13,9,5}$, $X$ is ACM, 
$\mu^{-1}\mu(X)=X_3\times
\Aut(\PP^5)X$. 

\item[\rm{(vi)}] $\HO{13,g,5}$ is irreducible and $\HL{13,g,5}=\emptyset$ if $g\le 8$. For a general $X\in\HO{13,g,5}$, 

$\mu^{-1}\mu(X)=\Jac_{13}(X)\times\overset{o}{\mathbb{G}}(5,13-g)\times\Aut(\PP^5)X; 2\le g\le 8.$

\vnii
A general $X\in\HO{13,g,5}$ is ACM for $g=8$.  For a general $X\in\HO{13,g,5}$ with $g\le 7$, we have $h^1(\Ii_X(t))=0$ $\forall t\ge2$  and $h^1(\Ii_X(1))=8-g$.
\end{itemize}
\end{proposition}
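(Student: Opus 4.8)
The plan is to descend from the Castelnuovo bound $\pi(13,5)=12$ genus by genus, fixing in each case the hyperplane series by residuation: writing $|\Oo_X(1)|=|K_X-B|$ with $\deg B=2g-15$ and $h^0(B)=h^1(\Oo_X(1))$, a general $X$ has $B$ a $g^3_9$ when $g=12$, a $g^2_7$ when $g=11$, a $g^1_5$ when $g=10$, a general $D\in X_3$ when $g=9$, and $\Oo_X(1)$ non-special once $g\le 8$. Part (i) is immediate, since no non-degenerate integral curve of degree $13$ in $\PP^5$ has genus exceeding $\pi(13,5)=12$. For the extremal case (ii) every $X$ is ACM and lies on a minimal-degree surface by Remark \ref{basic1}; the odd degree rules out the Veronese, and running through Remark \ref{cone1} (solving \eqref{cone}, \eqref{F0} and \eqref{F2}) leaves exactly the trigonal carriers --- the rational cone, the class $\Oo_Q(3,7)$ on $\FF_0$, and $3h+10f$ on $\FF_2$, realizing all admissible Maroni invariants by Lemma \ref{maroni} --- together with the $4$-gonal carriers $\Oo_Q(4,5)$ on $\FF_0$ and $4h+9f$ on $\FF_2$. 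These assemble into precisely the images of $\Mm^1_{12,3}$ and of $\Mm^1_{12,4}$, two irreducible components; the Hilbert function is \eqref{exthil}, and $\mu^{-1}\mu(X)=\Aut(\PP^5)X$ follows from the uniqueness of the residual $g^3_9$ (equal to $3g^1_3$, resp.\ to $g^1_4+g^1_5$) on every smooth member.

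The cases $g=11,10$ have $\rho<0$. For both, irreducibility comes from the irreducibility of the governing locus (nodal plane septics with four nodes for $g=11$; the $5$-gonal locus $\Mm^1_{10,5}$ for $g=10$), and $\mu^{-1}\mu(X)=\Aut(\PP^5)X$ from uniqueness of the plane model, resp.\ of the $g^1_5$ since $\rho(5,10,1)<0$. The ACM arguments diverge. For $g=11$ the del Pezzo count \eqref{nofull} is an equality at $d=13$, so a general $X$ lies on a quintic del Pezzo $T$; as $\deg X=13>2\deg T$, every quadric through $X$ contains $T$, whence $h^0(\Ii_X(2))=h^0(\Ii_T(2))=5$, $h^1(\Ii_X(2))=0$, and $X$ is ACM by Remark \ref{pre2}(iii) together with the $t\ge 3$ vanishing of Remark \ref{pre2}(ii). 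For $g=10$, where \eqref{nofull} is strict and the del Pezzo locus is not a full component, I would instead suppose $h^1(\Ii_X(2))>0$, deduce $h^0(H,\Ii_{X\cap H,H}(2))\ge 5$, invoke Remark \ref{aa1}(ii) to place $X\cap H$ on a rational or elliptic normal curve, hence $X$ on a surface of degree $\le 5$, and contradict this using Remark \ref{cone1}(1b),(2c) in degree $4$ and the generality of $X$ against the sub-full del Pezzo locus in degree $5$, exactly as in step (v-I-ii) of Theorem \ref{d=15}.

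In the Brill--Noether range (v),(vi) I would use irreducibility of the component of $\HO{13,g,5}$ dominating $\Mm_g$: Remark \ref{trivial} applies verbatim for $g\le 8$ (both $d\ge 2g-7$ and $g+5\le d$ hold), while for $g=9$ ($\rho=3$) it follows from irreducibility of $\mathcal{G}^5_{13}$ over a general curve. For $g=9$ the residual $D\in X_3$ produces the factor $X_3$ in $\mu^{-1}\mu(X)=X_3\times\Aut(\PP^5)X$, the series $\Oo_X(1)$ is complete of dimension $5$ (so linearly normal), and ACM follows from maximal rank \cite{l8}. The boundary $g=8$ is likewise the linearly normal case, $h^0(\Oo_X(1))=6$ and $\overset{o}{\mathbb{G}}(5,5)$ a single point, with $X$ ACM again by \cite{l8}; for $g\le 7$ one has $h^0(\Oo_X(1))=14-g>6$, so $X$ is a generic projection, $\HL{13,g,5}=\emptyset$, $h^1(\Ii_X(1))=8-g$, and \cite{l8} gives $h^1(\Ii_X(t))=0$ for all $t\ge 2$, with $\mu^{-1}\mu(X)=\Jac_{13}(X)\times\overset{o}{\mathbb{G}}(5,13-g)\times\Aut(\PP^5)X$ recording the line bundle, the projection centre and projective equivalence.

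The step I expect to be the main obstacle is the ACM verification across the special genera $g=9,10,11$, where $X$ is far from Brill--Noether general and maximal rank cannot be quoted uniformly. The hyperplane-section method of Remark \ref{aa1} and Lemma \ref{o1oo1} only engages once $h^0(H,\Ii_{X\cap H,H}(2))\ge 5$; a quick count shows this threshold is reached for $g=10,11$ but \emph{not} for $g=9$, where $h^1(\Ii_X(2))>0$ would force only $h^0\ge 4$. Thus the three genera demand three different mechanisms --- direct containment in a del Pezzo, a generality contradiction, and the maximal rank theorem --- and the most delicate point is to make the degree-$5$ exclusion for $g=10$ fully rigorous, i.e.\ to show that the general member of the irreducible component lies on no quintic surface. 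This rests on the strict inequality in \eqref{nofull} together with the classification of quintic surfaces (cone over an elliptic or rational quartic, singular del Pezzo, projected scroll) already used in Theorem \ref{d=15}, each of which must be ruled out against the linear normality and generality of $X$.
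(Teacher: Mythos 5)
Your skeleton (residuation of the hyperplane series, genus-by-genus descent from $\pi(13,5)=12$, surface classification in the extremal case, maximal rank in the Brill--Noether range) is the paper's, and your one real deviation is both genuinely different and sound: for $g=11$ the paper assumes $h^1(\Ii_X(2))>0$, forces $X$ onto a quartic surface via Remarks \ref{pre2}\,(iv) and \ref{aa1}\,(i), and contradicts Remark \ref{cone1}, whereas you compute $h^0(\Ii_X(2))=h^0(\Ii_T(2))=5$ directly on a quintic del Pezzo $T$. Your route works because \eqref{nofull} is an equality at $(13,11)$, a class such as $(7;2^4)$ realizes $(d,g)=(13,11)$, and hence the $58$-dimensional del Pezzo family is dense in the irreducible, expected-dimension $\HO{13,11,5}$ quoted from \cite{lengthy}; it buys a cleaner vanishing at the cost of having to exhibit the class and use projective normality of $T$. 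Your observation that the hyperplane-section method cannot engage at $g=9$, so ACM there must come from \cite{l8}, is correct and is exactly what the paper does.

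Part (ii), however, has genuine gaps: the assembly of the five carriers into exactly two irreducible components is asserted rather than proved, and one assertion is false. The $4$-gonal family is \emph{not} ``the image of $\Mm^1_{12,4}$'': it has dimension $\dim|\Oo_Q(4,5)|-\dim\Aut(\FF_0)=29-6=23<27=\dim\Mm^1_{12,4}$, so it maps onto a proper sublocus of the $4$-gonal locus and its irreducibility cannot be inherited from $\Mm^1_{12,4}$. The paper proves it by showing the $\FF_2$-curves in $|\Oo_{\FF_2}(4h+9f)|$ are specializations of the $\FF_0$-curves in $|\Oo_Q(4,5)|$, degenerating the smooth-quadric model of the dual curve to the quadric cone (\cite[Propositions 1.6 \& 2.1]{Zeuthen}). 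Likewise the uniqueness of the very ample $g^5_{13}$, which you cite as ``uniqueness of the residual $g^3_9$,'' is precisely what must be proved, for \emph{every} smooth member: on the trigonal side the paper shows any complete $g^5_{13}$ has residual $3g^1_3$ (a birationally very ample $g^3_9$ would make $X$ an extremal curve on a quadric in $\PP^3$, hence $4$-gonal, contradicting Castelnuovo--Severi; a compounded $g^3_9$ with base points makes $X$ bielliptic, again impossible), and on the $4$-gonal side it invokes Proposition \ref{z5}. The converse existence statement --- that \emph{every} trigonal curve of genus $12$ occurs, i.e.\ $|K_X-3g^1_3|$ is base-point-free and very ample --- also needs the Castelnuovo-bound argument, the cone case $m(X)=3$ needs its own computation on $\FF_4$ (Lemma \ref{maroni} covers only $\FF_0,\FF_2$), and finally the two families must be shown to be distinct components (gonality semicontinuity together with $\dim\Gg_3=25>23=\dim\Gg_4$).

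In part (v), irreducibility of $\HO{13,9,5}$ does not follow from ``irreducibility of $\Gg^5_{13}$ over a general curve'': since $g+5=14>13$, Remark \ref{trivial} is indeed inapplicable, but uniqueness of the component dominating $\Mm_9$ leaves open components supported over special curves --- concretely, trigonal curves embedded by incomplete subseries of the very ample $|K_X-g^1_3|=g^6_{13}$. The paper eliminates these by the count $\dim\Mm^1_{9,3}+\dim\mathbb{G}(5,6)=2g+7=25<\lambda(13,9,5)=27$ and then quotes \cite[Theorem 2.2]{JPAA} for irreducibility of $\HL{13,9,5}$; some such exclusion is indispensable, and your ``$\Oo_X(1)$ is complete'' is likewise an assertion, not a consequence. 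A smaller slip of the same kind occurs in (iv): uniqueness of the $g^1_5$ on a general $5$-gonal curve of genus $10$ is not a consequence of $\rho(5,10,1)<0$ (Castelnuovo--Severi allows two independent $g^1_5$'s whenever $g\le 16$); the paper derives it, together with very ampleness of $|K_X-g^1_5|$, from the Clifford-index theorem of \cite{B}.
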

\begin{proof}
\begin{itemize}
\item[(i)] Since $\pi(13,5)=12$, $\HO{13,g,5}=\emptyset$ if $g\ge 13$.
\item[(ii)] If $g=12$, $X\in\Hh_{13,12,5}$ is an extremal curve lying on a quartic.  $X$ does not lie on a Veronese surface since $\deg X=13$ is odd. 

\item[(a)]
Suppose $X$ lies on a cone $S_0$ over a rational quartic in $\PP^4$. By Remark \ref{cone1}\,(b), we have $k=3, d=13=(r-1)k+m, m=1$. Let $\w{X}$ be the strict transformation of a smooth $X\subset S_0\subset\PP^5$ under the desingularization $\FF_4\stackrel{|h+4f|}{\to} S_0$. Put $\w{X}\in|\Oo_{\FF_4}(ah+bf)|$. We have
$d=\deg X= (ah+bf)\cdot(h+4f)=b$ and $K_{\FF_4}=|-2h-6f|$, thus by adjunction, $g=\left( a-1 \right)  \left( b-2\,a-1 \right) $, $\w{X}\in|\Oo_{\FF_4}(3h+13f)|$ and $|f|_{|\w{X}}=g^1_3$.

We now compute the Maroni invariant of $X\subset S_0\subset\PP^5$. Recall that for each $t\ge 1$, $a, b>0$,
$h^0(\FF_4,(-ah+(-b+t)f)) = 0,$ 
$h^0(\FF_4,\Oo_{\FF_4}(tf)) =t+1,$
 $h^1(\FF_4,\Oo_{\FF_4}(tf)) =0.$
From the long cohomology sequence of the exact sequence 
\begin{equation}\label{eqa23}
0\to \Oo_{\FF_4}(-ah+(-b+t)f)\to \Oo_{\FF_4}(tf) \to \Oo_{\w{X}}(tf)\to 0
\end{equation}
$h^0(\w{X},\Oo_{\w{X}}(tf)) =t+1$ if and only if $h^1(\FF_4,(-ah+(-b+t)f)) =0.$ By duality, $h^1(\FF_4,(-ah+(-b+t)f)) =h^1(\FF_4,((a-2)h+(b-6-t)f)) =0.$
Since $h^1(\Oo_{\FF_e}(h+mf))=|\min(0, m-e+1)|, $ by taking $a=3, b=13, e=4, m=b-6-t$ 
we have $h^1(\FF_4,(h+(7-t)f)) =0$ if and only if  $t\le 4$. Thus $m(X)=3.$

\item[(b)]

Suppose $X\subset S$ ($S\cong\FF_0\cong S_{2,2}$ or $S\cong\FF_2\cong S_{1,3}$). By solving \eqref{sd}, \eqref{sg} and by \eqref{sdn}, we have $X\in \Mm\cup\Nn$ where
$$\Mm:=|3H+L|,
\Nn:=|4H-3L|, \dim\Mm=31, \dim\Nn=29.$$
For $i=0,2$, we set $\Ff_{3,i}, \Ff_{4,i}\subset\Gg^5_{13}$ as
\begin{align*}
&\Ff_{3,i}:=\{|\Oo_X(1)| : X\in \Mm, X\subset\FF_i\}, \Ff_{4,i}:=\{|\Oo_X(1)| : X\in\Nn, X\subset \FF_i\} ~~~\&\\
&\Ff_{3,4}:=\{|\Oo_X(1)| : X\subset S_0, X\cong\w{X}\in|3h+13f| ~\mathrm{on} ~\FF_4\}\subset\Gg^5_{13}.
\end{align*}
\noindent
Since $\dim\Aut(\FF_0)=6$, $\dim\Aut(\FF_2)=7$, $\dim\Aut(\FF_4)=9$, we have
\vnii
$\dim\Ff_{3,i}=\dim\Mm-\dim\Aut(\FF_i)>\lambda(13,12,5)=21 \textrm{},$
\vnii
$\dim\Ff_{3,4}=\dim|\Oo_{\FF_4}(3h+13f)|-\dim\Aut(\FF_4)>\lambda(13,12,5) \textrm{},$
\vnii
$\dim\Ff_{4,i}=\dim\Nn-\dim\Aut(\FF_i) >\lambda(13,12,5).$
\vnii
\vnii
By Lemma \ref{maroni} and \eqref{F0},  in case $X\in\Mm$, we have 
\vnii
$X\in|\Oo_Q(3,7)|, m(X)=5$;  if $X\subset\FF_0,$
\vnii
$X\in|\Oo_{\FF_2}(3h+10f)|, m(X)=4$; if $ X\subset\FF_2$,
\vnii
$\w{X}\in|\Oo_{\FF_4}(3h+13f)|, m(X)=3 \mathrm{~ if ~} \w{X}\subset\FF_4,$ i.e. when 
$X$ lies on a cone. 
In general, the Maroni invariant of a curve of genus $g$ is in the range

\centerline{$(g-4)/3\le m\le (g-2)/2.$ }

\noindent
In our case $g=12$, $8/3<3\le m\le  5$, thus every possible Maroni invariant is realized by curves in $\HL{13,12,5}$ given by the linear series in the family $\Ff_{3,0}\cup\Ff_{3,2}\cup\Ff_{3,4}$. 

\vni
If $X\in\Nn$, by Lemma \ref{maroni} and \eqref{F0}, we have 
\begin{equation}
\begin{cases}
X\in|\Oo_Q(4,5)|,  m(X)=3; X\subset\FF_0,\\
X\in|\Oo_{\FF_2}(4h+9f)|, m(X)=1;  X\subset\FF_2.\label{f4}
\end{cases}
\end{equation}
\item[(c)] Let $X$ be any trigonal curve of genus $g=12$. We claim that 
$|K_X-3g^1_3|$ is \bpf and very ample: 
Suppose $|K_X-3g^1_3|$ has a base locus $\Delta, \deg\Delta =\delta\ge 1$. The \bpf ~$g^5_{13-\delta}$ is not  birationally very ample 
by the \cgb; $\pi(13-\delta, 5)\le 10$. Hence the compounded $g^5_{13-\delta}$ induces double covering onto an elliptic curve and $X$ has a \bpf $g^1_4$, a contradiction by Castelnuovo-Severi inequality. Thus, $\delta =0$ and $|K_X-3g^1_3|=g^5_{13}$ is 
birationally very ample since $13$ is prime. Furthermore by the \cgb, we have $p_a(X)=12$, hence the image curve in $\PP^5$ induced by
$|K_X-3g^1_3|$ is an extremal curve (smooth). Thus $|K_X-3g^1_3|$ is very ample. 
\vnii

Conversely we claim that every  (complete)  $g^5_{13}$ on a trigonal curve is of the form $|K_X-3g^1_3|$. Set
$\Ee:=|K_X-g^5_{13}|=g^3_9$. If $\Ee$ is birationally very ample, $
\Ee$ is \bpf and  induces an 
extremal curve in $\PP^3$ lying on a quadric surface of type $(4,5)$. Hence $X$ has a \bpf $g^1_4$, which is impossible by Castelnuovo-Severi inequality. Thus $\Ee$ is compounded, \bpf since a compounded $\Ee=g^3_9$ with non-empty base locus is bielliptic which is impossible by Castelnuvo-Severi inequality.
Thus we have $|K_X-g^5_{13}|=g^3_9=3g^1_3$ which shows that every complete $g^5_{13}$ is of the form $|K_X-3g^1_3|$ and is very ample. 
Therefore we have an isomorphism
\begin{align*}  &\Mm^1_{g,3}\setminus\Mm^1_{g,2}&\to\hskip18pt&\Gg_3\subset\Gg^5_{13} \\
&\hskip 18pt([X],g^1_3)&\mapsto\hskip18pt &([X], |K_X-3g^1_3|),
\end{align*} thus $\Gg_3:=\Ff_{3,0}\cup\Ff_{3,2}\cup\Ff_{3,4}\subset\Gg^5_{13}$ consists of all the very ample $g^5_{13}$'s on trigonal curves. $\Gg_3$ is irreducible by the irreducibility of $\Mm^1_{g,3}$, thus there is an irreducible family $\Hh_3\subset\HO{13,12,5}$ which is $\Aut(\PP^5)$-bundle over $\Gg_3$ surjecting onto $\Mm^1_{g,3}\setminus\Mm^1_{g,2}$, $\mu^{-1}\mu(X)=\Aut(\PP^5)X$ for $X\in\Hh_3$. 
\item[(d)] Let $X^\vee\subset\PP^3$ be the dual curve of $X\subset\PP^5$ -- which is by definition the curve induced by the moving part of  $|K_X(-1)|$. 
For $X\in\Ff_{4,0}$,  $|K_X(-1)|=|K_Q+X-\Oo_Q(1,2)|_{|X}=|\Oo_Q(1,1)|_{|X}$.   For $X\in\Ff_{4,2}$, $|K_X(-1)|=|\Oo_{\FF_2}(h+2f)|_{|X}$; cf. \eqref{f4}. Since $|\Oo_{\FF_2}(h+2f)|$ induces a birational morphism onto a quadric cone $Q_0\subset\PP^3$, $X^\vee$ is a smooth curve which is a residual to a line in a complete intersection of $Q_0$ with a quintic surface. Since a curve in $Q_0$ is a specialization of curves in smooth quadric $Q$ (\cite[Propositions 1.6 \& 2.1]{Zeuthen}), $\Ff_{4,2}$ is in the boundary of $\Ff_{4,0}$ and hence $\Gg_4:=\Ff_{4,0}\cup\Ff_{4,2}$ is an irreducible family with $\dim\Gg_4=\dim\Ff_{4,0}=23$. 

\item[(e)] Note that $\dim\Gg_3>\dim\Gg_4$. By semi-continuity of gonality, we deduce that 
 $\Gg_3$ and $\Gg_4$  are two distinct {\it components} of $\Gg^5_{13}$.
\item[(f)] For every  $X\in \Hh_{4,0}$ -- the component of $\HO{13,12,5}$ sitting over $\Ff_{4,0}$ -- $\mu (X)$ has a unique $g^1_4$, a unique $g^1_5$ and hence a unique (very ample) $g^5_{13}=|2g^1_4+g^1_5|$ by Proposition \ref{z5}.  Thus $\mu^{-1}\mu(X)=\Aut (\PP^5)X$ for every  $X\in\Hh_{4,0}$.
We also have $\mu^{-1}\mu(X)=\Aut(\PP^5)X$ for every  $X\in\Hh_{4,2}$ since $|\Oo_X(1)|=|\Oo_{\FF_2}(4h+9f)\tensor\Oo_X|$. 

\end{itemize}
\vnii Since $X\in\HO{13,12,5}$ is an extremal curve, $X$ is ACM with the Hilbert function
$h^0(\PP^n,\Ii_X(t)) =\binom{5+t}{5} -13t-1+12,  t\ge 2$; cf. Remark \ref{basic1} \eqref{exthil}.
\begin{itemize}
\item[(iii)] $g=11$:  Since $\pi(13,6)=9<g$, $\HO{13,g,5}=\HL{13,g,5}$. By \cite[Proposition 3.3]{lengthy}, $\HO{13,g,5}$ is irreducible of the expected dimension. A general $X\in\HO{13,11,5}$ has a plane model $C$, $\deg C=7$ with $4$ nodes induced by $|K_X(-1)|=g^2_7$ which corresponds  to a general element of the Severi variety $\Sigma_{7,11}$.
The normalization of a general element of ${\Sigma_{7,11}}$ has a unique \bpf $g^2_7$ \cite[Theorem 3.9]{treger}, hence the very ample $g^5_{13}=|K_X(-g^2_7)|$ is {\it unique}. Thus $\mu^{-1}\mu(X)=\Aut(\PP^5)X$ for  a general $X\in\HO{13,11,5}$ and $X$ is $5$-gonal by \cite{Coppens0}.

For the Hilbert function of a general $X\in \HO{13,11,5}$,  we first show that 
$h^1(\Ii_X(2))=0$. Assume $h^1(\Ii_X(2)) >0$, i.e. assume $h^0(\Ii_X(2))=h^0(\Oo_{\PP^5}(2))-h^0(X,\Oo_X(2))+h^1(\Ii_X(2))\ge 6$. By the same routine as we did in the proof of Theorem \ref{d=15} (iv-I), 
we may deduce that $X\subset T$, $\deg T=4$. 
However, this is not possible under our current situation $(d,g)=(13,11)$ by Remark \ref{cone1}-(0), (1b), (2c).
Thus, $h^1(\Ii_X(2))=0$ follows. By Remark \ref{pre2} (iii), $X$ is ACM. 
\item[(iv)]
If $g=10$, $\HL{13,10,5}$ is irreducible of the expected dimension by \cite[Theorem 2.3]{JPAA}.  Assume the existence of a component $\Hh\neq\HL{13,10,5}$.  For a general $X\in\Hh$, $\dim|\Oo_X(1)|\ge 6$ and hence $|K_X(-1)|=g^s_5, s\ge 2$.  If $|K_X(-1)|$ is biratioanally very ample, we have $g\le 6$, an absurdity.  If $|K_X(-1)|$ is compounded with  non-empty base locus, $X$ is hyperelliptic which does not carry a 
special very ample linear series $g^5_{13}$. Thus $\Hh_{13,10,5}=\HL{13,10,5}$ is irreducible. Since $|K_X(-1)|=g^1_5$,  we have 
$\mu(\HO{13,10,5})\subset\Mm^1_{g,5}$.
For a general $5$-gonal curve $[X]\in\Mm^1_{g,5}$, $|K_X(-g^1_5)|=g^5_{13}$ is very ample; otherwise  $|g^1_5(p+q)|=g^2_7$ for some $p+q\in X_2$, which has the same Clifford index as $g^1_5$, 
contradicting the fact that the Clifford index of a general $5$-gonal curve is only computed by the unique $g^1_5$; cf. \cite[Theorem]{B}. Conversely, on a general $X\in\HO{13,10,5}$, every very ample (\& complete) $g^5_{13}$ is of the form $|K_X-g^1_5|$. Thus we have $\Gg\stackrel{bir}{\cong}\Mm^1_{g,5}$,  $\HO{13,10,5}$ dominates $\Mm^1_{g,5}$ and $\mu^{-1}\mu(X)=\Aut(\PP^5)X$ for general $X\in\HO{13,10,5}$.

For the computation of the Hilbert function of a general $X\in \HO{13,10,5}$,  we claim that 
$h^1(\Ii_X(2))=0$. Assume $h^1(\Ii_X(2)) >0$. 
i.e. assume $h^0(\Ii_X(2))=h^0(\Oo_{\PP^5}(2))-h^0(X,\Oo_X(2))+h^1(\Ii_X(2))\ge 5$. Since $X$ is linearly normal, the restriction map $H^0(\Ii_X(2)) \to H^0(H,\Ii_{X\cap H,H}(2))$ is an isomorphism by Remark \ref{pre2}\,(iv), and hence $h^0(H,\Ii_{X\cap H,H}(2))\ge 5$.
By Remark \ref{aa1}, $X\cap H$ is contained in a rational normal curve or an elliptic normal curve $C\subset H$.  
By the same routine, we have $X\subset T$, a surface with $\deg T=4,5$. The case $\deg T=4$, 
 is not possible under our current situation $(d,g)=(13,10)$ by Remark \ref{cone1}-(0), (1b), (2c).
 Assume $X\subset T$, $\deg T=5$ and $T\cong\PP^2_4\subset\PP^5$ is smooth. Indeed, there is a very ample $L=(7; 3, 2^2,1)$ such that $X\in L$, $\deg X=L\cdot -K_T=13, g=10$. However, by \eqref{nofull}, the family of smooth
curves $X$ lying on a smooth del Pezzo does not constitute a full component. By Remark \ref{pre2} (iii), $X$ is ACM.


\vnii
\item[(v)] $g=9$:  By \cite[Theorem 2.2]{JPAA} and 
\cite[Theorem 2.1]{lengthy}
$\HL{13,g,5}$ is irreducible of the expected dimension. Suppose there exist very ample $\Dd'=g^6_{13}$; $|K_X-\Dd'|=g^1_3$. On a trigonal curve of genus $g\ge 7$, $|K_X-g^1_3|=g^6_{13}$ is very ample. Therefore
the family $\Ff\subset\Gg^5_{13}$ consisting of incomplete $g^5_{13}$'s sitting over $\Mm^1_{g,3}$ has dimension
$\dim\Ff=\dim\Mm^1_{g,3}+\dim\mathbb{G}(5,6)=2g+7<\lambda(13,g,5),$
which does not constitute a full component of $\HO{13,9,5}$. 
Thus $\Hh_{13,9,5}=\HL{13,9,5}$ is irreducible.

Let $X_K\subset\PP^8$ be the canonical curve of a non-hyperelliptic curve $X$ of genus $g=9$. For a general choice $p+q+r\in X_3$, the 
the projection $\pi_L: X_K\to \pi_L(X_K)\subset\PP^5$ with center at $L=\bar{pqr}$ is an embedding onto a linearly normal curve with $\deg\pi_L(X_K)=\deg K_X-3=13$. Conversely, any linearly normal smooth curve 
with $(d,g)=(13,9)$ in $\PP^5$ is obtained this way.
For a general $X\in \HL{13,9,5}$ -- which is necessarily non-hyperellitic and non-trigonal -- 
$G^5_{13}(X)\stackrel{\eta}{\cong} X_3$ where $\eta(\Dd)=|K_X-\Dd|\in X_3$.  Thus $\mu^{-1}\mu(X)=G^5_{13}(X)\times\Aut(\PP^5)X\cong X_3\times
\Aut(\PP^5)X$.


The restriction map $H^0(\PP^5, \Oo(t))\to H^0(X, \Oo_X(t))$ is of maximal rank by \cite{l8}; $\rho(13,g,5)>0$. 
Since $X$ is non-degenerate, $h^1(\Ii_X(1))=0$, $h^1(\Ii_X(t))=0$ for all $t\ge 2$ by Remark \ref{lars} \eqref{hil} and $X$ is ACM. 
\item[(vi)] If $g\le 8$, the irreducibility of $\HO{13,g,5}$ follows from Remark \ref{trivial}, which is a result in  \cite[Theorem 2.1]{PAMS}. In the proof (loc. cit.), it is shown that $\HO{13,g,5}$ dominates $\Mm_g$ and for a 
general $X\in\HO{d,g,r}$, $|\Oo_X(1)|$ is non-special.
Thus,  $\Gg\subset\Gg^5_{13}$ which $\HO{13,g,5}$ sits over is the Grassmannian bundle
with fibre $\mathbb{G}(5,13-g)$ over $\mathcal{J}_{d}=\mathcal{W}^{13-g}_d$, where 
$\mathcal{J}_{d}$ is the universal Jacobian of degree $d$. Thus, for a general
$X\in \HO{13,g,5}$, we have  
$\mu^{-1}\mu(X)=\Jac_{13}(X)\times\overset{o}{\mathbb{G}}(5,13-g)\times\Aut(\PP^5)X; \,2\le g\le 8.$

For Hilbert function of a general $X\in\HO{13,g,5} ~\mathrm{with}~ g= 8$, by the same reason 
as $g=9$ case (v), $X$ is ACM. 
For $g\le 7$, $h^1(\Ii_X(1))=8-g$,  $h^1(\Ii_X(t))=0$ for all $t\ge 2$ and $X$ is {\it not} ACM. 
\end{itemize}
\vspace*{-\baselineskip}
\end{proof}
\vspace{-12pt}
\vspace{-12pt}
\section {Curves of degree $d=12$ in $\PP^5$ }

\begin{proposition}\label{d=12}
\begin{itemize}
\item[\rm{(i)}] $\HO{12,g,5}=\emptyset$ if $g\ge 11$.
\item[\rm{(ii)}]  If $g=10$, $\HO{12,g,5}=\HL{12,g,5}\neq\emptyset$ is reducible with two components.
Every smooth  $X\in\HO{12,g,5}$ is ACM and $\mu^{-1}\mu(X)=\Aut(\PP^5)X$.
\item[\rm{(iii)}]  If $g=9$, $\HO{12,g,5}=\HL{12,g,5}\neq\emptyset$ is irreducible, a general $X\in\HO{12,g,5}$ is ACM, $\mu$ dominates ${\Mm^1_{g,4}}$ and $\mu^{-1}\mu(X)=\Aut(\PP^5)X$.
\item[\rm{(iv)}]  If $g=8$, $\HO{12,g,5}=\HL{12,g,5}$ is irreducible, a general  $X\in \HO{12,g,5}$ is ACM,
$\mu^{-1}\mu (X)=X_2\times\Aut(\PP^5)X$, $\mu(\HO{12,g,5})=\Mm_g\setminus\Mm^1_{g,2}$.
\item[\rm{(v)}]  $\HO{12,g,5}=\HL{12,g,5}$ is irreducible if $g=7$.
If $g\le 6$, $\HO{12,g,5}$ is irreducible and $\HL{12,g,5}=\emptyset$.
A general $\HO{12,g,5}$ is ACM (resp. not ACM) for $g=7$ (resp. for $2\le g\le 6$), 

$\mu^{-1}\mu(X)=\Jac_{12}(X)\times\overset{o}{\mathbb{G}}(5,12-g)\times\Aut(\PP^5)X; 2\le g\le 7.$
\end{itemize}
\end{proposition}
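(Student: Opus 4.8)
The plan is to stratify $\HO{12,g,5}$ by genus using the Castelnuovo data $\pi(12,5)=10$, $\pi_1(12,5)=9$, $\pi(12,6)=7$, which I would record first. Part (i) is then immediate: a non-degenerate integral degree-$12$ curve has $p_a\le\pi(12,5)=10$, so $\HO{12,g,5}=\emptyset$ for $g\ge 11$. The same numerics run the linear-normality bookkeeping used below. Since $\pi(12,6)=7$, a non-degenerate degree-$12$ curve of genus $g\ge 8$ is not an isomorphic projection of a curve in $\PP^6$, whence $\HO{12,g,5}=\HL{12,g,5}$ for $g\ge 8$; and by Riemann--Roch a linearly normal such curve has $h^1(\Oo_X(1))=g-7$, so $\HL{12,g,5}=\emptyset$ for $g\le 6$, while $g=7$ sits at the non-special Halphen boundary $d=g+5$. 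Finally $\rho(12,g,5)=42-5g$, placing us in the Brill--Noether range exactly for $g\le 8$.

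For (ii), $g=10=\pi(12,5)$, every curve is extremal, hence ACM with Hilbert function \eqref{exthil} by Remark \ref{basic1}, and lies on a minimal-degree surface; I would run through Remark \ref{cone1}. The Veronese case forces $X$ to be the $2$-uple image of a smooth plane sextic (genus $\binom{5}{2}=10$), and \eqref{plane} gives $h^1(N_{X,\PP^5})=0$, so these form a generically smooth component $\Hh_{\mathrm{Ver}}$ with $\dim\Hh_{\mathrm{Ver}}=19+\dim\Aut(\PP^5)=54=\Xx(12,10,5)$; uniqueness of the $g^2_6$ on a plane sextic makes $|\Oo_X(1)|=|\Oo_C(2)|$ unique. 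Solving \eqref{sd}--\eqref{sg} and reading Remark \ref{cone1}(1b) shows the scroll and cone cases all give trigonal curves in $|3H|$ on $\FF_0,\FF_2$ (resp. on $\FF_4$); arguing as in Proposition \ref{d13}(ii), I would set $\Ee:=|K_X-\Oo_X(1)|$, a $g^2_6$, and use Castelnuovo--Severi (with Remark \ref{bih}) to force $\Ee=2g^1_3$, so every complete very ample $g^5_{12}$ on a trigonal curve equals $|K_X-2g^1_3|$. Irreducibility of $\Mm^1_{10,3}$ and uniqueness of the $g^1_3$ then assemble these into one component $\Hh_{\mathrm{trig}}$ of dimension $\dim\Mm^1_{10,3}+35=21+35=56$, again with $|\Oo_X(1)|$ unique. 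Lower semicontinuity of gonality ($5$ versus $3$) separates $\Hh_{\mathrm{Ver}}$ from $\Hh_{\mathrm{trig}}$, yielding exactly two components, each with $\mu^{-1}\mu(X)=\Aut(\PP^5)X$ for every smooth $X$.

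For (iii), $g=9=\pi_1(12,5)$, I would import irreducibility of $\HL{12,9,5}$ and dominance of $\mu$ onto the $4$-gonal locus $\Mm^1_{9,4}$ from the relevant existence result (as \cite[Theorem 2.3]{JPAA} was used for $(13,10)$ and $(14,11)$); a general $X$ is then a general $4$-gonal curve embedded by $|K_X-g^1_4|$, and since this $g^1_4$ is unique ($\rho(4,9,1)<0$) the series $|\Oo_X(1)|$ is unique, giving $\mu^{-1}\mu(X)=\Aut(\PP^5)X$. Although genus-$9$ degree-$12$ curves do lie on smooth del Pezzo quintics (class $(7;3,2^3)$, satisfying \eqref{delPC}), \eqref{nofull} — here $12>\tfrac{1}{5}(3\cdot 9+32)$ — shows they occupy only a codimension-one sublocus, contributing no extra component. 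For ACM, since $10\le d\le 13$ it suffices by Remark \ref{pre2}(iii) to prove $h^1(\Ii_X(2))=0$: if it failed, then $h^0(\Ii_X(2))\ge 6$, so by Remark \ref{pre2}(iv) and Remark \ref{aa1}(i) the general hyperplane section would lie on a rational normal curve, forcing $X$ onto a quartic (minimal) surface by the quadric-intersection routine of Theorem \ref{d=15}; but no $(d,g)=(12,9)$ curve lies on a minimal surface by Remark \ref{cone1}, a contradiction.

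In the Brill--Noether range $g\le 8$ the structure is governed by Remark \ref{lars} and Remark \ref{trivial}. For $g=8$ we have $\HO=\HL$, and every complete $g^5_{12}$ has $h^1=1$ by Riemann--Roch, hence equals $|K_X-(p+q)|$ with $p+q\in X_2$, very ample exactly off the hyperelliptic locus; this exhibits $\HO{12,8,5}$ as an $\Aut(\PP^5)$-bundle over an $X_2$-fibration on $\Mm_8\setminus\Mm^1_{8,2}$, giving irreducibility, $\mu^{-1}\mu(X)=X_2\times\Aut(\PP^5)X$, and (via maximal rank) ACM. For $g\le 7$, irreducibility and dominance of $\Mm_g$ follow from Remark \ref{trivial} ($d\ge 2g-7$, $g+5\le d=12$); the general complete series is non-special, so $X$ is cut from a $g^{12-g}_{12}$, yielding $\mu^{-1}\mu(X)=\Jac_{12}(X)\times\overset{o}{\mathbb{G}}(5,12-g)\times\Aut(\PP^5)X$, with $\HL=\emptyset$ for $g\le 6$, and maximal rank delivers the Hilbert function and the ACM dichotomy ($g=7$ ACM, $g\le 6$ not, since then $h^1(\Ii_X(1))=7-g$). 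I expect the main obstacle to be the reducible extremal case (ii): showing that the Veronese and trigonal families exhaust all components and that the trigonal $g^5_{12}$ is forced to be $|K_X-2g^1_3|$ (the Castelnuovo--Severi and special-series analysis), together with correctly importing the irreducibility-and-dominance input underlying (iii).
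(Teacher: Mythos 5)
Your overall architecture --- (i) from the Castelnuovo bound, (ii) by splitting the extremal curves into the Veronese family and the trigonal scroll/cone family and separating them by gonality and by $h^1(N_{X,\PP^5})=0$, (iii) by importing irreducibility and dominance over $\Mm^1_{9,4}$ and then proving $h^1(\Ii_X(2))=0$, (iv)--(v) by the canonical-projection and Brill--Noether/maximal-rank arguments --- is essentially the paper's, and parts (i), (ii), (iv), (v) are sound as you present them; in particular your explicit Castelnuovo--Severi argument in (ii) that every complete very ample $g^5_{12}$ on a trigonal curve of genus $10$ equals $|K_X-2g^1_3|$ is exactly the device the paper uses in its degree-$13$ case and is a legitimate (slightly more explicit) way to get the fibre description.

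The genuine gap is in your ACM argument for (iii). You claim that ``no $(d,g)=(12,9)$ curve lies on a minimal surface by Remark \ref{cone1},'' but this is false: $(12,9)$ is not among the excluded pairs of Remark \ref{cone1}(2c), and indeed \eqref{F0} is solved by $(a,b)=(4,4)$ and \eqref{F2} by $(a,b)=(4,8)$, so smooth curves with $(d,g)=(12,9)$ do lie on smooth quartic scrolls of both types. (This is even consistent with your own statement $\pi_1(12,5)=9$: genus $9$ is precisely the boundary value at which curves may or may not lie on a minimal-degree surface.) Consequently, arriving at ``$X\subset T$ with $\deg T=4$'' is not yet a contradiction, and the conclusion must be extracted from the \emph{generality} of $X$, which is what the paper does case by case: if $T\cong\FF_0$, then $X\in|\Oo_Q(4,4)|$ carries two distinct $g^1_4$'s cut by the two rulings, contradicting the uniqueness of the $g^1_4$ on a general $4$-gonal curve of genus $9$ (recall $\mu$ dominates $\Mm^1_{9,4}$); if $T$ is a cone, Remark \ref{cone1}(1b) forces $g=10$ (equivalently trigonality, incompatible with the $g^1_4$ by Castelnuovo--Severi); and if $T\cong\FF_2$, then $X\in|\Oo_{\FF_2}(4h+8f)|$ has only the single $g^1_4$ cut by the ruling, so no gonality argument applies --- here the paper instead counts dimensions,
\begin{equation*}
\dim|\Oo_{\FF_2}(4h+8f)|-\dim\Aut(\FF_2)+\dim\Aut(\PP^5)=52<56=\Xx(12,9,5),
\end{equation*}
so such curves cannot fill a component and in particular do not contain the general member of the irreducible, expected-dimensional $\HO{12,9,5}$. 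Without this case analysis --- especially the dimension count for the $\FF_2$ case, which cannot be replaced by an appeal to Remark \ref{cone1} or to gonality --- your proof of $h^1(\Ii_X(2))=0$, and hence of the ACM assertion in (iii), does not go through.
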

\begin{proof}
\begin{itemize}
\item[(i)] 
Since $g\le\pi(12,5)=10$, $\HO{12,g,5}=\emptyset$ if $g\ge 11$.
\item[(ii)] If $g=10$, $X\in\Hh_{12,10,5}$ is an extremal curve lying on a scroll, a rational normal cone or a Veronese.

\item[(a)]
In case $X$ lies on a Veronese surface, $X$ is the image of a smooth plane sextic. Let $\Hh_0$ be the irreducible family consisting of the images of smooth plane sextics under the Veronese embedding in $\PP^5$. Let $$\Ff_{0}:=\{ ~|\Oo_C(2)|~ |~  [C]\in\PP(H^0(\PP^2,\Oo(6)))/\Aut(\PP^2)\}\subset\Gg^5_{12}. $$
We have 
\begin{align*}
\dim\Hh_0&=\dim\Ff_0+\dim\Aut(\PP^5)\\
&=\dim\PP(H^0(\PP^2,\Oo(6)))-\dim\Aut(\PP^2)+\dim\Aut(\PP^5)\\&
=\Xx(12,10,5)=54.
\end{align*}
For a smooth curve $X\subset S\subset\PP^5$  of even degree $d=2a=12$
lying on a Veronese surface $S$,  by \eqref{plane}, we have
$h^1(N_{X,\PP^5})=0$
implying
$$h^0(N_{X,\PP^r})=\Xx(d,g,r)+h^1(N_{X,\PP^r})=\dim\Hh_0.$$
Thus $\Hh_0$ is a {\it generically reduced component} of the expected dimension. 
Since
a smooth plane curve of degree $a\ge 4$ has a unique $g^2_a$ and a unique $g^5_{2a}$, $\mu^{-1}\mu(X)=\Aut(\PP^5)X$ for any smooth $X\in\h{H}_0$.

\item[(b)]
Assume $X\subset S$, where $S$ is a smooth rational normal surface scroll; $S\cong\FF_0$ or $\FF_2$. By solving \eqref{sd} and \eqref{sg}, we have $X\in|3H|$; $\dim|3H|=27$ by \eqref{sdn}. 
If $X\subset\FF_0\cong \PP^1\times\PP^1=Q$, we have $X\in|\Oo_Q(3,6)|$ and $m(X)=4$ by Lemma \ref{maroni} and \eqref{F0}. 
If $X\subset\FF_2$, we have $X\in|3h+9f|$ and $m(X)=3$ 
by Lemma \ref{maroni} and \eqref{F2}. In both cases, $X$ is trigonal, $|L|_{|X}=g^1_3$, $|K_X|=|H+2L|_X$ and $|H|_{|X}=|K_X-2g^1_3|$. On any trigonal curve of genus $g\ge 10$, $|K_X-2g^1_3|$ is very ample, thus  every member in the family $$\Ff:=\{|K_X-2g^1_3| | X\in{\Mm^1_{g,3}\setminus}\Mm^1_{g,2}\}\subset\Gg^5_{12}$$ is very ample. 


\vni

\vni
\item[(c)]
Assume that $X\subset S_0$, a cone over a rational quartic in $H\cong\PP^4$.  By \eqref{conevertex1} we have $k=3, d=12=(r-1)k$, $m=0$, the ruling cut out $g^1_3$, $X$ does not pass through the vertex of $S_0$,  $\tilde{X}\cong X$, $3C_0+12f\equiv\tilde{X}$, where $\tilde{X}$ is the strict transformation of $X$ under $\FF_4\cong\tilde{S}_0\stackrel{{|C_0+4f|}}{\to}S_0\subset\PP^5$.
Since  $|K_{\FF_4}+\tilde{X}-\phi^*{(\Oo(1))}|=|2f|$, we have $|\Oo_X(1)|=|K_X-2g^1_3|$. By setting
$$\Ff_{cone}:=\{|\Oo_X(1)|~| X\subset S_0\subset\PP^5 \}\subset\Gg^5_{12},$$ 
we have $\Ff_{cone}\subset\Ff\subset{\Gg^5_{12}}$. Thus, curves $X\subset\PP^5$ with $(d,g)=(12,10)$ lying on a quartic surface scroll or a cone is trigonal 
which forms an irreducible family $\Hh\subset\HO{12,10,5}$ sitting over $\Ff$.
 By semi-continuity of gonality, $\Hh$ is a {\it component} $\&$
$\Hh\neq\Hh_0$.
For a smooth $X\in\Hh$ we have
$\mu^{-1}\mu(X)=\Aut(\PP^5)X$. By Remark \ref{basic1}, $X$ is ACM.




\item[(iii)]
If $g=9$, the irreducibility of $\HL{12,9,5}$ is a special case of \cite[Theorem 2.3]{JPAA} and
\cite[Theorem 2.1]{lengthy}. Suppose there is a component $\Hh\neq\HL{12,9,5}$. For a general $X\in\Hh$,
we have $|K_X(-1)|=g_{4}^\beta$ with $\beta\ge 2$, which is impossible by Clifford theorem since $X$ cannot be hyperelliptic. Thus $\HO{12,9,5}=\HL{12,9,5}$ is irreducible of the expected dimension. For a general $X\in\HL{12,9,5}$, $|K_X(-1)|=g^1_4$ is base-point-free, for otherwise 
$|K_X-g^1_4|=|K_X-(g^1_3+p)|$ is not very ample. Conversely, on a general $4$-gonal curve $X$ of genus $g=9$ (with a unique $g^1_4$), $|g^1_4+p+q|=g^1_6$ for any $p+q\in X_2$ by \cite[Theorem]{B}. Thus $|K_X-g^1_4|$ is the {\it unique} very ample $g^5_{12}$ and $\HO{12,9,5}$ dominates $\Mm^1_{g,4}$.
Thus, $\mu^{-1}\mu(X)=\Aut (\PP^5)X$ for general $X\in\HO{12,9,5}$.
\vnii
Claim; $h^1(\Ii_X(2))=0$ for a general $X\in\HO{12,9,5}$. 
Assume $h^1(\Ii_X(2)) >0$. By an argument parallel to the one we used in the proof of Theorem \ref{d=15}[(iv)-(I)], $X\subset T$ with  $\deg T=4$.


\begin{enumerate}
\item[(a)] If $T\cong\FF_0\cong Q$, $X\in|\Oo_Q(4,4)|$, $X$ has two distinct $g^1_4$'s, contradicting $X$ being a general $4$-gonal curve. 
\item[(b)] Assume
that $T$ is a cone over a rational normal curve. By Remark \ref{cone1}(1b), $X$ is trigonal, 
a contradiction by Castelnuovo-Severi inequality. 
\item[(c)]
Assume $T\cong \FF_2$ and set $X\in |\Oo_{\FF_2}(ah+bf)|$. We have 
$(a,b)=(4,8)$ from \eqref{F2}.
Since $\dim|4h+8f|-\dim\Aut(\FF_2)+\dim\Aut(\PP^5)=52<\Xx(12,9,5)=56,$ the family arising this way
does not constitute a full component. 
\end{enumerate}
By the Claim together with Remark \ref{pre2} (iii), $X$ is ACM.

\item[(iv)] We may argue in the following way as we did in Theorem \ref{d13} (v).
Let $X_K\subset\PP^7$ be the canonical curve of a non-hyperelliptic curve $X$ of genus $g=8$. For any choice $p+q\in X_2$, the 
the projection $\pi_L: X_K\to \pi_L(X_K)\subset\PP^5$ with center $L=\bar{pq}$ is an embedding, $\deg\pi_L(X_K)=\deg K_X-2=12$. Conversely, any smooth curve 
with $(d,g)=(12,8)$ in $\PP^5$ is obtained this way.
Thus for a general $X\in \HL{12,8,5}$ -- which is necessarily non-hyperellitic -- 
$G^5_{12}(X)\stackrel{\eta}{\cong} X_2$ where $\eta(\Dd)=|K_X-\Dd|\in X_2$.  Thus $\mu^{-1}\mu(X)=G^5_{12}(X)\times\Aut(\PP^5)\cong X_2\times
\Aut(\PP^5)X$.
Therefore it follows that $\HL{12,8,7}= \HO{12,8,7}$ is irreducible, $\mu$ surjects onto $\Mm_g\setminus\Mm^1_{g,2}$ and  $\dim\HO{12,8,5}=\dim\Mm_g+2+\dim\Aut(\PP^5)=\Xx(12,8,5).$

\vnii

Take a general $X\in\HO{12,g,5} ~\mathrm{with}~ g\le 8$.  Since $\rho(12,g,5)>0$, 
the restriction map $H^0(\PP^5, \Oo(t))\to H^0(X, \Oo_X(t))$ is of maximal rank by \cite{l8} with the Hilbert function is given by \eqref{hil} for all $t\ge 2$; cf. Remark \ref{lars}. $X$ is clearly ACM.
\item[(v)] We may copy the proof of Theorem \ref{d13} (vi). 
If $g\le 7$, the irreducibility of $\HO{12,g,5}$ follows from Remark \ref{trivial}. In this case,  $\Gg\subset\Gg^5_{12}$ over which $\HO{12,g,5}$ sits is just the Grassmannian bundle
with fibre $\mathbb{G}(5,12-g)$ over $\mathcal{J}_{d}=\mathcal{W}^{12-g}_d$. Thus, for a general
$X\in \HO{12,g,5}$, we have  
$$\mu^{-1}\mu(X)=\Jac_{12}(X)\times\overset{o}{\mathbb{G}}(5,12-g)\times\Aut(\PP^5)X; \,2\le g\le 7.$$
A general $X\in\HO{12,g,5}$ is ACM for $g=7$.  For a general $X\in\HO{12,g,5}, \,g\le 6$, we have $h^1(\Ii_X(t))=0$ $\forall t\ge2$,  and $h^1(\Ii_X(1))=7-g$.
\end{itemize}
\vspace*{-\baselineskip}
\end{proof}
\vspace{-12pt}
\section{Curves of  degree $d\le11$ in $\PP^5$}
\subsection {Curves of degree $d=11$ in $\PP^5$ } 
\begin{proposition} 
\begin{itemize}
\item[\rm{(i)}] 
$\HO{11,g,5}=\emptyset$ if $g\ge9$.
\item[\rm{(ii)}]  $\HO{11,8,5}=\HL{11,8,5}$ is irreducible of the expected dimension. Every smooth $X\in\HO{11,8,5}$
is trigonal, ACM and
$\mu^{-1}\mu(X)=\Aut (\PP^5)X.$
\item[\rm{(iii)}]  $\HO{11,7,5}$ is irreducible of the expected dimension, $\mu(\HO{11,7,5})=\Mm_{7}\setminus\Mm^1_{7,3}$, every $X\in\HO{11,7,5}$ is ACM and $\mu^{-1}\mu(X)\cong X\times\Aut(\PP^5)X$.
\item[\rm{(iv)}]  If $g\le 6$, $\HO{11,g,5}$ is irreducible of the expected dimension $\mu(\HO{11,g,5})=\Mm_g$. For a general $X\in\HO{11,g,5},$ 
$$\mu^{-1}\mu(X)=\Jac_{11}(X)\times\overset{o}{\mathbb{G}}(5,11-g)\times\Aut(\PP^5)X; 2\le g\le 6.$$
$X$ is ACM only if $g=6$.

\end{itemize}
\end{proposition}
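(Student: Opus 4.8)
The plan is to organize everything around the Castelnuovo bound $\pi(11,5)=8$ and to treat the four genus ranges separately. Part (i) is immediate: an integral non-degenerate curve of degree $11$ in $\PP^5$ has arithmetic genus at most $\pi(11,5)=8$, so $\HO{11,g,5}=\emptyset$ for $g\ge 9$. For (ii), a curve of genus $g=8=\pi(11,5)$ is extremal, hence ACM and lying on a minimal-degree (quartic) surface by Remark \ref{basic1}. I would exclude the Veronese (degree $11$ is odd) and, by Remark \ref{cone1}\,(1b), the rational normal cone, so that $X$ lies on a smooth scroll $\FF_0$ or $\FF_2$. Solving \eqref{F0} and \eqref{F2} forces $X\in|\Oo_Q(3,5)|$ or $X\in|\Oo_{\FF_2}(3h+8f)|$, both trigonal with a unique $g^1_3$ (genus $\ge 5$), and in either case adjunction gives $|\Oo_X(1)|=|K_X-g^1_3|$. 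Conversely, on any trigonal curve of genus $8$ the series $|K_X-g^1_3|$ is very ample: if $|g^1_3+p+q|$ were a $g^2_5$ it would be birationally very ample (a compounded $g^2_5$ is impossible), forcing $g\le \pi(5,2)=6$, a contradiction. Thus the very ample $g^5_{11}$ is exactly $|K_X-g^1_3|$ with $g^1_3$ unique, so $\HO{11,8,5}$ is the $\Aut(\PP^5)$-bundle over an irreducible family birational to $\Mm^1_{8,3}\setminus\Mm^1_{8,2}$ (the $\FF_0$- and $\FF_2$-curves realizing the Maroni strata $m=3$ and $m=2$). Irreducibility follows from that of the trigonal locus and uniqueness of the embedding gives $\mu^{-1}\mu(X)=\Aut(\PP^5)X$.

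For (iii) with $g=7$ the decisive observation is that the embedding line bundle is forced to be special. Indeed $h^1(\Oo_X(1))=h^0(K_X-\Oo_X(1))$ with $\deg(K_X-\Oo_X(1))=1$, so $h^1\le 1$; non-degeneracy forces $h^0(\Oo_X(1))\ge 6$, whence $h^1=1$, $X$ is linearly normal, and $\Oo_X(1)=|K_X-p|$ for a unique $p\in X$. Very ampleness of $|K_X-p|$ holds exactly when $X$ has no $g^1_3$ through $p$, i.e. when the gonality is $\ge 4$, so $\mu(\HO{11,7,5})=\Mm_7\setminus\Mm^1_{7,3}$. The assignment $p\mapsto|K_X-p|$ identifies $G^5_{11}(X)\cong X$, giving $\mu^{-1}\mu(X)\cong X\times\Aut(\PP^5)X$, and irreducibility of $\HO{11,7,5}$ follows since it is an $\Aut(\PP^5)$-bundle over the irreducible universal curve above the irreducible locus $\Mm_7\setminus\Mm^1_{7,3}$. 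For ACM I would argue uniformly: one has $h^1(\Oo_X(2))=0$ by degree, and if $h^1(\Ii_X(2))>0$ then by linear normality and Remark \ref{pre2}\,(iv) we get $h^0(H,\Ii_{X\cap H,H}(2))\ge 6$, so by Remark \ref{aa1}\,(i) the hyperplane section lies on a rational normal curve and hence $X$ on a quartic surface; but \eqref{F0} and \eqref{F2} have no integral solution for $(d,g)=(11,7)$, so no $(11,7)$ curve lies on a minimal-degree surface, a contradiction. Therefore $h^1(\Ii_X(2))=0$ and, since $d=11\le 13$, Remark \ref{pre2}\,(iii) yields ACM for \emph{every} $X$.

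For (iv), the range $g\le 6$ lies in Brill--Noether territory with $\rho(11,g,5)>0$ and $d=11\ge g+5$, so Remark \ref{trivial} gives irreducibility and dominance of $\Mm_g$, with $|\Oo_X(1)|$ non-special for general $X$. The general fibre is then obtained by choosing a degree-$11$ line bundle in $\Jac_{11}(X)$ together with a very ample $5$-dimensional subseries in $\overset{o}{\mathbb{G}}(5,11-g)$, yielding $\mu^{-1}\mu(X)=\Jac_{11}(X)\times\overset{o}{\mathbb{G}}(5,11-g)\times\Aut(\PP^5)X$. For $g=6$ one has $d=2g-1$, so the embedding is non-special and linearly normal, and maximal rank \cite{l8} gives $h^1(\Ii_X(2))=0$, hence ACM; for $g\le 5$ the curve is a proper projection, $h^0(\Oo_X(1))=12-g>6$, so $h^1(\Ii_X(1))=6-g>0$ and $X$ is not ACM, which is why ACM occurs only at $g=6$.

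The step I expect to be the main obstacle is part (iii). Because the embedding bundle $K_X-p$ is \emph{special}, the generic Brill--Noether and maximal-rank machinery does not apply directly, so both the precise image $\Mm_7\setminus\Mm^1_{7,3}$ of $\mu$ (pinning down the gonality threshold through very ampleness of $|K_X-p|$) and the ACM property for \emph{every} member, not merely the general one, must be established by hand. The uniform ACM hinges on the surface-exclusion argument above, i.e. on the non-existence of a minimal-degree surface through a $(11,7)$ curve; verifying this cleanly, rather than settling for the generic statement, is the delicate point.
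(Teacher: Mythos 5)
Your proposal is correct, and for parts (i), (ii) and (iv) it follows essentially the same skeleton as the paper: Castelnuovo bound, extremality and the trigonal characterization $|\Oo_X(1)|=|K_X-g^1_3|$ with its uniqueness for $g=8$, and Remark \ref{trivial} plus maximal rank \cite{l8} for $g\le 6$ (your explicit scroll computations $X\in|\Oo_Q(3,5)|$, $X\in|\Oo_{\FF_2}(3h+8f)|$ and the exclusion of a $g^2_5$ are details the paper asserts rather than writes out). The genuine divergence is in part (iii), in two places. For irreducibility, the paper cites \cite[Theorem 2.2]{JPAA} for $\HL{11,7,5}$ and then rules out non-linearly-normal components by Clifford's theorem, whereas you prove linear normality directly from $\deg(K_X-\Oo_X(1))=1$ and realize $\HO{11,7,5}$ as an $\Aut(\PP^5)$-bundle over the universal curve above $\Mm_7\setminus\Mm^1_{7,3}$; your route is self-contained where the paper's is citation-based, and both are valid. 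For the ACM property, the paper invokes Larson's maximal rank theorem via Remark \ref{lars} (using $\rho(11,7,5)>0$), which strictly speaking is a statement about a \emph{general} member of the dominating component, while you run the uniform-position/quadric argument (linear normality, Remark \ref{pre2}\,(iv), Remark \ref{aa1}\,(i), then non-existence of a minimal-degree surface through an $(11,7)$ curve since \eqref{F0}, \eqref{F2}, the cone case and the Veronese all fail numerically) -- exactly the technique the paper deploys for $d=13,14,15$ but not here. Your version buys the stronger conclusion actually claimed in the statement, namely that \emph{every} $X\in\HO{11,7,5}$ is ACM, at the cost of being longer; note only that the borderline hypothesis $\#(X\cap H)=11\ge 2n+1$ of Remark \ref{aa1}\,(i) is satisfied with equality, so the argument just barely goes through.
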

\begin{proof} 

\begin{itemize}
\item[(i)] Since $\pi(11,5)=8$, $\HO{11,g,5}=\emptyset$ if $g\ge 9$.
\item[(ii)] If $g=8$, $X\in\Hh_{11,8,5}$ is an (linearly normal) extremal curve, $\HO{11,8,5}=\HL{11,8,5}$.
Every extremal curve is ACM,  $h^1(\Ii_X(t))=0$ for all $t\ge 1$. For every smooth $X\in\Hh_{11,8,5}$, $|K_X(-1)|=g^1_3$ and is trigonal. Conversely, every trigonal curve $X$ with $g=8$, $|K_X-g^1_3|=g^5_{11}$ is very ample and vice versa, i.e. every very ample $g^5_{11}$ is of the form $|K_X-g^1_3|$. 
Thus, $\HO{11,8,5}$ dominates  $\Mm^1_{g,3}$ and is irreducible by the irreducibility of $\Mm^1_{g,3}$. 
By the {\it uniqueness} of a very ample $g^5_{11}$, 
$\mu^{-1}\mu(X)=\Aut (\PP^5)X$.


\item[(iii)] If $g=7$, by  \cite[Theorem 2.2]{JPAA}, $\HL{g+r-1,g,r}$ is irreducible having the expected dimension if $g\ge r+1$. Suppose there is a component $\Hh\subset\HO{11,7,5}$ such that $\Hh\neq\HL{11,7,5}$.  The hyperplane series $\Dd\subset|\Oo_X(1)|$ of a general $X\in\Hh$ is an incomplete $g^5_{11}$, thus $|\Dd|= g^\gamma_{11}$, $\gamma\ge6$. By Clifford's theorem $|\Dd|$ is non-special and hence
$\dim|\Dd|=11-g=4=\gamma,$ a contradiction. Thus $\HO{11,7,5}=\HL{11,7,5}$ and every smooth $X\in\HL{11,7,5}$ is a  projection from a point on the canonical curve of $X$. Recall that the canonical curve of a trigonal curve has one dimensional family of triscant lines so that the projection from a point on the canonical curve is singular in $\PP^5$. Thus $\mu(\HL{11,7,5})=\Mm_g\setminus\Mm^1_{g,3}$ and $\mu^{-1}\mu(X)\cong X\times\Aut(\PP^5)X$
for any $X\in\HO{11,7,5}$.

Obviously, $h^i(\Ii_X(1))=0, i=0,1$. Since $\rho(11,7,5)>0$, 
the restriction map $H^0(\PP^5, \Oo(t))\to H^0(X, \Oo_X(t))$ is of maximal rank by \cite{l8}
with the Hilbert function is given by \eqref{hil} for all $t\ge 2$; cf. Remark \ref{lars}. 
\item[(iv)]
If $g\le 6$, the irreducibility and the dominance of the moduli map $\mu$ follow from Remark \ref{trivial}. Since $\pi(11,6)=5$,
$\HO{11,6,5}=\HL{11,6,5}\neq\emptyset$ if $g=6$ whereas $ \HL{11,g,5}=\emptyset$
and $\HO{11,g,5}\neq\emptyset$ if $g\le 5$. For a general $X\in\HO{11,g,5}$, by the same way as the proof of Theorem \ref{d13}\,(vi),  we have
$$\mu^{-1}\mu(X)=\Jac_{11}(X)\times\overset{o}{\mathbb{G}}(5,11-g)\times\Aut(\PP^5)X; \,2\le g\le 6.$$

\vni Obviously, $h^0(\Ii_X(1))=0$. Since $\rho(11,g,5)>0$ for $g\le 6$, 
the restriction map $H^0(\PP^5, \Oo(t))\to H^0(X, \Oo_X(t))$ is of maximal rank by \cite{l8}
and the Hilbert function is given by \eqref{hil} for all $t\ge 2$; cf. Remark \ref{lars}. 
$X\in \HO{11,g,5}$ is ACM (resp. not ACM) if $g=6$ (resp. if $g\le 5$). 
\end{itemize}
\vspace*{-\baselineskip}
\end{proof}


\subsection{Curves of degree $d\le10$ in $\PP^5$.}

\vni
The following is a descriptive version of elementary part of our discussion so far which we state as a proposition.
\begin{proposition}
Every non-empty $\HO{d,g,5}$ is irreducible of the expected dimension $\Xx(d,g,5)$ and dominates $\Mm_g$ if $d\le 10$. More precisely:
\begin{itemize}
 
\item[\rm{(i)}]  $\HO{10,g,5}\neq \emptyset$ if and only if $g\le 6$. $X\in\HO{10,g,5}$ is
ACM (resp. not ACM) if $g=5, 6$ (resp. if $g\le 4$). 
\[
\mu^{-1}\mu(X)=
\begin{cases}\Aut(\PP^5)X \mathrm{ ~for ~every ~smooth~ } X\in\HO{10,g,5};\, g=6\\
\Jac_{10}(X)\times\overset{o}{\mathbb{G}}(5,10-g)\times\Aut(\PP^5)X; \,2\le g\le 5 \\\mathrm{for~ general~} X\in\HO{10,g,5}, 
\end{cases}
\] 
with the Hilbert function given by \eqref{hil}.
\item[\rm{(ii)}]  For $5\le d\le 9$, $\HO{d,g,5}\neq \emptyset$ if and only if $0\le g\le d-5$.
A general $X\in\HO{d,g,5}$ is
ACM if and only  if $d=g+5$. For a general $X\in\HO{d,g,5}$, 
$$
\mu^{-1}\mu(X)=
\Jac_{d}(X)\times\overset{o}{\mathbb{G}}(5,d-g)\times\Aut(\PP^5)X; \,2\le g\le d-5,
$$
with the Hilbert function  given by \eqref{hil}. 

\end{itemize}
\end{proposition}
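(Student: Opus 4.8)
The plan is to isolate the single extremal pair $(d,g)=(10,6)$ and treat everything else inside the Brill--Noether range. First I would fix the range of $g$. The upper bound is the Castelnuovo bound: a routine computation gives $\pi(10,5)=6$ and $\pi(d,5)=d-5$ for $5\le d\le 9$, so $\HO{d,g,5}$ can be non-empty only for $g\le 6$ (resp. $g\le d-5$). For the lower bound, non-special very ample $g^5_d$'s exist on a general genus-$g$ curve whenever $d\ge g+5$ by Halphen's theorem, while for $g\le 1$ one uses rational (degree $\ge 5$) and elliptic curves directly. Two numerical facts drive the rest: $\rho(d,g,5)=6d-5g-30\ge 0$ on the entire range (it equals $d-5\ge 0$ at $g=d-5$ and $30-5g\ge 0$ for $d=10$), and, crucially, $d\ge 2g-1$ throughout, since $2g-1\le 2(d-5)-1=2d-11\le d$ for $d\le 11$.

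For every pair with $g+5\le d$ I would invoke Remark~\ref{trivial}: its hypotheses $d\ge 2g-7$ and $g+5\le d$ hold, so $\HO{d,g,5}$ is irreducible of the expected dimension $\Xx(d,g,5)$ and the moduli map dominates $\Mm_g$, with $|\Oo_X(1)|$ non-special for general $X$. Because $d\ge 2g-1$, \emph{every} degree-$d$ line bundle on a general $X$ is non-special with $h^0=d+1-g$; hence no special very ample $g^5_d$ can intrude, the complete series of degree $d$ sweep out $\Jac_d(X)$, and the very ample subseries $g^5_d\subset|L|$ of a fixed $|L|$ form $\overset{o}{\mathbb{G}}(5,d-g)$. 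This produces the stated fibre $\mu^{-1}\mu(X)=\Jac_d(X)\times\overset{o}{\mathbb{G}}(5,d-g)\times\Aut(\PP^5)X$ for $2\le g\le d-5$ (and $2\le g\le 5$ when $d=10$). The Hilbert function then follows from the maximal rank theorem \cite{l8}, exactly as recorded in Remark~\ref{lars}, yielding \eqref{hil}.

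The ACM dichotomy I would settle through linear normality. When $d>g+5$ the non-special $|\Oo_X(1)|$ has $h^0=d+1-g>6$, so $X$ is not linearly normal, $h^1(\Ii_X(1))=d-g-5>0$, and $X$ is not ACM; this is $g\le 4$ for $d=10$ and $g<d-5$ for $5\le d\le 9$. When $d=g+5$ the curve is linearly normal and non-special, and the maximal-rank inequalities $td+1-g\le\binom{t+5}{5}$ for all $t\ge 1$ (with equality at $t=1$, and $2d+1-g=d+6\le 21$ at $t=2$) force $h^1(\Ii_X(t))=0$ for all $t$, so $X$ is ACM; this is $g=5$ for $d=10$ and $g=d-5$ for $5\le d\le 9$.

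The only case outside this machinery is $(d,g)=(10,6)$, where $g=\pi(10,5)$. Here I would note that any smooth $X\in\HO{10,6,5}$ is extremal, hence ACM with $h^1(\Oo_X(1))=1$ by Remark~\ref{basic1}; since $\deg\Oo_X(1)=2g-2$ and $h^0(\Oo_X(1))=g$, the divisor $K_X-\Oo_X(1)$ is effective of degree $0$, so $\Oo_X(1)=K_X$. Thus $\HO{10,6,5}$ is precisely the family of canonically embedded non-hyperelliptic genus-$6$ curves: by Max Noether these are projectively normal (hence ACM), the canonical embedding is unique up to $\Aut(\PP^5)$, giving $\mu^{-1}\mu(X)=\Aut(\PP^5)X$ for every smooth $X$, and irreducibility together with dominance of $\Mm_6$ and the dimension equality $\dim\HO{10,6,5}=\dim\Mm_6+\dim\Aut(\PP^5)=\Xx(10,6,5)$ follow from the irreducibility of $\Mm_6$. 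The main obstacle is not any single hard estimate but the uniform bookkeeping across all $(d,g)$ of the boundary between the non-linearly-normal Brill--Noether curves and the linearly-normal (ACM) ones, together with the verification that no special very ample $g^5_d$ disturbs the fibre count; the inequality $d\ge 2g-1$ is exactly what keeps both issues under control.
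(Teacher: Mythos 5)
Your proposal is correct and follows essentially the same route as the paper: Castelnuovo's bound $\pi(10,5)=6$, $\pi(d,5)=d-5$ for the non-emptiness range, Remark~\ref{trivial} for irreducibility and dominance in the Brill--Noether range, the identification of $\HO{10,6,5}$ with canonically embedded non-hyperelliptic genus-$6$ curves (with Max Noether giving ACM and uniqueness of the embedding giving the fibre), and Larson's maximal rank theorem via Remark~\ref{lars} for the Hilbert functions and the ACM dichotomy through linear normality. Your explicit bookkeeping with $d\ge 2g-1$ (ruling out special very ample $g^5_d$'s) and the Riemann--Roch derivation of $\Oo_X(1)\cong K_X$ for $(d,g)=(10,6)$ merely make precise steps the paper records as ``obvious,'' so there is no substantive difference in method.
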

\begin{proof}
\begin{itemize}
\item[(i)]For $d=10$, we have $g\le\pi(d,5)=6$ and $\rho(d,g,5)\ge 0$. By \cite[Theorem, pp. 69--70]{he}, there is a unique component of $\HO{d,g,5}$ dominating $\Mm_g$. Since every $X\in\HO{10,6,5}=\HL{10,6,5}$ is a canonical curve of a non-hyperelliptic curve, $\HO{10,6,5}$ is irreducible and $\mu(\HO{10,6,5})=\Mm_6\setminus\Mm_{6,2}^1$. For $g\le 5$, $\HO{10,g,5}$ irreducible by 
Remark \ref{trivial}; $\HO{10,5,5}=\HL{10,5,5}$ and $\HO{10,g,5}\neq\emptyset ~~\&~~\HL{10,g,5}=\emptyset$ if $g\le 4$. The description of $\mu^{-1}\mu(X)$ is obvious and the Hilbert function is given by \eqref{hil} since $\rho(10,g,5)\ge 0$; Remark \ref{lars}. 
\item[(ii)]  For $5\le d\le 9$, $\pi (d,5)=d-5$ and $|\Oo_X(1)|$ is non-special. Thus $\HO{d,g,5}\neq\emptyset$ if and only if  $0\le g\le d-5$ and is irreducible by Remark \ref{trivial}. \qed
\end{itemize}
\vspace*{-\baselineskip}
\end{proof}
We have the following easy description of the fibre of moduli map of the Hilbert scheme of non-degenerate {\it rational or elliptic curves} of {\it any degree} $d$.
\begin{remark}
\noindent
(i) $g=1$; $\mu^{-1}\mu(X)=\Aut(\PP^5)X\times\overset{o}{\mathbb{G}}(5,d-1).$

\noindent
(ii) $g=0, d\ge 5$; $\mu^{-1}\mu(X)=\Aut(\PP^5)/\Aut(\PP^1)\times\mathbb{G}(5,d)$.
\end{remark}

\begin{remark}
(i) We have seen that for $d\le 13$, a general $X\in\HL{d,g,5}$ is ACM  
as long as $\HL{d,g,5}\neq\emptyset$. This is partly due to the fact that the range of the genera $g\le \pi(d,5)$ such that $\rho(d,g,5)\ge0$ rather big and being ACM follows for free by the maximal rank property of the curve 
in question. 

\vnii
(ii) For $d=14$, a general $X\in\HL{14,g,5}$ is ACM except $g=14$. We remark that a smooth 
curve in $\PP^5$ with $(d,g)=(14,14)$ is nearly extremal, which forces $X$ a non ACM curve by 
\cite[Corollary 3.17]{he} as we have mentioned earlier.

\vnii
(iii) Incidentally, for $d=15$, there is no nearly extremal curve. However, there are components whose 
general member is not ACM, e.g. $\Gamma_i\subset \HO{15,16,5}$, $i=1,2$; cf. Theorem \ref{d=15}\,(iii).

\vnii
(iv) For larger values of $\deg X\ge 16$, we anticipate that there would appear more Hilbert scheme $\HO{d,g,5}$ -- with relatively high genus -- having  non-ACM curves as its general member.  
\end{remark}

\vspace{-18pt}

\end{document}